\newcommand{\ud}[0]{\,\mathrm{d}}
\newcommand{\dist}[0]{\operatorname{dist}}
\newcommand{\PV}{\operatorname{p.v.}}
\newcommand{\abs}[1]{|#1|}
\newcommand{\Babs}[1]{\Big|#1\Big|}
\newcommand{\Norm}[2]{\|#1\|_{#2}}
\newcommand{\ave}[1]{\langle #1\rangle}
\newcommand{\Dini}{\operatorname{Dini}}
\newcommand{\supp}[0]{\operatorname{supp}}
\newcommand{\loc}[0]{\operatorname{loc}}
\newcommand{\R}{\mathbb{R}}
\newcommand{\C}{\mathbb{C}}
\newcommand{\N}{\mathbb{N}}
\newcommand{\Z}{\mathbb{Z}}
\newcommand{\Sp}{\mathbb{S}}
\newcommand{\Qs}{\mathscr{Q}}
\newcommand{\Ss}{\mathscr{S}}
\newcommand{\Bc}{\mathcal{B}}
\newcommand{\Pc}{\mathcal{P}}
\newcommand{\Rc}{\mathcal{R}}
\newcommand{\Tc}{\mathcal{T}}
\newcommand{\D}[0]{\mathscr{D}}
\newcommand{\eps}[0]{\varepsilon}
\newcommand{\A}{\mathcal{A}}
\newcommand{\ch}{\text{ch}}
\theoremstyle{plain}
\newtheorem{theorem}[equation]{Theorem}
\newtheorem{corollary}[equation]{Corollary}
\newtheorem{lemma}[equation]{Lemma}
\newtheorem{conjecture}[equation]{Conjecture}
\theoremstyle{definition}
\newtheorem{remark}[equation]{Remark}
\numberwithin{equation}{section}
\begin{document}

\title[Quantitative weighted estimates]{Quantitative weighted estimates for rough homogeneous singular integrals}

\author[T. P. Hyt\"onen, L. Roncal, and O. Tapiola]{Tuomas P.\ Hyt\"onen, Luz Roncal, and Olli Tapiola}

\address[T. P. Hyt\"onen and O. Tapiola]{Department of Mathematics and Statistics, P.O.B.~68 (Gustaf H\"all\-str\"omin katu~2b), FI-00014 University of Helsinki, Finland}

\email{\{tuomas.hytonen,olli.tapiola\}@helsinki.fi}

\address[L. Roncal]{Departamento de Matem\'aticas y Computaci\'on, Universidad de La Rioja, C/Luis de Ulloa s/n, 26004 Logro\~no, Spain}

\email{luz.roncal@unirioja.es}

\thanks{T.H. and O.T. are supported by the ERC Starting Grant ANPROB. They are members of the Finnish Centre of Excellence in Analysis and Dynamics Research. L.R. is partially supported by grant MTM2012-36732-C03-02 from Spanish Government and the mobility grant ``Jos\'e Castillejo'' number CAS14/00037 from Ministerio de Educaci\'on, Cultura y Deporte of Spain.}

\subjclass[2010]{42B20 (Primary); 42B25 (Secondary)}
\keywords{}

\maketitle

\begin{abstract}
We consider homogeneous singular kernels, whose angular part is bounded, but need not have any continuity. For the norm of the corresponding singular integral operators on the weighted space $L^2(w)$, we obtain a bound that is quadratic in the $A_2$ constant $[w]_{A_2}$. We do not know if this is sharp, but it is the best known quantitative result for this class of operators. The proof relies on a classical decomposition of these operators into smooth pieces, for which we use a quantitative elaboration of Lacey's dyadic decomposition of Dini-continuous operators: the dependence of constants on the Dini norm of the kernels is crucial to control the summability of the series expansion of the rough operator. We conclude with applications and conjectures related to weighted bounds for powers of the Beurling transform.
\end{abstract}

\section{Introduction and main results}

We are concerned with sharp weighted inequalities for singular integral operators, a topic that goes back to \cite{AIS,Pet-Vol} in the case of the Beurling operator, continues through the solution of the $A_2$ conjecture for all standard Calder\'on--Zygmund operators  \cite{Hytonen:A2} and the alternative approach to this result by A. K. Lerner \cite{Ler1,Ler2}, and keeps developing with new extensions, among them the recent approach of M. T. Lacey \cite{Lacey} covering all Dini-continuous kernels. For a more precise discussion of the background and our contributions, we need to recall some definitions:

Let $T$ be a bounded linear operator on $L^2(\R^d)$ represented as
$$
Tf(x)=\int_{\R^d}K(x,y)f(y)\,dy, \quad \forall x\notin \supp f.
$$
A function $\omega \colon [0, \infty) \to [0,\infty)$ is a \textit{modulus of continuity} if it is increasing and subadditive
(i.e. $\omega(t + s) \le \omega(t) + \omega(s)$) and $\omega(0)=0$. We say that the operator $T$ above is an \emph{$\omega$-Calder\'on--Zygmund operator}
if the kernel $K$ has the standard size estimate
\begin{equation}
\label{eq:CK}
|K(x,y)|\le \frac{C_K}{|x-y|^d}
\end{equation}
and the smoothness estimate
$$
|K(x,y)-K(x',y)|+|K(y,x)-K(y,x')|\le \omega\bigg(\frac{|x-x'|}{|x-y|}\bigg)\frac{1}{|x-y|^d}
$$
for $|x-y|>2|x-x'|>0$.
(We deliberately leave out any multiplicative constant from the smoothness estimate, as this can be incorporated into the function $\omega$.)
Moreover, $K$ is said to be a \textit{Dini-continuous kernel} if $\omega$ satisfies the \textit{Dini condition}:
\begin{equation}
\label{eq:Dini}
\|\omega\|_{\Dini}:=\int_0^1\omega(t)\frac{dt}{t}<\infty.
\end{equation}

Let us denote the average of a function $f$ over a cube $Q$ by
$$
  \ave{f}_Q = \fint_Q f = \fint_Q f(x)\,dx =\frac{1}{|Q|}\int_Qf(x)\,dx.
$$
Here $|Q|$ is the Lebesgue measure of $Q$.
A weight is a nonnegative and finite almost everywhere function on $\R^d$. For $1<p<\infty$, the Muckenhoupt class $A_p$ is the set of locally integrable weights $w$ for which $w^{1-p'}\in L^1_{\loc}(\R^d)$, with $1/p+1/p'=1$, and
\begin{equation*}
[w]_{A_p}:=\sup_Q\Big(\fint_Q w \Big)\Big(\fint_Q w^{1-p'}\Big)^{p-1}<\infty,
\end{equation*}
where the supremum is taken over all cubes in $\R^d$.
We will adopt the following definition for the $A_{\infty}$ constant of a weight $w$ introduced by N. Fujii \cite{Fujii}, and later by J.M. Wilson \cite{Wilson}:
$$
[w]_{A_{\infty}}:= \sup_Q\frac{1}{w(Q)}\int_{Q}M(1_Q w)(x)\,dx.
$$
Here, $w(Q):=\int_Qw(x)\,dx$, $1_Qw(x)=w(x)1_Q(x)$, where $1_Q$ is the characteristic function of $Q$, and the supremum above is taken over all cubes with edges parallel to the coordinate axes. When the supremum is finite, we will say that $w$ belongs to the $A_{\infty}$ class.

Our weighted estimates are most efficiently stated in terms of the following variants of the weight characteristic:
\begin{align*}
    \{w\}_{A_p} &:=[w]_{A_p}^{1/p}\max\{[w]_{A_\infty}^{1/p'},[w^{1-p'}]_{A_{\infty}}^{1/p}\}, \\
    (w)_{A_p} &:=\max\{[w]_{A_\infty},[w^{1-p'}]_{A_\infty}\}.
\end{align*}
Using the facts that $[w^{1-p'}]_{A_{p'}}^{1/p'}=[w]_{A_p}^{1/p}$ and $[w]_{A_\infty}\leq c_d[w]_{A_p}$ (for the latter, see \cite[last display on p.~778]{HP}), one easily checks that
\begin{equation*}
  (w)_{A_p}\leq c_d\{w\}_{A_p}\leq c_d'[w]_{A_p}^{\max\{1,1/(p-1)\}},
\end{equation*}
so any bounds in terms of the weight characteristics $(w)_{A_p}$ and $\{w\}_{A_p}$ are stronger than similar bounds using $[w]_{A_p}^{\max\{1,1/(p-1)\}}$, which is the expression that most frequently appears in sharp estimates for Calder\'on--Zygmund operators, like \cite{Hytonen:A2}. While our notation is non-standard, we note that the characteristics $(w)_{A_p}$ and $\{w\}_{A_p}$ have implicitly appeared in several recent contributions, starting from \cite{HP,Lacey:Houston}.

Given a Calder\'on--Zygmund operator $T$, the \emph{maximal truncation of $T$} is the operator $T_{\sharp}$ given by
\begin{equation*}
  T_{\sharp}f(x):=\sup_{\eps>0}\Big|T_\eps f(x)\Big|.
\end{equation*}
where $T_\eps$ is the \emph{$\eps$-truncation of $T$}:
$$
T_{\eps}f(x):=\int_{|x-y|>\eps}K(x,y)f(y)\,dy.
$$

Our first main result in contained in the following. It is a fully quantitative version of a recent theorem of Lacey~\cite{Lacey}, which in turn is an extension of the $A_2$ theorem of the first author~\cite{Hytonen:A2}.

\begin{theorem}
\label{thm:main1}
Let $T$ be an $\omega$-Calder\'on--Zygmund operator whose modulus of continuity satisfies the Dini condition \eqref{eq:Dini}. Let $1<p<\infty$. Then, for every $w\in A_p$, we have
$$
\|T_{\sharp}\|_{L^p(w)\to L^p(w)}\le c_{d,p}\big(\|T\|_{L^2\to L^2}+C_K+\|\omega\|_{\Dini}\big) \{w\}_{A_p}.
$$
In particular,
$$
\|T_{\sharp}\|_{L^2(w)\to L^2(w)}\le c_d\big(\|T\|_{L^2\to L^2}+C_K+\|\omega\|_{\Dini}\big) [w]_{A_2}.
$$
\end{theorem}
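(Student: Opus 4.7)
I would prove Theorem \ref{thm:main1} via the two-step recipe that has become standard in the area: first, establish a \emph{quantitative sparse domination} of $T_\sharp$ with constant linear in $\|T\|_{L^2\to L^2}+C_K+\|\omega\|_{\Dini}$; then invoke the sharp weighted bound for sparse operators in terms of the characteristic $\{w\}_{A_p}$.

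For step one, I would adapt Lacey's dyadic stopping-time construction from \cite{Lacey}, tracking constants with care. The target is: for each pair of compactly supported bounded functions $f,g$, there exists a sparse family $\Ss$ of dyadic cubes such that
\begin{equation*}
  \bigl|\pair{T_\sharp f}{g}\bigr|\le c_d\bigl(\|T\|_{L^2\to L^2}+C_K+\|\omega\|_{\Dini}\bigr)\sum_{Q\in\Ss}\ave{|f|}_Q\ave{|g|}_Q|Q|.
\end{equation*}
Starting from a fixed cube $Q_0\supset\supp f$, I would select stopping children as the maximal subcubes of $Q_0$ where either $\ave{|f|}$, $\ave{|g|}$, or a suitably localized variant of $T_\sharp(f 1_{Q_0})$ exceeds a large multiple of the corresponding quantity on $Q_0$. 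The standard $\sum|Q'|\le\tfrac12|Q_0|$ bookkeeping gives sparseness, and the residual contribution of $Q_0\setminus\bigcup Q'$ is then bounded by $c_d(\|T\|_{L^2\to L^2}+C_K+\|\omega\|_{\Dini})\ave{|f|}_{Q_0}\ave{|g|}_{Q_0}|Q_0|$; iterating yields the sparse form. The engine is a weak-type $(1,1)$ estimate for the relevant grand maximal truncation operator, obtained by a Calder\'on--Zygmund decomposition: the size estimate \eqref{eq:CK} together with the $L^2$ bound contribute $\|T\|_{L^2\to L^2}+C_K$, while a single integration of the smoothness estimate against the good part produces the linear factor $\|\omega\|_{\Dini}$.

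For step two, I would invoke the now well-known weighted bound
\begin{equation*}
  \BNorm{\sum_{Q\in\Ss}\ave{|f|}_Q 1_Q}{L^p(w)}\le c_{d,p}\{w\}_{A_p}\Norm{f}{L^p(w)},
\end{equation*}
essentially from \cite{HP,Lacey:Houston}, together with duality, to derive the $L^p(w)$ bound for $T_\sharp$ from the sparse domination. The second display follows by specializing to $p=2$ and using the elementary comparison $\{w\}_{A_2}\le c_d[w]_{A_2}$.

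The principal obstacle lies entirely in step one. Lacey's original proof yields a sparse domination with some unspecified constant depending on the Dini norm, but here we must show that this constant is \emph{linear} in $\|\omega\|_{\Dini}$. This forces every appeal to the smoothness condition inside the Calder\'on--Zygmund decomposition to be traded for exactly one factor of $\|\omega\|_{\Dini}$, with all remaining constants independent of $\omega$. The linearity is not a cosmetic improvement: as the abstract indicates, the intended downstream application expands a rough singular integral as a series of Dini-continuous pieces, and any superlinear dependence on the Dini norm of the pieces would destroy the summability of that series.
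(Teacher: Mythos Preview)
Your two-step recipe---quantitative sparse domination followed by the known sharp weighted bound for sparse operators---is exactly the route the paper takes, and you have correctly isolated the only real difficulty: extracting a constant \emph{linear} in $\|\omega\|_{\Dini}$ from the sparse domination step, rather than the implicit dependence in Lacey's original argument.

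A few differences in execution are worth noting. The paper proves a \emph{pointwise} sparse domination (Theorem~\ref{thm:domination}) rather than the bilinear form you describe, following Lacey's localized-maximal-truncation framework and a recursion lemma (Lemma~\ref{lem:recursion}). More importantly, the paper is explicit about the ``new twist'' you leave unspecified: Lacey's original argument obtains smallness of the stopping set by choosing $\rho$ so that $\int_0^\rho\omega(t)\,dt/t$ is small, which introduces nonlinear dependence on $\omega$. The paper avoids this by proving an oscillation estimate for the truncations (Lemma~\ref{lem:truncatedMO}),
\[
  |T_{\eps,\delta}f(x)-T_{\eps,\delta}f(x')|\le c_d(C_K+\|\omega\|_{\Dini})\,M^c_{\eps,2\delta}f(x),
\]
and then splitting the exceptional set $E_0$ into a part $E_1$ where the truncated maximal function is controlled (so the oscillation bound propagates the level-set condition to a neighborhood, feeding into the weak $(1,1)$ bound for $T_\sharp$) and a part $E_2$ handled directly by the maximal function weak type. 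Your sketch gestures at a weak-type bound for a ``grand maximal truncation operator'' as the engine, which is morally the same device, but you should be aware that simply tracking constants in Lacey's proof does \emph{not} suffice---the $E_1/E_2$ splitting (or an equivalent mechanism) is the actual content.
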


Lacey proves such a result under the same assumptions on the operator~$T$, but without specifying the dependence of the norm bound on the Calder\'on--Zygmund characteristics of $T$. For us, the precise form of this dependence will be important in the application to our second main result. It should be noted that obtaining the stated dependence, especially on $\|\omega\|_{\Dini}$, is not just a question of keeping careful track of constants in Lacey's proof, but requires some new twist in the argument: Lacey's proof relies on the possibility of making $\int_0^\rho\omega(t)\,dt/t$ small by choosing $\rho$ small enough, and in this way it introduces a more complicated implicit dependence on the function $\omega$.

We now recall the notion of \textit{rough homogeneous singular integrals}. These are operators with convolution kernels $K(x,y)=K(x-y)$ where, writing $x'=x/|x|$,
$$
K(x)=\frac{\Omega(x')}{|x|^d}, \qquad \Omega\in L^{\infty}(\Sp^{d-1}),
$$
and
\begin{equation*}
\int_{\Sp^{d-1}}\Omega\,d\sigma=0.
\end{equation*}
Observe that the kernel so defined is homogeneous of degree $-d$.
The size estimate is as usual, but there is no angular smoothness. Then, we will write
$$
T_{\Omega}f(x)=\PV\int_{\R^d}\frac{\Omega(y')}{|y|^d}f(x-y)\,dy=
\lim_{\substack{\eps\to 0\\R\to\infty}}\int_{\eps<|y|<R}\frac{\Omega(y')}{|y|^d}f(x-y)\,dy.
$$
Our second aim is to prove the following.

\begin{theorem}
\label{thm:main2}
For every $w\in A_p$, we have
\begin{equation*}
\|T_{\Omega}\|_{L^p(w)\to L^p(w)}
\le  c_{d,p}\|\Omega\|_{L^\infty}  \{w\}_{A_p} (w)_{A_p}.
\end{equation*}
In particular,
\begin{equation*}
\|T_{\Omega}\|_{L^2(w)\to L^2(w)}
\le  c_d\|\Omega\|_{L^\infty}  [w]_{A_2}^2.
\end{equation*}
\end{theorem}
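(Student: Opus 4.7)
The plan is to reduce Theorem~\ref{thm:main2} to Theorem~\ref{thm:main1} via the classical Duoandikoetxea--Rubio de Francia decomposition of the rough kernel into smooth Calder\'on--Zygmund pieces. Performing a smooth dyadic annular decomposition of $K$ and mollifying each annular piece by convolution with a bump at a scale $2^{-k}$ relative to the annular scale, I would write $T_\Omega = \sum_{k \ge 0} T_k$, where each $T_k$ is an $\omega_k$-Calder\'on--Zygmund operator with quantitative parameters
$$
C_{K_k} \lesssim \|\Omega\|_{L^\infty},\qquad \|\omega_k\|_{\Dini} \lesssim (1+k)\|\Omega\|_{L^\infty},
$$
and, crucially, with geometric $L^2$ decay
$$
\|T_k\|_{L^2 \to L^2} \lesssim 2^{-\alpha k}\|\Omega\|_{L^\infty}, \qquad \alpha = \alpha(d) > 0,
$$
as a consequence of the cancellation $\int_{\Sp^{d-1}}\Omega = 0$ together with the mollification. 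Theorem~\ref{thm:main1} applied to each $T_k$ then gives the piecewise weighted estimate
$$
\|T_k\|_{L^p(w) \to L^p(w)} \lesssim (1+k)\|\Omega\|_{L^\infty}\{w\}_{A_p}.
$$

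Since $\sum_k (1+k)$ diverges, the second step is to trade this linear growth against the geometric $L^2$ decay via interpolation. The mechanism is to apply Theorem~\ref{thm:main1} with a power weight $W = w^r$ and then interpolate back to $L^p(w) = L^p(W^{1/r})$ against the unweighted $L^2$ bound. The sharp reverse H\"older inequality, whose exponent is governed by $(w)_{A_p}$, guarantees that $W \in A_p$ with $\{W\}_{A_p} \lesssim \{w\}_{A_p}^r$ provided $r - 1 \lesssim 1/(w)_{A_p}$. The resulting Stein--Weiss interpolation then produces, for such $r$,
$$
\|T_k\|_{L^p(w) \to L^p(w)} \lesssim 2^{-\alpha(1-1/r)k}(1+k)^{1/r}\|\Omega\|_{L^\infty}\{w\}_{A_p}.
$$
Choosing $r$ at the edge of admissibility, so that $1 - 1/r \sim 1/(w)_{A_p}$, makes the geometric series in $k$ sum to $(w)_{A_p}$ (absorbing logarithmic factors), yielding the desired
$$
\|T_\Omega\|_{L^p(w) \to L^p(w)} \lesssim \|\Omega\|_{L^\infty}\{w\}_{A_p}(w)_{A_p}.
$$
The specialization to $p = 2$ follows from the elementary inequalities among the weight characteristics recorded after their definitions.

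The main obstacle is the interpolation step: one must simultaneously handle the growth in the Dini constant, the decay in the $L^2$ norm, and the change of weight characteristic under $w \mapsto w^r$, all sharply enough that the final estimate picks up only a single extra factor of $(w)_{A_p}$ rather than a higher power. This is the origin of the quadratic (rather than linear) dependence $[w]_{A_2}^2$ in the case $p = 2$. The quantitative dependence on $\|\omega\|_{\Dini}$ in Theorem~\ref{thm:main1} is indispensable here; replacing it with an implicit dependence, as in Lacey's original formulation, would not allow the series to be summed.
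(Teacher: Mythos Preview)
Your overall strategy---Duoandikoetxea--Rubio de Francia decomposition, Theorem~\ref{thm:main1} on each piece, sharp reverse H\"older to pass to $w^r$, and Stein--Weiss interpolation back to $w$---is exactly the paper's. But there is a genuine quantitative gap in the summation step. With $\gamma:=\alpha(1-1/r)\sim 1/(w)_{A_p}$ and $1/r\to 1$ as $(w)_{A_p}\to\infty$, your series behaves like $\sum_{k\ge 0}(1+k)^{1/r}e^{-c\gamma k}\sim\sum_{k\ge 0}(1+k)e^{-c\gamma k}\sim\gamma^{-2}\sim(w)_{A_p}^2$, not $(w)_{A_p}$ as you assert. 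Your argument therefore only delivers $\|\Omega\|_{L^\infty}\{w\}_{A_p}(w)_{A_p}^2$, i.e.\ $[w]_{A_2}^3$ for $p=2$. The paper repairs this by \emph{regrouping} the pieces into blocks $\widetilde T_j^N:=\sum_{N(j-1)<i\le N(j)}\widetilde T_i$ with $N(j)=2^j$. Each block still has kernel constant and Dini norm of order $(1+N(j))\|\Omega\|_{L^\infty}$, but its unweighted $L^2$ norm decays like $2^{-\alpha N(j-1)}$, which is doubly exponential in $j$. After the same interpolation the series becomes $\sum_{j\ge 0}(1+2^j)2^{-\gamma 2^{j-1}}\sim 1/\gamma\sim(w)_{A_p}$, yielding the stated bound. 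The paper explicitly remarks that the na\"ive choice $N(j)=j$---effectively your proposal---costs the extra factor of $(w)_{A_p}$.

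A second, smaller gap: for $p\neq 2$, Stein--Weiss interpolation between unweighted $L^2$ and $L^p(w^r)$ does not land in $L^p(w)$, since the exponents differ. The paper first interpolates the $L^2$ decay against the unweighted $L^{2p}$ (or $L^{2p/(p+1)}$) bound coming from Theorem~\ref{thm:main1} with $w\equiv 1$, to obtain an unweighted $L^p$ estimate with geometric decay; the final Stein--Weiss step is then between $L^p$ and $L^p(w^{1+\eps})$, both at exponent~$p$.
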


Qualitatively, without specifying the dependence of the norm bound on the $A_2$ characteristics of $w$, this result is well known \cite{DuoTAMS, Duo-RdF, Watson}. The question of sharp dependence on $[w]_{A_2}$ for $T_\Omega$ was raised during the workshop ``Weighted singular integral operators and non-homogenous harmonic analysis'' at the American Institute of Mathematics (Palo Alto, California) in October 2011, and the first author discussed this issue especially with David Cruz-Uribe. Tracking the dependence in one of the classical proofs of a qualitative form of Theorem~\ref{thm:main2}, we arrived at a somewhat higher power on $[w]_{A_2}$ back then. We do not know if the bound above is optimal but, to our knowledge, it is the best that is currently available.

\subsection*{Notation} By $c_d$ we mean a positive dimensional constant. Also, the positive constants not depending on the essential variables will be denoted by $C$. Both $C$ and $c_d$ may vary at each occurrence. For $x\in \R^d$, $r>0$, the ball of center $x$ and radius $r$ is the set $B(x,r):=\{y\in \R^d: |x-y|<r\}$. For an operator $T$, $\|T\|_{B_1\to B_2}$ is the operator norm, that is, the smallest $N$ in the inequality $\|Tf\|_{B_2}\le N\|f\|_{B_1}$. Sometimes we will use the notation $a \vee b \coloneqq \max\{a,b\}$.
Finally, given a function $f$, by $\widehat{f}$ we will denote the Fourier transform of $f$.

\section{Calder\'on--Zygmund operators with Dini-continuous kernel}

Recently, Lacey \cite[Theorem 4.2]{Lacey} extended the $A_2$ theorem to a more general class of Calder\'on--Zygmund operators, whose modulus of continuity $\omega$ satisfies the Dini condition \eqref{eq:Dini}. (Very recently, his method has been pushed even further in \cite{BFP}, but this extension goes to a different direction than our present needs.)
For such operators, Lacey proved a pointwise domination theorem by so-called sparse operators, which originate from the approach to the $A_2$ theorem due to Lerner \cite{Ler1,Ler2}.

However, Lacey's result was qualitative in the sense that the constants arising were not fully explicit in terms of $\omega$. In this section, we revisit Lacey's results, and show the precise
quantitative dependence on the Dini condition in the pointwise domination result. As a consequence, we will obtain Theorem \ref{thm:main1} as a corollary.

\subsection{Dyadic cubes, adjacent dyadic systems and sparse operators}

We begin with some necessary definitions. The \emph{standard system of dyadic cubes} in $\R^d$ is the collection $\D$,
$$
\mathscr{D}:=\big\{2^{-k}([0,1)^d+m):k\in \Z, m\in \Z^d\big\},
$$
consisting of simple half-open cubes of different length scales with sides parallel to the coordinate axes. These cubes satisfy the following three important properties:
\begin{enumerate}
\item[1)] for any $Q\in\mathscr{D}$, the sidelength $\ell (Q)$ is of the form $2^k$, $k\in \Z$;
\item[2)] $Q\cap R\in\{Q,R,\emptyset\}$, for any $Q,R\in\mathscr{D}$;
\item[3)] the cubes of a fixed sidelength $2^k$ form a partition of $\R^d$.
\end{enumerate}

Although the standard system of dyadic cubes is a versatile tool in mathematical analysis, it does have some disadvantages. Namely, if $B(x,r)$ is a ball, then
there does not usually exist a cube $Q \in \mathscr{D}$ such that $B(x,r) \subset Q$ and $\ell(Q) \approx r$. In many situations, a bounded number of
\emph{adjacent dyadic systems} $\D^\alpha$,
$$
\mathscr{D}^\alpha \coloneqq \big\{2^{-k}([0,1)^d+m+(-1)^k \tfrac{1}{3}\alpha):k\in \Z, m\in \Z^d\big\}, \quad \alpha\in \{0,1,2\}^d,
$$
can be used to overcome this problem:

\begin{lemma}[See {\cite[Lemma 2.5]{HLP}}]
\label{lem:aprox}
For any ball $B \coloneqq B(x,r) \subset \R^d$, there exists a cube $Q_B \in \mathscr{D}^\alpha$ for some $\alpha\in \{0,1,2\}^d$ such that $B \subset Q_B$ and $6r < \ell(Q_B)\le 12r$.
\end{lemma}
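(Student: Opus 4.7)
The plan is to fix the sidelength first and then hunt for the appropriate shift. Choose the unique integer $k \in \Z$ satisfying $6r < 2^{-k} \le 12r$ and set $\ell := 2^{-k}$; this is possible because the interval $(-\log_2(12r), -\log_2(6r)]$ has length $1$. For each $\alpha \in \{0,1,2\}^d$, the cubes of $\mathscr{D}^\alpha$ of sidelength $\ell$ partition $\R^d$, so there is a unique such cube $Q^\alpha \in \mathscr{D}^\alpha$ containing $x$. Because $Q^\alpha$ is axis-parallel, the inclusion $B(x,r) \subset Q^\alpha$ is equivalent to a coordinatewise condition: for each $i \in \{1,\dots,d\}$, the distance from $x_i$ to the $i$-th pair of face hyperplanes, located along the one-dimensional lattice $\ell\bigl(\Z + (-1)^k \tfrac{\alpha_i}{3}\bigr)$, must be at least $r$.

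The remaining task is a one-dimensional selection: for every $t \in \R$, find $a \in \{0,1,2\}$ so that the distance from $t$ to $\ell\bigl(\Z + (-1)^k \tfrac{a}{3}\bigr)$ is at least $\ell/3$, which will be $>2r>r$ thanks to $\ell>6r$. I would pass to the quotient $\R/\ell\Z$, a circle of circumference $\ell$, in which the three lattices (indexed by $a\in\{0,1,2\}$) collapse to three equispaced points $p_0,p_1,p_2$ at mutual cyclic distance $\ell/3$. The antipodal point $t+\ell/2$ lies in one of the three arcs between consecutive $p_a$'s, each of length $\ell/3$, so some endpoint $p_a$ of that arc is within $\ell/6$ of $t+\ell/2$; equivalently, that $p_a$ is at cyclic distance at least $\ell/2-\ell/6=\ell/3$ from $t$, yielding the desired shift.

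Assembling the componentwise choices into $\alpha=(\alpha_1,\dots,\alpha_d)\in\{0,1,2\}^d$, the cube $Q_B:=Q^\alpha\in\mathscr{D}^\alpha$ contains $B$ and has sidelength $\ell\in(6r,12r]$, as required. The only genuinely nontrivial point is the elementary geometric observation about three equispaced shifts on a circle; the rest is bookkeeping, and the factor-of-two gap in the sidelength bound is generous (the same argument in fact permits the tighter range $\ell>3r$, but the stated dyadic window is more convenient for later use).
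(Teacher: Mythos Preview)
Your argument is correct. The paper does not supply its own proof of this lemma but merely quotes it from \cite[Lemma~2.5]{HLP}, so there is no in-house argument to compare against; your self-contained verification via the one-dimensional ``three equispaced shifts on a circle'' observation is exactly the standard reasoning behind the one-third trick and would serve well as a replacement for the bare citation.
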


We note that \cite[Lemma 2.5]{HLP} is actually a stronger lemma than Lemma \ref{lem:aprox} above but for clarity, we use this formulation.

In the light of Lemma \ref{lem:aprox}, the collection $\D_0 \coloneqq \bigcup_{\alpha \in \{0,1,2\}^d} \D^\alpha$ can be seen as a countable approximation
of the collection of all balls in $\R^d$. It still satisfies essentially the properties 1) and 3) that we listed earlier but it satisfies the property 2) only
in various weaker forms. We slightly abuse the common terminology and say that $Q$ is a \emph{dyadic cube} if $Q \in \D_0$.

The adjacent dyadic systems $\D^\alpha$ satisfy also the following property, which will be useful for us later in this section.

\begin{lemma}
  \label{lem:dyadicCoveringMod}
  If $Q_0 \in \D_0$, then for any ball $B \coloneqq B(x,r) \subset Q_0$ there exists a cube $Q_B \in \D_0$
  such that $B \subset Q_B \subseteq Q_0$ and $\ell(Q_B) \le 12r$.
\end{lemma}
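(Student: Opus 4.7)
The plan is to split on the relative size of $\ell(Q_0)$ and $r$. If $\ell(Q_0) \le 12 r$, I take $Q_B := Q_0$, which trivially satisfies the three requirements. So from now on assume $\ell(Q_0) > 12 r$.

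In this nontrivial case, I first reduce to the model cube $Q_0 = [0,1)^d \in \D^0$ via a $\D_0$-preserving affine rescaling, and then construct $Q_B$ inside $[0,1)^d$ by a direction-wise adaptation of Lemma~\ref{lem:aprox}. For the reduction, write $Q_0 \in \D^\beta$ at level $k_0$, so $Q_0 = a + 2^{-k_0}[0,1)^d$ for some lower corner $a$, and consider the affine map $\phi(y) := a + 2^{-k_0} y$. I claim that $\phi$ sends $\D_0$ bijectively onto itself: for any $Q \in \D^\alpha$ at level $k$, a direct computation using $2^k \equiv (-1)^k \pmod{3}$ shows that $\phi(Q) \in \D^\gamma$ at level $k_0 + k$, where $\gamma \in \{0,1,2\}^d$ is the unique vector with $\gamma \equiv \beta + (-1)^{k_0}\alpha \pmod{3}$ componentwise. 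Via $\phi$ and $\phi^{-1}$, the original problem is equivalent to finding, for the rescaled ball $B' := \phi^{-1}(B)$ of radius $r' := r/\ell(Q_0) < 1/12$, a cube $Q_B' \in \D_0$ with $B' \subset Q_B' \subseteq [0,1)^d$ and $\ell(Q_B') \le 12 r'$.

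Now with $Q_0 = [0,1)^d$ and $B = B(x,r)$, pick the unique integer $k \ge 1$ with $6r < 2^{-k} \le 12r$. For each coordinate $i$, set $\alpha_i := 0$ if the projection $[x_i - r, x_i + r]$ of $B$ onto the $i$-th axis fits inside some single $\D^0$-interval at scale $2^{-k}$, and otherwise set $\alpha_i := 1$. In the second case, $B$ straddles a midline $x_i = j \cdot 2^{-k}$ with $|x_i - j \cdot 2^{-k}| < r$; but the midlines of $\D^\alpha$ in direction $i$ (with $\alpha_i = 1$) are shifted by $\pm 2^{-k}/3 > 2r$ from the $\D^0$-midlines, so the distance from $x_i$ to the nearest such shifted midline is at least $2^{-k}/3 - r > r$, and $B$'s $i$-th projection again fits inside a $\D^\alpha$-interval. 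Taking the product over directions produces a cube $Q_B \in \D^\alpha$ at scale $2^{-k}$ containing $B$, with $\ell(Q_B) \le 12r$.

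The final verification is $Q_B \subseteq [0,1)^d$, carried out coordinate by coordinate using that $B \subset [0,1)^d$ forces $x_i \in [r, 1-r]$ for every $i$. In directions with $\alpha_i = 0$, the $\D^0$-interval $[j \cdot 2^{-k}, (j+1) \cdot 2^{-k})$ containing the $i$-th projection of $B$ has $0 \le j \le 2^k - 1$, so it sits inside $[0,1)$. In directions with $\alpha_i = 1$, the straddled midline $j \cdot 2^{-k}$ is pinned strictly inside $(0,1)$, forcing $1 \le j \le 2^k - 1$, and the corresponding shifted interval (of the form $[(j - 2/3) \cdot 2^{-k}, (j + 1/3) \cdot 2^{-k})$ or $[(j - 1/3) \cdot 2^{-k}, (j + 2/3) \cdot 2^{-k})$ depending on the parity of $k$) likewise stays inside $[0,1)$. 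The main obstacle is the affine reduction: although the $1/3$-shift structure of the adjacent dyadic systems makes the invariance of $\D_0$ under the rescaling $\phi$ plausible, its verification is a somewhat delicate congruence computation resting on $2^k \equiv (-1)^k \pmod{3}$; once this is in hand, the construction inside $[0,1)^d$ is essentially a careful execution of the proof of Lemma~\ref{lem:aprox}.
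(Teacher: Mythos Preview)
Your proof is correct and follows essentially the same strategy as the paper's: fix the appropriate scale $2^{-k}\ell(Q_0)\in(6r,12r]$, and in each coordinate direction either use the unshifted dyadic interval or shift by one third of its length to cover the projection of $B$, then verify containment in $Q_0$. The paper works directly with the dyadic descendants of $Q_0$ (which lie in the same $\D^\beta$ as $Q_0$ and are automatically inside $Q_0$) and shifts a descendant by $\ell(I)/3$ when $B$ spills over an endpoint, checking that the shifted interval $I_s$ stays inside $Q_0$ because $I_s\subset I\cup J$ for the neighboring descendant $J$. Your version inserts an additional normalization step, reducing to $Q_0=[0,1)^d$ via the affine map $\phi$ and verifying that $\phi$ preserves $\D_0$ through the congruence $2^k\equiv(-1)^k\pmod 3$; this is correct but somewhat more roundabout, since the paper's direct use of descendants of $Q_0$ bypasses the need to establish this affine invariance of $\D_0$.
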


\begin{proof}
  We will only detail the proof for $d = 1$. The general case follows by considering the cube that contains the ball, and repeating the one-dimensional consideration for each of its side intervals in every coordinate direction.

  We may assume that $r < \frac{1}{12}\ell{(Q_0)}$ since otherwise we can simply choose $Q_B = Q_0$. Let $k \ge 1$ be the unique integer such that $6r < 2^{-k} \ell(Q_0) \le 12r$
  and let us look at the dyadic descendants of $Q_0$ of side length $2^{-k} \ell(Q_0)$. Since $B \subset Q_0$, we know that there exists at least one such descendant interval $I$ that $B \cap I \neq \emptyset$.
  If $B \subset I$, we can simply choose $Q_B = I$. Thus, we may assume that $B \not\subset I$.

  Since $2r < \frac{1}{3} \ell(I)$, the ball $B$ can only intersect the left or right third of the interval $I$. By symmetry, we can assume that
  the ball $B$ intersects the left third of the interval $I$. Then, by shifting the interval $I$ one third of its length to
  left, we can cover the ball $B$. Let $I_s$ be this shifted interval.
  By the definition of the collections $\D^\alpha$, we know that $I_s \in \D_0$. Since $B \subset Q_0$ and $B \not\subset I$, we know that
  there exists another dyadic descendant $J$ of $Q_0$ of side length $\ell(I)$ on the left side of $I$. Then, $I_s \subset I \cup J \subset Q_0$. In particular,
  $B \subset I_s \subset Q_0$ and we can set $Q_B = I_s$.
\end{proof}

For Lacey's domination theorem, the notion of sparse operators is crucial. Let $\mathscr{S}^\alpha \subset \D^\alpha$. Then we say that the operator
$\A_{\mathscr{S}^\alpha}$ is \emph{sparse} if
$$
  \A_{\mathscr{S}^\alpha} f(x) = \sum_{Q\in \mathscr{S}^{\alpha}}1_Q \ave{|f|}_Q
$$
and the collection $\mathscr{S}^\alpha$ satisfies the \emph{sparseness condition}: for each $Q\in \mathscr{S}^{\alpha}$ we have
$$
\Big|\bigcup_{\substack{Q'\in \mathscr{S}^{\alpha}\\Q'\subsetneq Q}}Q'\Big|\le \frac12|Q|.
$$
The sparseness condition is equivalent with a suitable \emph{Carleson condition}, see \cite[Section 6]{Ler-Naz}. We will use the notation
$\A_{\Ss}$ also for other types of collections $\Ss \subset \D_0$.

\subsection{Localized maximal truncations and truncated maximal operators}

Let $T$ be a Calder\'on-Zygmund operator with Dini-continuous kernel. For every cube $P\subset \R^d$, we define the \emph{$P$-localized maximal truncation of $T$} as the
operator $T_{\sharp,P}$,
$$
T_{\sharp,P}f(x):=\sup_{0<\eps<\delta< \frac{1}{2} \cdot \dist(x,\partial P)}\big|T_{\eps,\delta}f(x)\big| 1_P(x),
$$
where $T_{\eps,\delta}$ is another truncation operator given by
$$
T_{\eps,\delta}f(x):=\int_{\eps<|x-y|<\delta}K(x,y)f(y)\,dy.
$$
As an auxiliary operator, we also need the truncated centered Hardy-Littlewood maximal operator $M^c_{\eps,\delta}$,
$$
  M^c_{\eps,\delta}f(x) \coloneqq \sup_{\eps < r < \delta} \ave{|f|}_{B(x,r)}.
$$
The connection between the truncations $T_{\eps,\delta}$ and the maximal operator $M_{\eps,\delta}^c$ is formulated in the next lemma.

\begin{lemma}\label{lem:truncatedMO}
  Suppose that $|x - x'| < \frac{1}{2}\eps$. Then
  $$
    \left| T_{\eps, \delta} f(x) - T_{\eps, \delta}f(x') \right| \le c_d \left( C_K + \| \omega \|_{\Dini} \right) M^c_{\eps, 2\delta}f(x).
  $$
\end{lemma}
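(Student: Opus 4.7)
The plan is to split the difference into a main piece where the cancellation encoded in the kernel smoothness takes effect, and two boundary error terms where only the size estimate \eqref{eq:CK} is available. Writing $A_z:=\{y:\eps<|z-y|<\delta\}$ and assuming $\eps<\delta$ (else the lemma is trivial), one has
$$
T_{\eps,\delta}f(x)-T_{\eps,\delta}f(x') = \int_{A_x\cap A_{x'}}[K(x,y)-K(x',y)]f(y)\,dy + \int_{A_x\setminus A_{x'}}K(x,y)f(y)\,dy - \int_{A_{x'}\setminus A_x}K(x',y)f(y)\,dy.
$$
On $A_x\cap A_{x'}$ the hypothesis $|x-x'|<\eps/2$ gives $|x-y|>\eps>2|x-x'|$, which activates the smoothness estimate; the symmetric difference $A_x\triangle A_{x'}$ sits in two thin shells (of width $\lesssim\eps$) near the inner or outer boundary of the annulus, and so is controlled purely by the size estimate \eqref{eq:CK}.

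For the main piece I would decompose $A_x\cap A_{x'}$ into dyadic shells $S_k:=\{2^k\eps\le|x-y|<2^{k+1}\eps\}$, $k\ge 0$, with the topmost shell truncated by $|x-y|<\delta$. Monotonicity of $\omega$ gives $\omega(|x-x'|/|x-y|)\le\omega(2^{-k-1})$ on $S_k$; the kernel weight contributes a factor $(2^k\eps)^{-d}$, and integrating $|f|$ over $B(x,2^{k+1}\eps)$ (or $B(x,\delta)$ for the top shell, which costs only a dimensional factor since $\delta\le 2^{k+1}\eps$) yields the bound $c_d\,\omega(2^{-k-1})\,M^c_{\eps,2\delta}f(x)$, the relevant ball radius lying in $(\eps,2\delta)$ because $\eps<\delta$. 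Summing over $k$ exploits monotonicity of $\omega$ via
$$
\omega(2^{-k-1})\log 2 = \int_{2^{-k-1}}^{2^{-k}}\omega(2^{-k-1})\,\frac{dt}{t} \le \int_{2^{-k-1}}^{2^{-k}}\omega(t)\,\frac{dt}{t},
$$
so that $\sum_{k\ge 0}\omega(2^{-k-1})\le(\log 2)^{-1}\|\omega\|_{\Dini}$, and the main piece is controlled by $c_d\|\omega\|_{\Dini}M^c_{\eps,2\delta}f(x)$.

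For the boundary terms, a short geometric check shows that a point $y\in A_x\setminus A_{x'}$ satisfies either $|x'-y|\le\eps$, forcing $\eps<|x-y|<3\eps/2$, or $|x'-y|\ge\delta$, forcing (because $\eps<\delta$) $\delta/2<|x-y|<\delta$. In each subcase $|K(x,y)|\le C_K r^{-d}$ with $r\in\{\eps,\delta/2\}$, and the region is contained in a ball around $x$ of radius $\le 3\delta/2<2\delta$; bounding the integral by the corresponding ball average of $|f|$ produces $c_d C_K M^c_{\eps,2\delta}f(x)$. The term over $A_{x'}\setminus A_x$ is handled symmetrically, the only adjustment being that $B(x',r)\subset B(x,r+\eps/2)$ allows one to recenter at $x$ at the cost of a dimensional factor, while keeping the radius inside the admissible range $(\eps,2\delta)$. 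The main obstacle is purely bookkeeping rather than conceptual: one must carefully handle the truncated top dyadic shell, the two thin boundary shells, and the recentering of balls, and verify that the innocuous-looking sum $\sum_k\omega(2^{-k-1})$ really captures the full Dini norm of $\omega$ and not merely a tail of it.
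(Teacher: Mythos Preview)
Your proof is correct and follows essentially the same approach as the paper: split into a smoothness piece handled by a dyadic shell decomposition summing to $\|\omega\|_{\Dini}$, plus boundary error terms on thin shells near $|x-y|\approx\eps$ and $|x-y|\approx\delta$ handled by the size estimate. The paper's only cosmetic difference is that it integrates the smoothness term over all of $A_x$ (not $A_x\cap A_{x'}$) and organizes the boundary remainder as $II_\eps-II_\delta$ with the single kernel $K(x',\cdot)$, but the substance is identical.
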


\begin{proof}
This is a straightforward calculation that we complete in several steps. First, let us write the left hand side of the inequality in a different form:
\begin{equation*}
\begin{split}
  \abs{T_{\eps,\delta}f(x)-T_{\eps,\delta}f(x')}
  &=\Babs{\int_{\eps<\abs{x-y}<\delta}K(x,y)f(y)\ud y-\int_{\eps<\abs{x'-y}<\delta}K(x',y)f(y)\ud y} \\
  &=\Big|\int_{\eps<\abs{x-y}<\delta}[K(x,y)-K(x',y)]f(y)\ud y \\
   &\qquad    +\Big(\int_{\eps<\abs{x-y}<\delta}-\int_{\eps<\abs{x'-y}<\delta}\Big)K(x',y)f(y)\ud y\Big| =:\abs{I+II}.
\end{split}
\end{equation*}
Then we can estimate terms $I$ and $II$ separately. For the term $I$, we use the smoothness of the kernel and the properties of $\omega$:
\begin{align*}
  \abs{I} &\le \int_{\eps<\abs{x-y}<\delta} \omega\left(\frac{\abs{x-x'}}{\abs{x-y}}\right)\frac{\abs{f(y)}}{\abs{x-y}^d}\ud y \\
          &\le \sum_{k:\eps\leq 2^k\eps<\delta} \int_{2^k\eps<\abs{x-y}\leq 2^{k+1}\eps}\omega\left(\frac{\abs{x-x'}}{2^k\eps}\right)\frac{\abs{f(y)}}{(2^k\eps)^d}\ud y \\
          &\le \sum_{k:\eps\leq 2^k\eps<\delta} \omega\left(\frac{\abs{x-x'}}{2^k\eps}\right)\int_{B(x,2^{k+1}\eps)}\frac{\abs{f(y)}}{(2^k\eps)^d}\ud y \\
          &\le \sum_{k=0}^\infty\omega\Big(\frac{\abs{x-x'}}{2^k\eps}\Big) c_d M_{\eps,2\delta}^cf(x) \\
          &\le c_d' M_{\eps,2\delta}^cf(x)\sum_{k=0}^\infty \int_{\abs{x-x'}/2^{k}\eps}^{\abs{x-x'}/2^{k-1}\eps}\omega(t)\frac{\ud t}{t} \\
          &=   c_d' M_{\eps,2\delta}^cf(x)\int_0^{2\abs{x-x'}/\eps}\omega(t)\frac{\ud t}{t}
           \le c_d' M_{\eps,2\delta}^cf(x)\int_0^{1}\omega(t)\frac{\ud t}{t}.
\end{align*}
For the term $II$, let us first notice that
\begin{equation*}
  II=II_{\eps}-II_{\delta},
\end{equation*}
where
\begin{equation*}
\begin{split}
  \qquad II_r
  &:=\Big(\int_{\abs{x-y}>r}-\int_{\abs{x'-y}>r}\Big)K(x',y)f(y)\ud y \\
  &=\Big(\int_{\abs{x-y}>r\geq\abs{x'-y}}-\int_{\abs{x'-y}>r\geq\abs{x-y}}\Big)K(x',y)f(y)\ud y.
\end{split}
\end{equation*}
Since $\abs{x-x'}\leq\frac12\eps\leq\frac12r$, in the first integral we have
$\abs{x-y}\leq\abs{x-x'}+\abs{x'-y}<\frac12 r+r\leq\frac32 r$, and in the second,
$\abs{x-y}\geq\abs{x'-y}-\abs{x'-x}>r-\frac12 r=\frac12 r$, so that $\frac12 r\leq \abs{x-y}\leq\frac32 r$ throughout.
By symmetry, we also have that  $\frac12 r\leq \abs{x'-y}\leq\frac32 r$. Thus, the size estimate of the kernel $K$ gives us
\begin{equation*}
\begin{split}
  \abs{II_r}
  &\leq\int_{\frac12 r\leq\abs{x'-y},\abs{x-y}\leq\frac 32 r }\frac{C_K}{\abs{x'-y}^d}\abs{f(y)}\ud y \\
  &\leq c_d C_K \fint_{B(x,\frac32 r)}\abs{f(y)}\ud y
  \leq c_d C_K M^c_{\eps,2\delta}f(x),
\end{split}
\end{equation*}
which proves the claim.
\end{proof}

\subsection{Lacey's domination theorem revisited}

In this section we will prove the following quantitative version of Lacey's pointwise domination theorem.

\begin{theorem}[Quantitative pointwise domination]
\label{thm:domination}
Let $T$ be a Calder\'on--Zygmund operator with Dini-continuous kernel. Then for any compactly supported function $f\in L^1(\R^d)$ there exist
sparse collections $\Ss^\alpha \subseteq \D^\alpha$, $\alpha \in \{0,1,2\}^d$, such that
\begin{equation}\label{ineq:pointwiseDomin}
  T_{\sharp}f(x) \ \le \ c_d\big(\|T\|_{L^2\to L^2}+C_K+\|\omega\|_{\Dini}\big)\sum_{\alpha \in \{0,1,2\}^d} \mathcal{A}_{ \mathscr{S}^{\alpha}}f(x)
\end{equation}
for almost every $x \in \R^d$, where the constant $c_d$ depends only on the dimension.
\end{theorem}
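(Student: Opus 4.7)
The plan is to prove the theorem by a recursive one-step sparse domination inside each adjacent dyadic system $\D^\alpha$, followed by summation over the $3^d$ systems. Using Lemma~\ref{lem:aprox} together with Lemma~\ref{lem:dyadicCoveringMod} for descendants, one may fix $\alpha\in\{0,1,2\}^d$ and a starting cube $Q_0\in\D^\alpha$ containing $\supp f$, and it then suffices to construct a sparse $\Ss^\alpha\subset\D^\alpha$ with $Q_0\in\Ss^\alpha$ such that $T_\sharp f(x)\le c_d C\,\A_{\Ss^\alpha}f(x)$ for a.e.\ $x\in Q_0$, where $C:=\|T\|_{L^2\to L^2}+C_K+\|\omega\|_{\Dini}$. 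Summing the $3^d$ inequalities produces \eqref{ineq:pointwiseDomin}.

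The heart of the proof is a one-step selection lemma: for each $P\in\D^\alpha$ already selected, produce a disjoint family $\children(P)$ of maximal proper dyadic subcubes of $P$, with $\sum_{Q\in\children(P)}|Q|\le\tfrac12|P|$, such that
\[
T_\sharp(f\mathbf{1}_P)(x)\ \le\ c_d C\,\ave{|f|}_P\quad\text{for a.e.\ }x\in P\setminus\bigcup_{Q\in\children(P)}Q.
\]
Iterating by adding the children of each selected cube yields a sparse $\Ss^\alpha$, and telescoping the pointwise inequalities across generations (after the standard handling of tail contributions from $T_\sharp(f\mathbf{1}_{P^c})$, which fall within $c_d C_K\ave{|f|}_{\widehat P}$ for $\widehat P$ the parent of $P$) delivers the domination. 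The selection is made by declaring $\children(P)$ to be the maximal dyadic subcubes of $P$ on which either $M(f\mathbf{1}_P)$ exceeds $C_1\ave{|f|}_P$ or $T_{\sharp,P}(f\mathbf{1}_P)$ exceeds $c_d C\,\ave{|f|}_P$, with $C_1$ a sufficiently large dimensional constant; the weak $L^1$ bound of $M$ dispatches the first event.

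The sparseness condition therefore hinges on the weak-$(1,1)$ estimate
\[
\big|\{x\in P:T_{\sharp,P}(f\mathbf{1}_P)(x)>\lambda\}\big|\ \le\ \frac{c_d C}{\lambda}\,\|f\mathbf{1}_P\|_{L^1},
\]
with \emph{linear} dependence on each of $\|T\|_{L^2\to L^2}$, $C_K$, and $\|\omega\|_{\Dini}$. I would prove it via the Calder\'on--Zygmund decomposition of $f\mathbf{1}_P$ at height $\sim\lambda/\ave{|f|}_P$: the good part is controlled through the $L^2$-boundedness of $T$, while for the bad part one compares $T_{\eps,\delta}$ at the centre of each CZ cube with the evaluation point and invokes Lemma~\ref{lem:truncatedMO}, which produces the factor $C_K+\|\omega\|_{\Dini}$ multiplied by $M^c_{\eps,2\delta}f$; the weak $L^1$ bound of $M$ then closes the estimate.

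The principal obstacle, and the reason this is not simply a careful rereading of \cite{Lacey}, lies in securing this linear dependence on $\|\omega\|_{\Dini}$. Lacey's route fixes a scale $\rho$ so small that $\int_0^\rho\omega(t)\,\ud t/t$ drops below an absolute constant, which generates an implicit and nonlinear dependence on $\omega$ through the choice of $\rho$; this would be fatal for Theorem~\ref{thm:main2}, where the smooth approximants to $\Omega$ have Dini norms growing with the frequency cutoff. Lemma~\ref{lem:truncatedMO} sidesteps the issue by bundling the entire oscillation of $T_{\eps,\delta}$ into a \emph{single} comparison inequality with $\|\omega\|_{\Dini}$ appearing as a clean multiplicative prefactor, after which the stopping-time argument introduces no further $\omega$-dependence and the linear bound claimed in the theorem follows.
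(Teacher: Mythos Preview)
Your approach differs from the paper's and, as written, has a gap in the telescoping step. You claim that the tail $T_\sharp(f1_{P^c})(x)$ is bounded by $c_d C_K\ave{|f|}_{\widehat P}$ for $x\in P$, but this fails whenever $x$ lies near $\partial P$: points $y\in\widehat P\setminus P$ can be arbitrarily close to $x$, so $|K(x,y)|\sim|x-y|^{-d}$ blows up and no average of $|f|$ over $\widehat P$ controls the integral. The paper avoids this issue entirely by localizing the \emph{truncation range} rather than the function: it works with $T_{\sharp,P}f(x)=\sup_{0<\eps<\delta<\frac12\dist(x,\partial P)}|T_{\eps,\delta}f(x)|$, which never sees $f$ outside $P$, and its recursion Lemma~\ref{lem:recursion} produces the bound $T_{\sharp,Q_0}f\le C_T^0\ave{|f|}_{Q_0}1_{Q_0}+\max_{Q\in\Qs(Q_0)}T_{\sharp,Q}f$ valid \emph{everywhere} on $Q_0$, not merely off the stopping cubes. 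This is achieved by finding, for each bad point $x$, the critical scale $\sigma_x$ at which $|T_{\eps,\delta}f(x)|$ first exceeds the threshold, and then covering $B(x,2\sigma_x)$ by a cube of $\D_0$ via Lemma~\ref{lem:dyadicCoveringMod}; the measure of these covering cubes is then controlled by splitting according to whether $M^c_{\sigma_x,2\tau_x}f(x)$ is small (use Lemma~\ref{lem:truncatedMO} and the weak $(1,1)$ of $T_\sharp$) or large (use the weak $(1,1)$ of $M$). This also explains the role of the $3^d$ grids: the covering cubes land in all of $\D_0$, the iterated collection $\Ss_*$ mixes the grids, and the sparseness of each $\Ss^\alpha:=\Ss_*\cap\D^\alpha$ is established in a separate nontrivial step. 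Your plan of running the entire recursion inside one fixed $\D^\alpha$ and then summing is not the paper's mechanism.

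Your reading of the quantitative crux is correct, however: the argument ultimately rests on the weak-type bound $\|T_\sharp\|_{L^1\to L^{1,\infty}}\le c_d(\|T\|_{L^2\to L^2}+C_K+\|\omega\|_{\Dini})$ of Corollary~\ref{cor:quantTrunc}, and Lemma~\ref{lem:truncatedMO} is exactly the device that replaces Lacey's choice of a small $\rho$ by a clean linear factor of $\|\omega\|_{\Dini}$.
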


Note that Theorem \ref{thm:main1} is an immediate corollary of Theorem~\ref{thm:domination}, in combination with the following, by now well known estimate from \cite{HL} (see also \cite{HLi}).

\begin{theorem}
\label{thm:ApAinfty}
Let $1<p<\infty$ and $w\in A_p$. Then
$$
\|\mathcal{A}_{ \mathscr{S}^{\alpha}}\|_{L^p(w)\to L^p(w)}\le c_p \{w\}_{A_p}.
$$
\end{theorem}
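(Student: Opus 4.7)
The plan is a duality argument combined with the Hytönen--P\'erez weighted Carleson embedding for $A_\infty$ weights. Set $\sigma:=w^{1-p'}$, the dual weight (so $\sigma\in A_{p'}$ with $[\sigma]_{A_{p'}}=[w]_{A_p}^{1/(p-1)}$, and in particular $\sigma\in A_\infty$). By $L^p(w)$-duality and the standard substitution $f\mapsto f\sigma$, $g\mapsto gw$, the desired operator-norm estimate is equivalent to the bilinear inequality
\[
\Lambda(f,g)\;:=\;\sum_{Q\in\mathscr{S}^\alpha}\frac{\sigma(Q)w(Q)}{|Q|}\langle f\rangle_{Q,\sigma}\langle g\rangle_{Q,w}\;\le\;c_p\{w\}_{A_p}\|f\|_{L^p(\sigma)}\|g\|_{L^{p'}(w)},
\]
where $\langle h\rangle_{Q,\mu}:=\mu(Q)^{-1}\int_Q h\,d\mu$. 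The $A_p$ condition, in the form $\langle w\rangle_Q^{1/p}\langle\sigma\rangle_Q^{1/p'}\le[w]_{A_p}^{1/p}$, then provides the factorisation $\sigma(Q)w(Q)/|Q|\le[w]_{A_p}^{1/p}\sigma(Q)^{1/p}w(Q)^{1/p'}$, which extracts the $[w]_{A_p}^{1/p}$ part of $\{w\}_{A_p}$ and reduces the task to bounding the remaining bilinear sum by a product of H\"older-type factors.

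The central tool is the Hyt\"onen--P\'erez Carleson embedding: for any $u\in A_\infty$, any sparse family $\mathscr{S}$, and any $h\ge 0$,
\[
\sum_{Q\in\mathscr{S}}u(Q)\,\langle h\rangle_{Q,u}^r\;\le\;c_r\,[u]_{A_\infty}\,\|h\|_{L^r(u)}^r,\qquad 1<r<\infty,
\]
proved by a stopping-time / principal-cube decomposition exploiting the Fujii--Wilson definition of $[u]_{A_\infty}$. After H\"older with exponents $p$ and $p'$, invoking this embedding with $(u,h,r)=(\sigma,f,p)$ and with $(u,h,r)=(w,g,p')$ produces a bound involving the \emph{product} $[\sigma]_{A_\infty}^{1/p}[w]_{A_\infty}^{1/p'}$ of the two $A_\infty$ factors. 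Since $\{w\}_{A_p}$ carries only the \emph{maximum} of the same two expressions, this symmetric application is wasteful by one $A_\infty$ factor and must be refined.

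The refinement --- and the main technical obstacle --- is to run the H\"older step \emph{asymmetrically}: apply the Carleson embedding on one side only (say the $\sigma$-side, producing $[\sigma]_{A_\infty}^{1/p}$), while on the other side use the pointwise estimate $\langle g\rangle_{Q,w}\le M_w^d g(x)$ for $x\in E_Q\subset Q$, combined with the disjointness of the sparseness sets $\{E_Q\}$ and the $L^{p'}(w)$-boundedness of the dyadic weighted maximal operator $M_w^d$ (whose norm is an absolute constant, independent of $w$). This yields
\[
\Lambda(f,g)\;\le\;c_p\,[w]_{A_p}^{1/p}\,[\sigma]_{A_\infty}^{1/p}\,\|f\|_{L^p(\sigma)}\|g\|_{L^{p'}(w)}\;\le\;c_p\,\{w\}_{A_p}\,\|f\|_{L^p(\sigma)}\|g\|_{L^{p'}(w)},
\]
the last inequality being the trivial $[\sigma]_{A_\infty}^{1/p}\le\max\{[w]_{A_\infty}^{1/p'},[\sigma]_{A_\infty}^{1/p}\}$. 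By the symmetry of $\Lambda$ under the exchange $(f,\sigma,p)\leftrightarrow(g,w,p')$, the same argument with roles reversed gives a companion bound with $[w]_{A_\infty}^{1/p'}$ in place of $[\sigma]_{A_\infty}^{1/p}$; either of the two asymmetric estimates alone already implies the stated inequality. This asymmetric improvement, which distinguishes the characteristic $\{w\}_{A_p}$ from the cruder $[w]_{A_p}^{\max\{1,1/(p-1)\}}$, is the essential content of the refinements in \cite{HL,HLi}.
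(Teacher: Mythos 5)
Your reduction to the bilinear sparse form $\Lambda(f,g)$, the $A_p$ factorization $\sigma(Q)w(Q)/|Q|\le [w]_{A_p}^{1/p}\,\sigma(Q)^{1/p}w(Q)^{1/p'}$, and the statement of the weighted Carleson embedding $\sum_{Q\in\mathscr{S}}u(Q)\langle h\rangle_{Q,u}^r\le c_r[u]_{A_\infty}\|h\|_{L^r(u)}^r$ are all correct; so is the observation that applying it symmetrically via H\"older produces the product $[\sigma]_{A_\infty}^{1/p}[w]_{A_\infty}^{1/p'}$ rather than the maximum. The error is in the step you single out as ``the main technical obstacle'': the asymmetric estimate
\begin{equation*}
  \Lambda(f,g)\;\le\;c_p\,[w]_{A_p}^{1/p}\,[\sigma]_{A_\infty}^{1/p}\,\|f\|_{L^p(\sigma)}\|g\|_{L^{p'}(w)}
\end{equation*}
is simply false, and consequently so is the claim that ``either of the two asymmetric estimates alone already implies the stated inequality.'' If both asymmetric estimates held, they would give a bound by $[w]_{A_p}^{1/p}\,\min\{[w]_{A_\infty}^{1/p'},[\sigma]_{A_\infty}^{1/p}\}$, which is strictly stronger than $\{w\}_{A_p}$ (a $\max$, not a $\min$) and is ruled out by the power-weight extremizers. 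Take $d=1$, $p=2$, $w=|x|^a$ with $a\to 1^-$, so $\sigma=|x|^{-a}$. Then $[w]_{A_2}\approx (1-a)^{-1}$, $[w]_{A_\infty}\approx 1$ (since $|x|^a$ with $a>0$ is bounded near the origin), and $[\sigma]_{A_\infty}\approx (1-a)^{-1}$. Your asymmetric estimate with $[w]_{A_\infty}^{1/p'}$ in place of $[\sigma]_{A_\infty}^{1/p}$ would give $\|\mathcal{A}_{\mathscr{S}^\alpha}\|_{L^2(w)\to L^2(w)}\lesssim (1-a)^{-1/2}$. But the Hilbert transform is pointwise dominated by the sparse operators $\mathcal{A}_{\mathscr{S}^\alpha}$ and its norm on $L^2(|x|^a)$ is comparable to $[w]_{A_2}\approx (1-a)^{-1}$, so at least one sparse operator must have norm $\gtrsim (1-a)^{-1}\gg (1-a)^{-1/2}$. (The bound $\{w\}_{A_2}=[w]_{A_2}^{1/2}\max\{[w]_{A_\infty}^{1/2},[\sigma]_{A_\infty}^{1/2}\}\approx (1-a)^{-1}$ is consistent.)

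The reason the ``pointwise estimate plus disjointness plus $M_w^d$ boundedness'' trick does not eliminate an $A_\infty$ factor is that the quantity you need to dominate on the $w$-side carries the full mass $w(Q)$, whereas the disjoint integrals $\int_{E_Q}(M_w^d g)^{p'}\,dw$ only see $w(E_Q)$. Passing from $w(Q)$ to $w(E_Q)$ (the sets $E_Q$ satisfy $|E_Q|\ge\frac12|Q|$ in Lebesgue measure, not in $w$-measure) is itself an $A_\infty$-type inequality for $w$ and cannot be had for free; that is precisely where a factor of $[w]_{A_\infty}$ must reappear. The actual argument in \cite{HL} does not prove either asymmetric bound. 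It establishes the two-sided testing characterization for the positive sparse operator and then bounds the two testing constants \emph{separately}, one by $[w]_{A_p}^{1/p}[\sigma]_{A_\infty}^{1/p}$ and the other by $[w]_{A_p}^{1/p}[w]_{A_\infty}^{1/p'}$; only the \emph{sum} of these (equivalently, the maximum) controls the operator norm. To repair your proof you would need to replace the asymmetric step by a genuine two-part argument (e.g. via Sawyer testing, or a parallel stopping-cube decomposition in which the $\sigma$-corona and the $w$-corona each contribute one of the two $A_\infty$ factors), rather than expecting one side to come for free.
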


\begin{remark}
\label{rem:CZO}
The original Calder\'on--Zygmund operator can be estimated in terms of the maximal truncations by $\abs{Tf(x)}\leq T_{\sharp}f(x)+\Norm{T}{L^2\to L^2}\abs{f(x)}$, cf. \cite[I.7.2]{Stein:HA}. (The case of the identity operator $T=I$, which is a Calder\'on--Zygmund operator with zero kernel, shows that the second term cannot be omitted.) Since $A_p$-weighted estimates for the second term (in effect, the identity operator) are trivial, Theorem \ref{thm:main1} remains true for the linear $T$ in place of $T_{\sharp}$, under the same assumptions.
\end{remark}

The novelty in both Theorems \ref{thm:main1} and \ref{thm:domination} is the explicit constant on the right. The main tool for the proof of Theorem \ref{thm:domination} is the following elaboration of Lacey's recursion lemma \cite[Lemma 4.7]{Lacey}, again with the same explicit constant. As mentioned in the Introduction, obtaining this explicit constant requires some nontrivial twist in the argument, and we provide the full details.

\begin{lemma}
\label{lem:recursion}
Let $f$ be an integrable function. Then for every $Q_0 \in \D_0$, there exists a collection $\Qs(Q_0)$ of dyadic cubes $Q\subset Q_0$ such that the following three conditions hold:
\begin{enumerate}
  \item \label{recursion:prop1} $\sum_{Q\in \Qs(Q_0)}|Q|< \eps_d |Q_0|$; (small size)
  \item \label{recursion:prop2} if $Q'\subset Q$ and $Q',Q\in \Qs(Q_0)$, then $Q'=Q$; (maximal with respect to inclusion)
  \item \label{recursion:prop3} we have
                          \begin{equation*}
                            T_{\sharp,Q_0}f\le C_T^0 \ave{|f|}_{Q_0}1_{Q_0}+\max_{Q\in \Qs(Q_0)}T_{\sharp,Q}f,
                          \end{equation*}
\end{enumerate}
where $C_T^0 \coloneqq c_d\big(\|T\|_{L^2\to L^2}+C_K+\|\omega\|_{\Dini}\big)$ and
$\eps_d$ can be taken as small as desired, at the cost of choosing a large enough $c_d$.
\end{lemma}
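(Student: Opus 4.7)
The plan is to realize $\Qs(Q_0)$ as the family produced by a Calder\'on--Zygmund-type stopping-time decomposition applied simultaneously to the averages of $f$ and to the maximal truncation $T_{\sharp}(f\,1_{Q_0})$. The linear dependence of $C_T^0$ on $\|\omega\|_\Dini$ is to come from the Cotlar-type weak-$(1,1)$ bound
\[
\|T_{\sharp}g\|_{L^{1,\infty}(\R^d)}\leq c_d\bigl(\|T\|_{L^2\to L^2}+C_K+\|\omega\|_\Dini\bigr)\|g\|_{L^1(\R^d)},
\]
which is known to hold with linear dependence on $\|\omega\|_\Dini$ for Dini-continuous kernels. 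This substitutes for Lacey's qualitative trick of shrinking the small parameter $\rho$ so that $\int_0^\rho\omega(t)\,dt/t$ becomes small; the only other smoothness input will be the pointwise Lemma \ref{lem:truncatedMO}, whose constant is already linear in $\|\omega\|_\Dini$.

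Fix $\Lambda=\Lambda(\varepsilon_d)$ sufficiently large and set $C_T^0=c_d(\|T\|_{L^2\to L^2}+C_K+\|\omega\|_\Dini)$. Define the exceptional set
\[
E:=\bigl\{x\in Q_0:M(f1_{Q_0})(x)>\Lambda\ave{|f|}_{Q_0}\bigr\}\cup\bigl\{x\in Q_0:T_{\sharp}(f1_{Q_0})(x)>\Lambda C_T^0\ave{|f|}_{Q_0}\bigr\},
\]
with $|E|\leq\tfrac{\varepsilon_d}{2}|Q_0|$ by the weak-$(1,1)$ bounds for $M$ and $T_{\sharp}$. Take $\Qs(Q_0)$ to be the collection of maximal dyadic subcubes $Q\subsetneq Q_0$ in $\D_0$ whose bounded dimensional dilate $CQ\subseteq Q_0$ meets $E$ (available through Lemma \ref{lem:dyadicCoveringMod}). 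Disjointness, maximality, and bounded overlap of the $CQ$ yield $\sum_Q|Q|\lesssim_d|E|<\varepsilon_d|Q_0|$, giving (\ref{recursion:prop1}) and (\ref{recursion:prop2}). For the pointwise inequality: a good point $x\in Q_0\setminus\bigcup\Qs(Q_0)$ lies outside $E$ almost everywhere, and since the condition $\delta<\tfrac12\dist(x,\partial Q_0)$ reduces $T_{\sharp,Q_0}f(x)$ to $T_{\sharp}(f1_{Q_0})(x)$ (the contribution from $f1_{Q_0^c}$ vanishes), the bound $C_T^0\ave{|f|}_{Q_0}$ is automatic. For $x\in Q\in\Qs(Q_0)$, write $r_x:=\tfrac12\dist(x,\partial Q)$ and split any admissible $T_{\varepsilon,\delta}f(x)$ at scale $r_x$: scales below $r_x$ contribute at most $T_{\sharp,Q}f(x)$, while the piece $T_{r_x,\delta}f(x)$ is transferred via Lemma \ref{lem:truncatedMO} to a good reference point $x'\in(CQ\cap Q_0)\setminus\bigcup\Qs(Q_0)$, paying $c_d(C_K+\|\omega\|_\Dini)Mf(x)\lesssim C_T^0\ave{|f|}_{Q_0}$, together with $|T_{r_x,\delta}f(x')|\leq T_{\sharp}(f1_{Q_0})(x')\leq C_T^0\ave{|f|}_{Q_0}$.

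The main obstacle is making the shift to a good point $x'$ in Lemma \ref{lem:truncatedMO} legitimate when $x$ is close to $\partial Q$: then $r_x$ is small, yet any $x'\notin Q$ lies at distance at least $2r_x$ from $x$, right at the boundary of what the requirement $|x-x'|<\tfrac12 r_x$ in Lemma \ref{lem:truncatedMO} can tolerate. Here the three adjacent systems $\D^\alpha$ come decisively into play: rather than insisting that the cube appearing in $T_{\sharp,Q}$ live in the same $\D^\alpha$ as $Q_0$, one can for each scale $r_x$ choose via Lemma \ref{lem:dyadicCoveringMod} a different cube $Q'\in\D_0$, possibly from another adjacent system, that contains a generous ball around $x$, so that $x$ is interior to $Q'$ with plenty of room to spare and the displacement required by Lemma \ref{lem:truncatedMO} becomes admissible. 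Engineering the stopping construction so that this switching is compatible with the recursion — and with the small-measure condition (\ref{recursion:prop1}) — is the technically delicate point, and is where the present argument truly departs from the one in~\cite{Lacey}, producing the desired linear dependence of $C_T^0$ on $\|\omega\|_\Dini$.
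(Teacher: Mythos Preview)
Your stopping-time construction has a genuine gap at exactly the point you flag in your final paragraph, and the fix you sketch there does not close it. The trouble is structural: once the collection $\Qs(Q_0)$ is fixed, property~(3) must hold with $T_{\sharp,Q}$ for \emph{those} cubes $Q$. For $x\in Q\in\Qs(Q_0)$ you split the truncations at $r_x=\tfrac12\dist(x,\partial Q)$, and for the large-scale piece you need Lemma~\ref{lem:truncatedMO} with a good point $x'\notin\bigcup\Qs(Q_0)$ satisfying $|x-x'|<\tfrac12 r_x$. But every good point lies outside $Q$, hence at distance at least $\dist(x,\partial Q)=2r_x$ from $x$---a factor of four too far. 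Passing to another cube $Q'\in\D_0$ containing $x$ well in its interior does not help: $Q'$ is not in your fixed family $\Qs(Q_0)$, so bounding by $T_{\sharp,Q'}f(x)$ does not give property~(3), and if you enlarge the family to include such $Q'$ you lose the small-size condition and the maximality. There is no choice of stopping rule (Whitney-type, dilate-meets-$E$, etc.) that simultaneously makes $x$ deep inside its selected cube \emph{and} places a good point within $\tfrac12 r_x$ of every $x\in Q$.

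The paper's argument avoids this entirely by reversing the logic. Instead of first covering a global exceptional set and then locating good points near each $x$, it works point by point: for each $x\in E_0$ it extracts (by continuity of $(\eps,\delta)\mapsto T_{\eps,\delta}f(x)$) the \emph{largest} scale $\sigma_x$ at which a large truncation first occurs, so that $|T_{\sigma,\tau}f(x)|\le C_T^0\ave{|f|}_{Q_0}$ for all $\sigma\ge\sigma_x$. This immediately gives $T_{\sharp,Q_0}f(x)\le\sup_{\eps\le\delta\le\sigma_x}|T_{\eps,\delta}f(x)|+C_T^0\ave{|f|}_{Q_0}$, and one then \emph{chooses} $Q_x\in\D_0$ to contain $B(x,2\sigma_x)$ with $\ell(Q_x)\lesssim\sigma_x$; by construction $\sigma_x\le\tfrac12\dist(x,\partial Q_x)$, so the first term is $\le T_{\sharp,Q_x}f(x)$ and property~(3) is automatic. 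Lemma~\ref{lem:truncatedMO} is used \emph{only afterwards}, and in the opposite direction: it shows that the whole ball $B(x,\tfrac12\sigma_x)$ inherits largeness of $T_\sharp(1_{Q_0}f)$, which combined with the weak-$(1,1)$ bounds controls $\sum|Q_x|$. In short, the paper picks the scale before the cube; your approach picks the cube before the scale, and that is what breaks.
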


\begin{proof}
The idea of the proof is to show that for any constant $C_T^0 > 0$ we can cover the set $E_0$,
\begin{equation*}
  E_0 \coloneqq \{x\in Q_0: T_{\sharp,Q_0}f(x)>C_T^0\ave{\abs{f}}_{Q_0}\},
\end{equation*}
with countably many cubes $Q_i \in \D_0$ that satisfy conditions \eqref{recursion:prop2} and \eqref{recursion:prop3} and if the constant
$C^T_0$ is of the form $c_d\big(\|T\|_{L^2\to L^2}+C_K+\|\omega\|_{\Dini}\big)$, then the cubes also satisfy condition \eqref{recursion:prop1}.

Let $x \in E_0$. Since the function $(\eps,\delta) \mapsto T_{\eps,\delta}f(x)$ is continuous, we can choose such radii $0<\sigma_x<\tau_x\leq \frac{1}{2} \cdot \dist(x,\partial Q_0)$ that
\begin{equation*}
  \abs{T_{\sigma_x,\tau_x}f(x)}\geq C_T^0\ave{\abs{f}}_{Q_0}
\end{equation*}
and
\begin{equation*}
  \abs{T_{\sigma,\tau}f(x)}\leq C_T^0\ave{\abs{f}}_{Q_0}\qquad\text{if}\quad \sigma_x\leq\sigma\leq \tau\leq \frac{1}{2} \cdot \dist(x,Q_0).
\end{equation*}
For simplicity, we drop the conditions $\eps>0$ and $\delta\leq \frac{1}{2} \cdot \dist(x,Q_0)$ from the notation. Now the maximality of $\sigma_x$ implies the following:
\begin{align*}
  T_{\sharp,Q_0}f(x) &=\sup_{\eps\leq\delta}\abs{T_{\eps,\delta}f(x)} \\
                     &=\sup_{\eps\leq\delta\leq \sigma_x}\abs{T_{\eps,\delta}f(x)} \vee\sup_{\sigma_x\leq\eps\leq\delta}\abs{T_{\eps,\delta}f(x)}
                       \vee \sup_{\eps\leq \sigma_x\leq \delta}\abs{T_{\eps,\delta}f(x)} \\
                     & =:I\vee II\vee III,
\end{align*}
where
\begin{equation*}
  III=\sup_{\eps\leq \sigma_x\leq\delta}\abs{T_{\eps,\sigma_x}f(x)+T_{\sigma_x,\delta}f(x)}
  \leq I+II,
\end{equation*}
and $II\leq C_T^0\ave{\abs{f}}_{Q_0}$ by definition. So altogether we find that
\begin{equation}\label{eq:localBound}
  T_{\sharp,Q_0}f(x)\leq \sup_{\eps\leq\delta\leq \sigma_x}\abs{T_{\eps,\delta}f(x)}+C^0_T\ave{\abs{f}}_{Q_0}\qquad\forall x\in E_0,
\end{equation}
which is a preliminary version of the pointwise domination result we are proving. Now we can use Lemma \ref{lem:dyadicCoveringMod} to
get from the preliminary version to the desired estimate. Since $B(x, 2\sigma_x) \subset Q_0$ for every $x \in E_0$, there
exists a cube $Q_x \in \D_0$ such that $B(x, 2 \sigma_x) \subset Q_x \subset Q_0$ and $\ell(Q_x) \le 12 \cdot 2 \sigma_x$
for every $x \in E_0$. Let $(Q_i)_i$ be the sequence of such cubes $Q_x$ that are maximal with respect to inclusion,
that is, for each $Q_i$ there does not exist $R \in \D_0$ such that $Q_i\subsetneq R \subseteq Q_0$. Then for every $x \in E_0$ we have
\begin{align*}
  T_{\sharp,Q_0} f(x) &\overset{\eqref{eq:localBound}}{\le} \sup_{0 < \eps \le \delta \le \sigma_x} \left| T_{\eps,\delta} f(x) \right| + C^0_T \ave{|f|}_{Q_0} \\
                      &\le\sup_{0 < \eps \le \delta \le \frac{1}{2} \cdot \dist(x, \partial Q_x)} \left| T_{\eps,\delta} f(x) \right| + C^0_T \ave{|f|}_{Q_0} \\
                      &= T_{\sharp,Q_x} f(x) + C^0_T \ave{|f|}_{Q_0} \\
                      &\leq \max_i T_{\sharp,Q_i} f(x) + C^0_T \ave{|f|}_{Q_0}
\end{align*}
and for every $x \in Q_0 \setminus E_0$ we have $T_{\sharp,Q_0} f(x) \le C^0_T \ave{|f|}_{Q_0}$ by definition. Thus, the cubes $Q_i$ satisfy Lacey's conditions (2) and (3)
and to complete the proof, we only need to show that with a suitable choice of $C^0_T$ the cubes also satisfy property \eqref{recursion:prop1}.

Let us split the set $E_0$ into two parts:
\begin{equation*}
  E_1:=\{x\in E_0: M_{\sigma_x,2\tau_x}f(x)\leq C^1_T\ave{\abs{f}}_{Q_0}\},\qquad E_2:=E_0\setminus E_1,
\end{equation*}
where $C^1_T$ is a constant whose value we will fix in the next step. Then, for $x\in E_1$ and $x'\in B(x,\frac12\sigma_x)$, we have
\begin{align*}
  \abs{T_{\sigma_x\tau_x}f(x')-T_{\sigma_x\tau_x}f(x)} &\overset{\ref{lem:truncatedMO}}{\le} c_d(C_K + \Norm{\omega}{\operatorname{Dini}})M_{\sigma_x,2\tau_x}^c f(x) \\
                                                       &\le c_d(C_K + \Norm{\omega}{\operatorname{Dini}})C_T^1\ave{\abs{f}}_{Q_0} \\
                                                       &=\frac12 C_T^0\ave{\abs{f}}_{Q_0},
\end{align*}
provided that we choose
\begin{equation*}
  C_T^1:=\frac{C_T^0}{2c_d(C_K+\Norm{\omega}{\operatorname{Dini}})}.
\end{equation*}
Then, since $x \in E_1 \subseteq E_0$, it follows that
\begin{equation*}
  T_{\sharp}(1_{Q_0}f)(x') \ge \abs{T_{\sigma_x,\tau_x}f(x')} \ge \abs{T_{\sigma_x,\tau_x}f(x)}-\frac12 C_T^0 \ave{\abs{f}}_{Q_0} > \frac12 C_T^0\ave{\abs{f}}_{Q_0}
\end{equation*}
for all $x'\in B(x,\frac12\sigma_x)$. In particular,
\begin{equation*}
\begin{split}
  \left| \bigcup_{x\in E_1}B(x,\tfrac12\sigma_x) \right| &\leq \left| \{T_{\sharp}(1_{Q_0}f)>\tfrac12 C_T^0\ave{\abs{f}}_{Q_0}\} \right| \\
                                                         &\leq \frac{\Norm{T_{\sharp}}{L^1\to L^{1,\infty}}}{\tfrac12 C_T^0\ave{\abs{f}}_{Q_0}}\Norm{1_{Q_0}f}{L^1}
                                                          = \frac{2\Norm{T_{\sharp}}{L^1\to L^{1,\infty}}}{C_T^0}\abs{Q_0}
\end{split}
\end{equation*}
by the weak $L^1$ inequality of $T_\sharp$.

Let us then show that with this choice of $C^1_T$ and a suitable choice of $C_T^0$ the size of $E_2$ is controlled.
Let $x \in E_2$. By definition, we can choose some $\rho_x\in [\sigma_x,2\tau_x]$ such that
\begin{equation*}
  \fint_{B(x,\rho_x)}\abs{f(y)}\ud y>C_T^1\ave{\abs{f}}_{Q_0}.
\end{equation*}
Since $\tau_x \le \frac{1}{2} \cdot \dist(x,\partial Q_0)$, we know that $B(x,2\rho_x) \subset Q_0$. In particular,
\begin{equation*}
  M(1_{Q_0}f)(x')>C^1_T\ave{\abs{f}}_{Q_0}
\end{equation*}
for all $x'\in B(x,\rho_x)$, where $M$ is the noncentered Hardy-Littlewood maximal operator
$$
Mf(x):=\sup_{B\ni x}\fint_B|f|dx.
$$
 Thus
\begin{align*}
  \left| \bigcup_{x\in E_2}B(x,\frac12\sigma_x) \right| &\leq
  \left| \bigcup_{x\in E_2}B(x,\rho_x) \right| \leq\abs{\{M(1_{Q_0}f)>C_T^1\ave{\abs{f}}_{Q_0}\}} \\
                                               &\leq\frac{c_d}{C^1_T\ave{\abs{f}}_{Q_0}}\Norm{1_{Q_0}f}{L^1}
                                                =\frac{c_d'(C_K+\Norm{\omega}{\operatorname{Dini}})}{C_T^0}\abs{Q_0}.
\end{align*}
by the weak $L^1$ inequality of Hardy-Littlewood maximal operator.

Finally, let us combine all the previous calculations. For every maximal cube $Q_i$, let $x_i \in E_0$ be a point such that $Q_i = Q_{x_i}$. Then, since
$\ell(Q_x) \le 12 \cdot 2\sigma_x$ for each $x \in E_0$, we have $|Q_{x_i}| \le c_d |B(x_i, \tfrac{1}{2} \sigma_{x_i})|$ for every $i$. In particular,
since the cubes in the collection $\{ Q_{x_i} \colon Q_{x_i} \in \D^\alpha\}$ are pairwise disjoint for a fixed $\alpha \in \{0,1,2\}^d$, we get that
\begin{align*}
  \sum_i |Q_{x_i}| &= \sum_{\alpha \in \{0,1,2\}^d}
     \sum_{i:Q_{x_i}\in\mathscr{D}^\alpha } \left| Q_{x_i} \right| \\
   &  \leq c_d \sum_{\alpha \in \{0,1,2\}^d}     \sum_{i:Q_{x_i}\in\mathscr{D}^\alpha }
      \left| B(x_i,\tfrac12\sigma_{x_i}) \right| \\
   &  = c_d \sum_{\alpha \in \{0,1,2\}^d}
   \Big|  \bigcup_{i:Q_{x_i}\in\mathscr{D}^\alpha }
       B(x_i,\tfrac12\sigma_{x_i}) \Big| \\
                   &\le 3^d c_d \left( \Big| \bigcup_{x \in E_1} B(x, \tfrac{1}{2}\sigma_{x}) \Big| + \Big| \bigcup_{x \in E_2} B(x, \rho_x) \Big| \right) \\
                   &\le c_d' \frac{\| T_\sharp \|_{L^1 \to L^{1,\infty}} + C_K + \| \omega \|_{\Dini}}{C_T^0} |Q_0|.
\end{align*}
By Corollary \ref{cor:quantTrunc}, we know that $\| T_\sharp \|_{L^1 \to L^{1,\infty}} \le c_d( \|T\|_{L^2 \to L^2} + C_K + \| \omega \|_{\Dini}).$ Hence, if
\begin{equation*}\label{eq:TheConstant}
  C^0_T = c_d(C_K+\Norm{\omega}{\operatorname{Dini}}+\Norm{T}{L^2\to L^2}),
\end{equation*}
then the cubes $Q_i$ satisfy property \eqref{recursion:prop1}.
\end{proof}

With the lemma at hand, Theorem~\ref{thm:domination} will follow by repeating the rest of Lacey's original proof, with the application of his Lemma 4.7 replaced by our Lemma~\ref{lem:recursion}. For completeness of the presentation and convenience of the reader, we also provide the details here:

\begin{proof}[Proof of Theorem \ref{thm:domination}]
  Let $f$ be a compactly supported integrable function and let $B$ be a ball such that $B \supseteq \supp f$. By Lemma \ref{lem:aprox}, there exists a dyadic cube $P_0 \in \D_0$
  such that $2B \subset P_0$. Our strategy is to construct a collection $\Ss_0 \subset \D_0$ and a nested sequence of collections
  $\Ss_n \subset \D_0$, $n = 1,2,\ldots$, such that the collections $\D^\alpha \cap \bigcup_{n=0}^\infty \Ss_n^\alpha$ are sparse, the operator $\A_{\Ss_0}$ satisfies the pointwise inequality
  \eqref{ineq:pointwiseDomin} for every $x \notin P_0$ and the operator $\A_{\Ss_*}$ satisfies \eqref{ineq:pointwiseDomin} for almost every $x \in P_0$, where $\Ss_* \coloneqq \bigcup_{n=1}^\infty \Ss_n$.
  We prove the theorem in three parts.

  \textbf{Part 1}: Construction of the collection $\Ss_0$. Let $\kappa_d$ be a dimensional constant such that
  $\dist(x,\partial (2 \kappa_d P_0)) \ge \text{diam}(P_0)$ for every $x \in P_0$, where $2\kappa_d P_0$ is the concentric enlargement
  of $P_0$ with side length $2\kappa_d \cdot \ell(P_0)$. Then we can use Lemma \ref{lem:aprox} to take a cube $P_1 \in \D_0$ such
  that $2 \kappa_d P_0 \subset P_1$. Since $\supp f \subseteq B$ and $\dist(x,y) \le \text{diam}(P_0) \le \dist(x,\partial P_1)$ for every $x \in P_0$ and $y \in B$, we have $T_{\sharp}f(x) \le T_{\sharp,P_1}f(x)$
  for every $x \in P_0$.

With this, we see that the construction of the collection $\Ss_0$ is simple. For every $i = 2,3,\ldots$, let $P_i \in \D_0$ be a dyadic cube given by Lemma \ref{lem:aprox} such that
  $2 P_{i-1} \subset P_i$. Then, since $\supp f \subset B$, we have $T_{\varepsilon,\delta} f(x) = 0$ for every $x \notin B$ and $\delta < \dist(x,B)$. Also,
  for $x \in P_{n+1} \setminus P_n$, it holds that $\ell(P_{n+1}) \le c_d \cdot \dist(x,B)$ by the construction of the cubes. Thus, for $x \in P_{n+1} \setminus P_n$, we have
  \begin{align*}
    T_\sharp f(x) &= \sup_{\varepsilon > 0} \left| \left( \int_{\varepsilon < |x-y| < \dist(x,B) \vee \varepsilon} + \int_{|x-y| > \dist(x,B) \vee \varepsilon} \right) K(x,y) f(y) \, dy \right| \\
                  &= \sup_{\eps > \dist(x,B)} \left| \int_{|x-y| > \eps} K(x,y) f(y) \, dy \right| \\
                  &\overset{\eqref{eq:CK}}{\le} C_K \sup_{\eps > \dist(x,B)} \int_{|x-y| > \eps} \frac{|f(y)|}{\varepsilon^d} \, dy \\
                  &\le c_d C_K \int_{\R^d} \frac{|f(y)|}{\ell(P_{n+1})^d} \, dy =  c_d C_K \ave{|f|}_{P_{n+1}} 1_{P_{n+1}}(x).
  \end{align*}
Thus, we can set $\Ss_0 = \{P_i \colon i = 1,2,\ldots\}$. We note that the collections $\Ss_0 \cap \D^\alpha$ are sparse by construction.

  \textbf{Part 2}: Construction of the collections $\Ss_n$, $n \ge 1$. From now on, we say that a cube in a given collection is \emph{maximal} if it is maximal
  with respect to inclusion and we say that it is \emph{size-maximal} if it is maximal with respect to side length.

  Let $\Pc_1$ be the collection $\Qs(P_1)$ given by Lemma \ref{lem:recursion} and denote $\Pc_1^* \coloneqq \{ Q \in \Pc_1 \colon \ell(Q) = \max\{\ell(Q') \colon Q' \in \Pc_1\}\}$.
  Recursively, we set
  \begin{align*}
    \Rc_{n+1} &\coloneqq \left( \Pc_n \setminus \Pc_n^* \right) \cup \bigcup_{Q \in \Pc_n^*} \Qs(Q), \\
    \Pc_{n+1} &\coloneqq \text{maximal cubes of } \Rc_{n+1}, \\
    \Pc_{n+1}^* &\coloneqq \text{size-maximal cubes of } \Pc_{n+1}.
  \end{align*}
  Using the collections $\Pc_n^*$ we can define the collections $\Ss_n$: we set
$$
    \Ss_1 \coloneqq \{P_1\},\qquad \Ss_{n+1} \coloneqq \Ss_n \cup \Pc_n^*.
$$
  Thus, we start with the collection $\Pc_1 = \Qs(P_1)$ and pick the size-maximal cubes $Q_i \in \Qs(P_1)$ to form $\Pc_1^*$. Then, we add these cubes $Q_i$ to the collection
  $\Ss_1^\alpha$ to form the collection $\Ss_2^\alpha$ and apply Lemma \ref{lem:recursion} for each of them to get the collections $\Qs(Q_i)$. We add these ``new'' cubes to
  the collection $\Qs(P_1) \setminus \{Q_i\}_i$ which gives us the collection $\Rc_2$. Then, we form the collection $\Pc_2$ by removing the cubes that are not maximal and start over.
  This way the cubes in $\Ss_{n+1} \setminus \Ss_n$ have strictly smaller side length than all the cubes in $\Ss_n$ for every $n \in \N$.

  We now claim that
  \begin{equation}
    \label{induction:bound} T_{\sharp, P_1} f \le C_T^0 \mathcal{A}_{\Ss_n}f + \max_{Q \in \Pc_n} T_{\sharp,Q}f
  \end{equation}
  for every $n = 1,2,\ldots$, where $C_T^0 = c_d\big(\|T\|_{L^2\to L^2}+C_K+\|\omega\|_{\Dini}\big)$ as in the previous proof. For $n=1$ the claim is true by Lemma
  \ref{lem:recursion}. Let us then assume that the claim holds for $n = k$. Then
  \begin{align*}
    \max_{Q \in \Pc_k} T_{\sharp,Q}f &= \max\left\{ \max_{Q \in \Pc_k \setminus \Pc_k^*} T_{\sharp,Q}f, \max_{Q \in \Pc_k^*} T_{\sharp,Q}f \right\} \\
                                     &\overset{\ref{lem:recursion}}{\le} \max\left\{ \max_{Q \in \Pc_k \setminus \Pc_k^*} T_{\sharp,Q}f, \max_{Q \in \Pc_k^*} \left\{ C_T^0 \ave{|f|}_Q 1_Q + \max_{Q' \in \Qs(Q)} T_{\sharp,Q'}f \right\} \right\} \\
                                     &\le \max\left\{ \max_{Q \in \Pc_k \setminus \Pc_k^*} T_{\sharp,Q}f, \max_{Q \in \Pc_k^*} \, \max_{Q' \in \Qs(Q)} T_{\sharp,Q'}f \right\} + C_T^0 \sum_{Q \in \Pc_k^*} \ave{|f|}_Q 1_Q,
  \end{align*}
  and hence
$$
    T_{\sharp,P_1}f  \le  C_T^0 \mathcal{A}_{\Ss_k}f + \max_{Q \in \Pc_k} T_{\sharp,Q}f  \le  C_T^0 \mathcal{A}_{\Ss_{k+1}}f + \max_{Q \in \Pc_{k+1}} T_{\sharp,Q} f.
$$
  by the following fact: if $Q' \subseteq Q$ then $T_{\sharp,Q'}f(x) \le T_{\sharp,Q}f(x)$.

  The a.e. pointwise bound \eqref{ineq:pointwiseDomin} follows from \eqref{induction:bound} in the following way. Let us fix $n \in \N$ and denote
  $\Tc_{n,k} \coloneqq \{Q \in \Pc_n \colon \ell(Q) \le 2^{-k} \ell(Q'):Q'\in  \Pc_{n}^*\}$ for every $k \in \N$, i.e. we get the collection
  $\Tc_{n,k}$ from $\Pc_n$ by taking away $k$ generations of size-maximal cubes.
  Then, since we have $\sum_{Q \in \Pc_n} |Q| < \infty$, we can choose a large integer $k_n \in \N$ such that $\sum_{Q \in \Tc_{n,k_n}} |Q| \le \varepsilon_d \sum_{Q \in \Pc_n} |Q|$.
  Since it holds that $\Pc_{n + k_n} \subseteq \Tc_{n,k_n} \cup \bigcup_{Q \in \Pc_n \setminus \Tc_{n,k_n}} \Qs(Q)$, we get
  \begin{align*}
    \sum_{Q \in \Pc_{n + k_n}} |Q| &\le \sum_{Q \in \Tc_{n,k_n}} |Q| + \sum_{Q \in \Pc_n \setminus \Tc_{n,k_n}} \sum_{Q' \in \Qs(Q)} |Q| \\
                                   &\overset{\ref{lem:recursion}}{\le} \varepsilon_d \sum_{Q \in \Pc_n} |Q| + \sum_{Q \in \Pc_n \setminus \Tc_{n,k_n}} \varepsilon_d|Q| \\
                                   &\le 2 \varepsilon_d \sum_{Q \in \Pc_n} |Q| \le \frac{1}{2} \sum_{Q \in \Pc_n} |Q|.
  \end{align*}
  Thus, we need to apply the recursion to the cubes of $\Pc_n$ only finitely many times to halve the mass of the cubes.
  In particular, $\lim_{n \to \infty} \sum_{Q \in \Pc_n} |Q| = 0$ and hence, for almost every $x \in P_1$ there exists an integer $n_x \in \N$ such that
  $x \notin \bigcup_{Q \in \Pc_{n_x}} Q$. This gives us the a.e. pointwise bound \eqref{ineq:pointwiseDomin}.

  \textbf{Part 3}: Sparseness of the collections $\D^\alpha \cap \bigcup_{n=0}^\infty \Ss_n$. Let us recall the notation $\Ss_* = \bigcup_{n=1}^\infty \Ss_n$.
  To prove the sparseness of the collections $\D^\alpha \cap \Ss_*$, we will prove a stronger claim. For this, we need some definitions and notation:
  \begin{enumerate}
    \item[1)] We say that a cube $Q' \in \Ss_*$ is a \emph{$\Ss_*$-child} of a cube $Q \in \Ss_*$ (denote $Q' \in \ch_{\Ss_*}(Q)$) if $Q' \subsetneq Q$ and there does not exist a cube $Q'' \in \Ss_*$ such that
              $Q' \subsetneq Q'' \subsetneq Q$.
    \item[2)] We denote $n(Q) \coloneqq \min\{n \in \N \colon Q \in \Ss_n\}$ for every $Q \in \Ss_*$.
  \end{enumerate}
  Recall that if $Q \in \Pc_{k+1}$, then $Q \in \Pc_k \setminus \Pc_k^*$ or $Q \in \Qs(R)$ for some $R \in \Pc_k^*$.
  \begin{enumerate}
    \item[3)] We denote $m(Q) \coloneqq \min\{m \in \N \colon Q \in \Pc_k \text{ for every } k = m,\ldots,n(Q)-1\}$.
    \item[4)] We say that a cube $Q \in \Ss_*$ is a \emph{$\sharp$-parent} of a cube $Q' \in \Ss_*$ (denote $Q = \widehat{Q'}$) if $Q \in \Pc_{m(Q')-1}^*$ and $Q' \in \Qs(Q)$.
              We note that $n(\widehat{Q'}) = m(Q')$ and that $\widehat{Q'}$ may not be unique but we fix some $\widehat{Q'}$ for each $Q'$.
  \end{enumerate}
  Our goal is to show that the collection $\Ss_*$ satisfies a sparseness-type condition $| \bigcup_{Q' \in \, \ch_{\Ss_*}(Q)} Q'| \le C \cdot \varepsilon_d |Q|$.

  The following property of $\sharp$-parents is crucial:
  \begin{equation}
    \label{properties:sharp_parents} \text{if } Q' \in \ch_{\Ss_*}(Q) \text{ and } \widehat{Q'} \neq Q, \quad \text{ then } \ell(\widehat{Q'}) \le \ell(Q) \text{ and } \widehat{Q'} \not\subset Q.
  \end{equation}
  The property $\ell(\widehat{Q'}) \le \ell(Q)$ follows from the simple observation that if $\ell(\widehat{Q'}) > \ell(Q)$, then also $n(Q) \ge n(\widehat{Q'}) + 1$.
  Since $Q' \in \Pc_{n(\widehat{Q'})}$, this would then imply that $Q',Q \in \Pc_n$ for some same $n$. Since $Q' \subsetneq Q$, this is impossible by the maximality of the
  collections $\Pc_n$. The property $\widehat{Q'} \not\subset Q$ follows directly from the maximality of the
  $\Ss_*$-children of $Q$: otherwise we would have $Q' \subsetneq \widehat{Q'} \subsetneq Q$.

  By the property \eqref{properties:sharp_parents} we know the following: if $Q' \in \ch_{\Ss_*}(Q)$, then either $Q' \in \Qs(Q)$ or $Q' \in \Qs(S)$ for some $S \in \Ss_*$ such that $\ell(S) \le \ell(Q)$ and
  $S \not\subset Q$. In either case, the following statement holds: there exists a cube $S \in \Ss_*$ such that $Q' \in \Qs(S)$, $\ell(S) = 2^{-n} \ell(Q)$ and $S \subset (1 + 2^{-n+1})Q \setminus (1 - 2^{-n+1})Q$
  for some $n \in \{0,1,\ldots\}$, where $cQ$ is the cube with same center point as $Q$ with side length $\ell(cQ) = c \cdot \ell(Q)$ and $cQ = \emptyset$ if $c \le 0$. Let us denote
  $$
    \Bc_n(Q) \coloneqq \{ S \in \Ss_* \colon \ell(S) = 2^{-n}\cdot\ell(Q), \ S \subset (1 + 2^{-n+1})Q \setminus (1 - 2^{-n+1})Q\}
  $$
  for every $n = 0,1,\ldots$. Then, since the cubes of the collection $\Bc_n(Q) \cap \D^\alpha$ are disjoint and contained in $(1 + 2^{-n+1})Q \setminus (1 - 2^{-n+1})Q$ for every
  $\alpha \in \{0,1,2\}^d$, we know that
  \begin{equation}
    \label{eq:measureSBn} \sum_{S \in \Bc_n(Q)} |S| \le 3^d \left| (1 + 2^{-n+1})Q \setminus (1 - 2^{-n+1})Q \right|
  \end{equation}
  for every $n = 0,1,\ldots$. Thus, if $Q' \in \ch_{\Ss_*}(Q)$, then $Q \in \Qs(S)$ for some $S \in \Bc_n(Q)$ and $n \in \{0,1,\ldots\}$. Hence, for every $Q \in \Ss$ we have
  \begin{align*}
    \Big|\bigcup_{\substack{Q'\in \Ss_* \\Q'\subsetneq Q}}Q'\Big| &\le \sum_{n=0}^\infty \, \sum_{S \in \Bc_n(Q)} \, \sum_{Q' \in \Qs(S)} |Q'|
                                                                \, \overset{\ref{lem:recursion}}{\le} \ \, \varepsilon_d \cdot \sum_{n=1}^\infty \sum_{S \in \Bc_n(Q)} |S| + \varepsilon_d \sum_{S \in \Bc_0(Q)} |S| \\
                                                                &\overset{\eqref{eq:measureSBn}}{\le} 3^d \cdot \varepsilon_d \sum_{n=1}^\infty \left| (1 + 2^{-n+1})Q \setminus (1 - 2^{-n+1})Q \right| + 3^{d} \cdot \varepsilon_d |3Q| \\
                                                                &\le 3^d \cdot \varepsilon_d |Q| \sum_{n=1}^\infty 2^{-n + 2} \cdot 2^d + 3^{2d} \cdot \varepsilon_d |Q|
                                                                \, \le \, 5\cdot 3^{2d} \cdot \varepsilon_d |Q|.
  \end{align*}
  In particular, if we choose $\varepsilon_d$ to be small enough, the collections $\Ss_* \cap \D^\alpha$ are sparse.

  Finally, we note that the collections $\Ss^\alpha \coloneqq \D^\alpha \cap (\Ss_* \cup \Ss_0)$ are sparse since adding the large cubes $P_2, P_3, \ldots$ to the corresponding collections
  $\Ss_* \cap \D^\alpha$ does not affect the sparseness of those collections. This completes the proof.
\end{proof}

\section{Rough homogeneous Calder\'on--Zygmund operators}

In this section, we prove Theorem \ref{thm:main2}. The techniques are originally developed in \cite{Duo-RdF} and \cite{Watson}, but we adapt them and modify them in order to get the dependence of the results in terms of the $A_2$ characteristic of the weight. We will use Theorem \ref{thm:main1} as a black box. Also a clever choice in the expression of our rough operators will refine such dependence.

To begin with, the proof of Theorem \ref{thm:main2} requires some ingredients that are shown in the subsequent subsections.

Recall the definition of the operator $T_{\Omega}$ given in the introduction. It can be written as
\begin{equation}
\label{eq:TOmega}
T_{\Omega}=\sum_{k\in \Z}T_kf=\sum_{k\in \Z}K_k\ast f, \quad K_k=\frac{\Omega(x')}{|x|^d}\chi_{2^k<|x|<2^{k+1}}.
\end{equation}

The following lemma is well-known, and the proof can be found in \cite{Duo-RdF}.

\begin{lemma}
\label{lem:estimateKk}
The following inequality holds
$$
|\widehat{K}_k(\xi)|\le c_d \|\Omega\|_{L^\infty}\min( |2^k\xi|^\alpha,|2^k\xi|^{-\alpha}),
$$
for some numerical $0<\alpha<1$ independent of $T_{\Omega}$ and $k$.
\end{lemma}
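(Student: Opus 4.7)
The plan is to reduce to the case $k = 0$ by scaling and then to prove separately the low- and high-frequency decay. The change of variables $y = 2^{-k}x$ gives $K_k(x) = 2^{-kd}K_0(2^{-k}x)$, hence $\widehat{K_k}(\xi) = \widehat{K_0}(2^k\xi)$, so it suffices to prove
\begin{equation*}
|\widehat{K_0}(\eta)| \le c_d\|\Omega\|_{L^\infty}\min(|\eta|,\,|\eta|^{-\alpha})
\end{equation*}
for some $\alpha\in(0,1)$ and then apply this with $\eta = 2^k\xi$; note that the linear bound at small $|\eta|$ trivially dominates $|\eta|^\alpha$ there, so this gives the statement.

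For the low-frequency bound I will exploit the cancellation of $\Omega$. Passing to polar coordinates, $\int_{\mathbb{S}^{d-1}}\Omega\,d\sigma = 0$ yields $\int_{1<|x|<2}\Omega(x')/|x|^d\,dx = 0$, which lets me replace $e^{-2\pi i x\cdot\eta}$ by $e^{-2\pi i x\cdot\eta}-1$ in the defining integral of $\widehat{K_0}$. Since $|e^{-2\pi i x\cdot\eta}-1|\le 2\pi|x||\eta|\le 4\pi|\eta|$ on the annulus $1<|x|<2$, and the annulus has bounded volume, the estimate $|\widehat{K_0}(\eta)|\le c_d\|\Omega\|_{L^\infty}|\eta|$ follows immediately.

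For the high-frequency bound I will rewrite the Fourier transform in polar coordinates,
\begin{equation*}
\widehat{K_0}(\eta) = \int_{\mathbb{S}^{d-1}}\Omega(\omega)\int_1^2 e^{-2\pi i r\,\omega\cdot\eta}\,\frac{dr}{r}\,d\sigma(\omega),
\end{equation*}
and handle the inner radial oscillatory integral by one integration by parts together with the trivial $O(1)$ bound, obtaining $\bigl|\int_1^2 e^{-2\pi i r\,\omega\cdot\eta}\,dr/r\bigr|\le c\min(1,|\omega\cdot\eta|^{-1})$. The key geometric fact I will use to integrate this against $\Omega\,d\sigma$ is that the spherical measure of the slab $\{\omega\in\mathbb{S}^{d-1}:|\omega\cdot\eta|<\tau|\eta|\}$ is $O(\tau)$ uniformly in $\eta$. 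Splitting the angular integral at $|\omega\cdot\eta|=1$ then yields $|\widehat{K_0}(\eta)|\le c_d\|\Omega\|_{L^\infty}|\eta|^{-1}\log(2+|\eta|)$, which is dominated by $c_d\|\Omega\|_{L^\infty}|\eta|^{-\alpha}$ for any fixed $\alpha\in(0,1)$, provided $|\eta|\ge 1$.

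The only technical subtlety is the mild logarithmic factor arising in the angular integral, but since the lemma only requires \emph{some} $\alpha\in(0,1)$, this loss is absorbed into an arbitrary power strictly smaller than one without affecting the conclusion.
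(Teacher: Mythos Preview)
Your proof is correct and follows the standard argument; the paper itself does not give a proof but simply cites Duoandikoetxea--Rubio de Francia \cite{Duo-RdF}, where essentially the same approach (cancellation of $\Omega$ for small frequencies, oscillatory integral/van der Corput-type estimates for large frequencies) is carried out. Your treatment is in fact slightly more explicit than what is written there, and the logarithmic loss you incur is exactly the expected one, correctly absorbed into any exponent $\alpha<1$.
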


\begin{remark}
The estimates in Lemma \ref{lem:estimateKk} are still valid for $\Omega \in L^{p}(\Sp^{d-1})$, $p>1$, or even $\Omega \in L\log L(\Sp^{d-1})$. Nevertheless, in order to obtain weighted estimates we will require $\Omega \in L^{\infty}(\Sp^{d-1})$.
\end{remark}

We consider the following partition of unity. Let $\phi\in C_c^{\infty}(\R^d)$ be such that $\supp \phi\subset\{x:|x|\le \frac{1}{100}\}$  and $\int\phi\,dx=1$, and so that $\widehat{\phi}\in \mathcal{S}(\R^d)$. Let us also define $\psi$ by $\widehat{\psi}(\xi)=\widehat{\phi}(\xi)-\widehat{\phi}(2\xi)$. Then, with this choice of $\psi$, it follows that $\int\psi\,dx=0$. We write $\phi_j(x)=\frac{1}{2^{jd}}\phi\big(\frac{x}{2^j}\big)$, and $\psi_j(x)=\frac{1}{2^{jd}}\psi\big(\frac{x}{2^j}\big)$.

We now define the partial sum operators $S_j$ by $S_j(f)=f\ast \phi_j$. Their differences are given by
\begin{equation}
\label{eq:difference}
S_j(f)-S_{j+1}(f)=f\ast\psi_j.
\end{equation}
Since $S_j f\to 0$ as $j\to-\infty$, for any sequence of integer numbers $\{N(j)\}_{j=0}^{\infty}$, with $0=N(0)<N(1)<\cdots<N(j)\to\infty$, we have the identity
$$
T_k=T_kS_k+\sum_{j=1}^{\infty}T_k(S_{k-N(j)}-S_{k-N(j-1)}).
$$
In this way, $T_{\Omega}=\sum_{j=0}^\infty\widetilde{T}_j=\sum_{j=0}^{\infty}\widetilde{T}_j^N$, where
\begin{equation}
\label{eq:T0}
\widetilde{T}_0:=\widetilde{T}_0^N:=\sum_{k\in\Z}T_kS_k,
\end{equation}
and, for $j\ge1$,
\begin{equation}
\begin{split}\label{eq:Tj}
\widetilde{T}_j &:=\sum_{k\in\Z}T_k(S_{k-j}-S_{k-(j-1)}), \\
  \widetilde{T}_j^N &:=\sum_{k\in\Z}T_k(S_{k-N(j)}-S_{k-N(j-1)})
  =\sum_{i=N(j-1)+1}^{N(j)}\widetilde{T}_i.
\end{split}
\end{equation}

\subsection{$L^2$ estimates for $\widetilde{T}_j^N$}

\begin{lemma}[Unweighted $L^2$ estimates for $\widetilde{T}_j^N$]
\label{lem:L2}
Let $\widetilde{T}_j$ and $\widetilde{T}_j^N$ be the operators as in \eqref{eq:T0} and \eqref{eq:Tj}. Then we have
\begin{align*}
  \|\widetilde{T}_jf\|_{L^2} &\le c_d \|\Omega\|_{L^\infty}2^{-\alpha j}\|f\|_{L^2}, \\
  \|\widetilde{T}_j^Nf\|_{L^2} &\le c_d \|\Omega\|_{L^\infty}2^{-\alpha N(j-1)}\|f\|_{L^2},
\end{align*}
for some numerical $0<\alpha<1$ independent of $T_{\Omega}$ and $j$.
\end{lemma}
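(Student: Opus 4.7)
The plan is to reduce both estimates to an $L^\infty$ bound on the Fourier multiplier of $\widetilde{T}_j$, via Plancherel's theorem. Since $S_{k-j}-S_{k-j+1}$ is convolution with $\psi_{k-j}$ (which follows from $\widehat{\psi_j}(\xi)=\widehat{\phi}(2^j\xi)-\widehat{\phi}(2^{j+1}\xi)$), we have
\begin{equation*}
\widehat{\widetilde{T}_jf}(\xi)=m_j(\xi)\widehat{f}(\xi),\qquad m_j(\xi):=\sum_{k\in\Z}\widehat{K}_k(\xi)\,\widehat{\psi}(2^{k-j}\xi),
\end{equation*}
so it suffices to show $\|m_j\|_{L^\infty}\le c_d\|\Omega\|_{L^\infty}2^{-\alpha j}$. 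The two inputs are: Lemma \ref{lem:estimateKk}, which gives $|\widehat{K}_k(\xi)|\le c_d\|\Omega\|_{L^\infty}\min(|2^k\xi|^\alpha,|2^k\xi|^{-\alpha})$, and the fact that $\widehat{\psi}$ is Schwartz with $\widehat{\psi}(0)=\int\psi=0$, which yields
\begin{equation*}
|\widehat{\psi}(\eta)|\le c\min(|\eta|,|\eta|^{-1})
\end{equation*}
(any power in the second slot would do).

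I would then fix $\xi\neq 0$ and parametrize the sum by $t:=\log_2(2^k|\xi|)\in\log_2|\xi|+\Z$, so that $|\widehat{K}_k(\xi)|\le c\|\Omega\|_\infty\min(2^{t\alpha},2^{-t\alpha})$ and $|\widehat{\psi}(2^{k-j}\xi)|\le c\min(2^{t-j},2^{-(t-j)})$. Split the sum into three regions and use that each is a geometric series dominated by its largest term:
\begin{enumerate}
\item[(i)] $t\le 0$: product bounded by $2^{t\alpha}\cdot 2^{t-j}=2^{-j}2^{t(1+\alpha)}$, summing to $\lesssim 2^{-j}$.
\item[(ii)] $0\le t\le j$: product bounded by $2^{-t\alpha}\cdot 2^{t-j}=2^{-j}2^{t(1-\alpha)}$, summing to $\lesssim 2^{-j}\cdot 2^{j(1-\alpha)}=2^{-\alpha j}$.
\item[(iii)] $t\ge j$: product bounded by $2^{-t\alpha}\cdot 2^{-(t-j)}=2^{j}2^{-t(1+\alpha)}$, summing to $\lesssim 2^{j}\cdot 2^{-j(1+\alpha)}=2^{-\alpha j}$.
\end{enumerate}
Adding the three contributions gives $|m_j(\xi)|\le c_d\|\Omega\|_{L^\infty}2^{-\alpha j}$ uniformly in $\xi$, which by Plancherel proves the first estimate.

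For $\widetilde{T}_j^N$, the identity in \eqref{eq:Tj} writes $\widetilde{T}_j^N=\sum_{i=N(j-1)+1}^{N(j)}\widetilde{T}_i$. Then by the triangle inequality and the first estimate,
\begin{equation*}
\|\widetilde{T}_j^Nf\|_{L^2}\le\sum_{i=N(j-1)+1}^{N(j)}\|\widetilde{T}_if\|_{L^2}\le c_d\|\Omega\|_{L^\infty}\|f\|_{L^2}\sum_{i=N(j-1)+1}^{\infty}2^{-\alpha i}\le c_d'\|\Omega\|_{L^\infty}2^{-\alpha N(j-1)}\|f\|_{L^2}.
\end{equation*}
Alternatively, the same Plancherel/case-analysis argument applies directly to the multiplier of $\widetilde{T}_j^N$, whose $\widehat{\psi}$-factor is replaced by $\widehat{\phi}(2^{k-N(j)}\xi)-\widehat{\phi}(2^{k-N(j-1)}\xi)$; this difference still vanishes at $\xi=0$ and is Schwartz. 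The only mildly tricky point is step (ii), where one must actually use the oscillation of $\widehat{\psi}$ at the origin (i.e.\ $\widehat{\psi}(\eta)=O(|\eta|)$ rather than $O(1)$) to extract the decay factor $2^{-j}$; using only $|\widehat{\psi}|\le c$ would give a bound independent of $j$. No substantive obstacle beyond this bookkeeping arises.
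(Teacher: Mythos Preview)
Your argument is correct and follows essentially the same route as the paper: reduce to a pointwise bound on the Fourier multiplier via Plancherel, combine Lemma~\ref{lem:estimateKk} with the vanishing of $\widehat{\psi}$ at the origin, and sum a geometric series (both for $m_j$ and then for $\widetilde{T}_j^N=\sum_{i=N(j-1)+1}^{N(j)}\widetilde{T}_i$). Two small remarks: the paper gets away with only $|\widehat{\psi}(\eta)|\le C\min(|\eta|,1)$ and a two-region split (your region (iii) decay of $\widehat{\psi}$ is not needed, since the $|2^k\xi|^{-\alpha}$ bound on $\widehat{K}_k$ already handles large $t$); and your write-up tacitly assumes $j\ge 1$, whereas $\widetilde{T}_0=\widetilde{T}_0^N$ has multiplier $\sum_k\widehat{K}_k(\xi)\widehat{\phi}(2^k\xi)$ with $\widehat{\phi}(0)=1$, so the $j=0$ case needs the separate (trivial) argument the paper gives, using both sides of the $\min$ in Lemma~\ref{lem:estimateKk}.
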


\begin{proof}
Let us first consider $j\geq 1$.
From \eqref{eq:TOmega}, \eqref{eq:Tj} and \eqref{eq:difference}, we write
$$
\widehat{\widetilde{T}_jf}(\xi)=\sum_{k\in\Z}\widehat{K}_k(\xi)\widehat{\psi}(2^{k-j}\xi)\widehat{f}(\xi)=:m_{j}(\xi)\widehat{f}(\xi).
$$
 We will obtain a pointwise estimate for $m_{j}(\xi)$. Since $\widehat\psi\in\mathcal{S}(\R^d)$ and $\widehat{\psi}(0)=0$, we have $|\widehat{\psi}(\xi)|\le C\min(|\xi|,1)$, and hence
 \begin{equation}
 \label{eq:estimateHatpsi}
|\widehat{\psi}(2^{k-j}\xi)|\le C\min( |2^{k-j}\xi|,1).
 \end{equation}
Thus, \eqref{eq:estimateHatpsi} and Lemma \ref{lem:estimateKk} imply
\begin{equation}
\label{eq:estimatesMulti}
|\widehat{K}_k(\xi)||\widehat{\psi}(2^{k-N(j)}\xi)| \le c_d \|\Omega\|_{L^\infty}|2^k\xi|^{-\alpha}
  \min(|2^{k-j}\xi|,1),
\end{equation}
and hence
\begin{align*}
  |m_j(\xi)| &\leq c_d\|\Omega\|_{L^\infty}\Big(\sum_{k:2^k|\xi|\leq 2^j} 2^{-j}|2^k\xi|^{1-\alpha}
     +\sum_{k:2^k|\xi|\geq 2^j}|2^k\xi|^{-\alpha}\Big) \\
   &\leq c_d\|\Omega\|_{L^\infty}\big(2^{-j} 2^{j(1-\alpha)}+2^{-j\alpha})
   \leq c_d\|\Omega\|_{L^\infty}2^{-j\alpha}.
\end{align*}
The required $L^2$ inequality for $\widetilde{T}_j f$ then follows by Plancherel. To estimate $\widetilde{T}_j^N f$, we simply need to sum the geometric series $\sum_{i=N(j-1)+1}^{N(j)}2^{-\alpha i}$.

For $j=0$, we have $\widehat\phi(2^k\xi)$ in place of $\widehat\psi(2^{k-j}\xi)$ above. Then, in place of \eqref{eq:estimateHatpsi} and \eqref{eq:estimatesMulti}, we use simply $|\widehat\phi(2^k\xi)|\leq C$ and
\begin{equation*}
  |\widehat{K}_k(\xi)||\widehat{\phi}(2^{k}\xi)| \le c_d \|\Omega\|_{L^\infty}\min(|2^k\xi|,|2^k\xi|)^{-\alpha},
\end{equation*}
so that
\begin{equation*}
\begin{split}
  |m_0(\xi)|
  &:=\Big|\sum_{k\in\Z}\widehat{K}_k(\xi)||\widehat{\phi}(2^{k}\xi)\Big| \\
  &\leq c_d\|\Omega\|_{L^\infty}\Big(\sum_{k:2^k|\xi|\leq 1} |2^k\xi|^{\alpha}+\sum_{k:2^k|\xi|\geq 1}|2^k\xi|^{-\alpha}\Big)
    \leq c_d\|\Omega\|_{L^\infty}.
\end{split}
\end{equation*}
Again, Plancherel completes the $L^2$ estimate for $\tilde{T}_0 f=\tilde{T}_0^N f$.
\end{proof}

\subsection{Calder\'on--Zygmund theory of $\widetilde{T}_j^N$}

\begin{lemma}
\label{lem:modulus}
The operator $\widetilde{T}_j^N$ is a Calder\'on--Zygmund operator with
\begin{equation*}
  C_j^N:=C_{\widetilde{T}_j^N}\leq c_d\|\Omega\|_{L^\infty},\qquad
\omega_j^N(t):=\omega_{\widetilde{T}_j^N}(t)\le  c_d\|\Omega\|_{L^\infty} \min(1 , 2^{N(j)}t),
\end{equation*}
which satisfies
\begin{equation*}
   \int_0^1\omega_j^N(t)\frac{dt}{t}\leq c_d\|\Omega\|_{L^\infty}(1+N(j)).
\end{equation*}
\end{lemma}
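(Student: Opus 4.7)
The plan is to exploit the fact that $\widetilde{T}_j^N$ is a convolution operator with kernel
$$
K_j^N(x) = \sum_{k\in\Z} K_k * (\phi_{k-N(j)} - \phi_{k-N(j-1)})(x),
$$
and to estimate this kernel directly, using the size of $K_k$ and the smoothness of the bumps $\phi_l$. I would first record the routine ingredients: $K_k$ is supported in the annulus $\{2^k<|y|<2^{k+1}\}$ with $\|K_k\|_{L^\infty}\le \|\Omega\|_{L^\infty}/2^{kd}$, while $\|\phi_l\|_{L^1}=1$, $\|\nabla\phi_l\|_{L^1}\le c/2^l$, and $\phi_l$ is supported in $\{|y|\le 2^l/100\}$. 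Since $N(j-1)\ge 0$, both bumps in the difference live at scales $\le 2^k/100$, so each summand is supported in a slight enlargement of the annulus $\{|x|\sim 2^k\}$. Consequently, at a point $x$ with $|x|\sim 2^m$ only $O(1)$ indices $k$ contribute to the sum defining $K_j^N(x)$.

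For the size estimate, Young's inequality gives $\|K_k*(\phi_{k-N(j)}-\phi_{k-N(j-1)})\|_{L^\infty}\le 2\|K_k\|_{L^\infty}\le 2\|\Omega\|_{L^\infty}/2^{kd}$; summing the $O(1)$ contributing terms yields $|K_j^N(x)|\le c_d\|\Omega\|_{L^\infty}/|x|^d$, which gives $C_j^N\le c_d\|\Omega\|_{L^\infty}$. For the smoothness estimate I would shift the derivative onto the bump, $\nabla(K_k*\phi_l)=K_k*\nabla\phi_l$, so that Young's inequality produces $\|\nabla(K_k*\phi_l)\|_{L^\infty}\le c\|\Omega\|_{L^\infty}/(2^{kd}\cdot 2^l)$. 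The worst scale is $l=k-N(j)$, giving $c\|\Omega\|_{L^\infty}2^{N(j)}/2^{k(d+1)}$, and summing over the $O(1)$ relevant $k$ yields $|\nabla K_j^N(x)|\le c_d\|\Omega\|_{L^\infty}2^{N(j)}/|x|^{d+1}$. The mean-value theorem then gives, for $|x-y|>2|x-x'|$,
$$
|K_j^N(x-y)-K_j^N(x'-y)|\le c_d\|\Omega\|_{L^\infty}\,2^{N(j)}\,\frac{|x-x'|}{|x-y|^{d+1}},
$$
and combining this with the trivial size bound $c_d\|\Omega\|_{L^\infty}/|x-y|^d$ yields $\omega_j^N(t)\le c_d\|\Omega\|_{L^\infty}\min(1,2^{N(j)}t)$ as claimed. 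The symmetric inequality $|K_j^N(y-x)-K_j^N(y-x')|$ is identical, since $K_j^N$ is translation invariant.

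The Dini integral is then elementary: split at $t=2^{-N(j)}$, so that $\int_0^{2^{-N(j)}}2^{N(j)}t\,dt/t=1$ and $\int_{2^{-N(j)}}^{1}dt/t=N(j)\log 2$, giving the stated bound $c_d\|\Omega\|_{L^\infty}(1+N(j))$. The main technical point is simply to match the two competing bounds (size vs.\ gradient) to produce the precise $\min(1,2^{N(j)}t)$ modulus; it is this sharp form that keeps the Dini norm only \emph{linear} in $N(j)$, which will be crucial for summing the decomposition $T_\Omega=\sum_j\widetilde{T}_j^N$ against the $A_2$ bound from Theorem~\ref{thm:main1}, whose dependence on $\|\omega\|_{\Dini}$ is also linear.
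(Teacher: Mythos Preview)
Your proof is correct and follows essentially the same route as the paper: identify the convolution kernel $K_j^N=\sum_k K_k*(\phi_{k-N(j)}-\phi_{k-N(j-1)})$, use the small support of $\phi_l$ to localize each summand to an annulus $\{|x|\sim 2^k\}$ so that only $O(1)$ terms contribute at each point, then combine the size bound with the gradient bound (derivative thrown on the bump) via the mean-value theorem to get the modulus $\min(1,2^{N(j)}t)$, and finally split the Dini integral at $t=2^{-N(j)}$. The only cosmetic difference is that you package the pointwise estimates via Young's inequality ($L^\infty*L^1$) whereas the paper writes out the integrals explicitly with indicator functions; the content is the same.
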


\begin{proof}
We have already proved in Lemma \ref{lem:L2} that $\widetilde{T}_j^N$ is a bounded operator in $L^2$.
Recall the definition of $\widetilde{T}_j^N$ given in \eqref{eq:Tj}. In order to get the required estimates for the kernel of $\widetilde{T}_j^N$, we first study the kernel of each $T_kS_{k-N(j)}$. Let $x\in \R^n$. Since $\supp \phi\subset\{x:|x|\le \frac{1}{100}\}$, and passing to polar coordinates, then
\begin{align*}
|K_k\ast\phi_{k-N(j)}(x)|&=\bigg|\int_{\R^n}\frac{\Omega(y')}{|y|^d}1_{2^k<|y|<2^{k+1}}2^{-(k-N(j))d}\phi\Big(\frac{x-y}{2^{k-N(j)}}\Big)\,dy\bigg|\\
&\le c_d  \|\Omega\|_{L^\infty}\frac{1}{|x|^d}1_{2^{k-1}<|x|<3\cdot2^{k}} \int_{\R^n}2^{-(k-N(j))d}\bigg|\phi\Big(\frac{x-y}{2^{k-N(j)}}\Big)\bigg|\,dy\\
&\le c_d \|\Omega\|_{L^\infty}\frac{1}{|x|^d}1_{2^{k-1}<|x|<3\cdot2^{k}},
\end{align*}
so that
\begin{equation}
\label{eq:1st}
\sum_{k\in \Z}|K_k\ast\phi_{k-N(j)}(x)|\le c_d \|\Omega\|_{L^\infty}\sum_{k\in \Z}\frac{1}{|x|^d}1_{2^{k-1}<|x|<3\cdot2^{k}}\le c_d \frac{ \|\Omega\|_{L^\infty}}{|x|^d}.
\end{equation}
On the other hand, we compute the gradient. Again by taking into account the support of $\phi$ and passing to polar coordinates
\begin{align*}
|\nabla (K_k\ast&\phi_{k-N(j)})(x)|=\bigg|\int_{\R^n}\frac{\Omega(y')}{|y|^d}1_{2^k<|y|<2^{k+1}}2^{-(k-N(j))(d+1)}\nabla\phi\Big(\frac{x-y}{2^{k-N(j)}}\Big)\,dy\bigg|\\
&\le c_d  \|\Omega\|_{L^\infty}\frac{1}{|x|^d}1_{2^{k-1}<|x|<3\cdot2^{k}} \int_{\R^n}2^{-(k-N(j))(d+1)}\bigg|\nabla\phi\Big(\frac{x-y}{2^{k-N(j)}}\Big)\bigg|\,dy\\
&\le c_d \|\Omega\|_{L^\infty}\frac{1}{|x|^{d}}1_{2^{k-1}<|x|<3\cdot2^{k}}\frac{1}{2^{k-N(j)}}\\
&\le c_d  \|\Omega\|_{L^\infty}\frac{1}{|x|^{d+1}}1_{2^{k-1}<|x|<3\cdot2^{k}}2^{N(j)},
\end{align*}
thus
\begin{equation}
\begin{split}
\label{eq:2nd}
  \sum_{k\in \Z}|\nabla (K_k\ast\phi_{k-N(j)})(x)|
  &\le c_d \|\Omega\|_{L^\infty}\sum_{k\in \Z}\frac{1_{2^{k-1}<|x|<3\cdot2^{k}}}{|x|^{d+1}}2^{N(j)} \\
  &\le c_d \|\Omega\|_{L^\infty}\frac{2^{N(j)}}{|x|^{d+1}}.
\end{split}
\end{equation}
From the triangle inequality and $N(j-1)<N(j)$ it follows that the kernel $K_j^N:=\sum_{k\in\Z}K_k*(\phi_{k-N(j)}-\phi_{k-N(j-1)})$ of $\widetilde{T}_j^N$ satisfies the same estimate \eqref{eq:1st} and \eqref{eq:2nd}, i.e.
\begin{equation*}
\begin{split}
  |K_j^N(x,y)|=|K_j^N(x-y)| &\leq c_d\frac{\|\Omega\|_{L^\infty}}{|x-y|^d}, \\
 |\nabla K_j^N(x-y)| &\leq c_d\frac{\|\Omega \|_{L^\infty} }{|x-y|^{d+1}} 2^{N(j)}.
\end{split}
\end{equation*}
(For $j=0$, the subtraction is not even needed.) The first bound above is already the required estimate for $C_j^N$.
On the other hand, by the gradient estimate, for $|x-x'|\leq\frac12|x-y|$ we have 
\begin{equation*}
\begin{split}
\label{eq:smooth1}
 |K^{N}_j(x,y)-K^{N}_j(x',y)|&=|K^{N}_j(x-y)-K^{N}_j(x'-y)|\\
  &\leq|x-x'|\sup_{z\in[x,x']}|\nabla K_j^N(z-y)| \\
  &\leq|x-x'|\sup_{z\in[x,x']} c_d\frac{\|\Omega\|_{L^\infty}}{|z-y|^{d+1}}2^{N(j)} \\
  &\leq c_d\frac{\|\Omega\|_{L^\infty} }{|x-y|^d }2^{N(j)} \frac{|x-x'|}{|x-y|}.
\end{split}
\end{equation*}
From the triangle inequality we also have the easy bound
\begin{equation*}
   |K^{N}_j(x,y)-K^{N}_j(x',y)|\leq c_d\frac{\|\Omega\|_{L^\infty} }{|x-y|^d },
\end{equation*}
and combining the two estimates and symmetry,
$$
|K^{N}_j(x,y)-K^{N}_j(x',y)|+|K^{N}_j(y,x)-K^{N}_j(y,x')|\le c_d \,\omega_j^N\bigg(\frac{|x-x'|}{|x-y|}\bigg)\frac{1}{|x-y|^d},
$$
where
$$
\omega_j^N(t)\le c_d \|\Omega\|_{L^\infty} \min(1,2^{N(j)}t).
$$
The Dini norm of this function is estimated as
\begin{equation*}
\begin{split}
\int_0^1\omega_j(t)\frac{dt}{t}&\le c_d\|\Omega\|_{L^\infty} \Big(\int_0^{2^{-N(j)}}2^{N(j)}t
\frac{dt}{t}+\int_{2^{-N(j)}}^1\frac{dt}{t}\Big)\\
&=c_d\|\Omega\|_{L^\infty}(1+ \log 2^{N(j)})\le c_d\|\Omega\|_{L^\infty}(1+N(j)).
\end{split}
\end{equation*}
\end{proof}

In the following, we will prove a quantitative $L^p$ weighted inequality for the operators $\widetilde{T}_j^N$. 

\begin{lemma}
\label{lem:L2weighted}
Let $\widetilde{T}_j^N$ be the operators as in \eqref{eq:T0} and \eqref{eq:Tj}.  Let $1<p<\infty$. Then, for all $w\in A_p$, we have
$$
\|\widetilde{T}_j^N f\|_{L^p(w)}\le c_{d,p}\|\Omega\|_{L^\infty}\big(1+N(j)\big) \{w\}_{A_p}\|f\|_{L^p(w)},
$$
where $\alpha$ is a numerical constant independent of $T_\Omega$, $j$ and the function $N(\cdot)$.
\end{lemma}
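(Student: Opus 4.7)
The plan is to feed the Calder\'on--Zygmund data of $\widetilde{T}_j^N$ that were computed in Lemma \ref{lem:modulus}, together with the unweighted $L^2$ bound from Lemma \ref{lem:L2}, directly into the quantitative weighted bound of Theorem \ref{thm:main1}. The work is essentially done once the constants are collected, so this lemma is really a corollary of what was proved above.

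First I would collect the ingredients. From Lemma \ref{lem:L2} I use the trivial consequence $\|\widetilde{T}_j^N\|_{L^2 \to L^2} \le c_d \|\Omega\|_{L^\infty}$ (bounding $2^{-\alpha N(j-1)} \le 1$; the sharper geometric decay is not needed here and will only be used later, when balancing the series for $T_\Omega$). From Lemma \ref{lem:modulus} I use the kernel constant $C_{\widetilde{T}_j^N} \le c_d\|\Omega\|_{L^\infty}$ together with the Dini-norm estimate
$$
\|\omega_j^N\|_{\Dini} = \int_0^1 \omega_j^N(t)\,\frac{dt}{t} \le c_d\|\Omega\|_{L^\infty}(1 + N(j)),
$$
which in particular confirms that $\omega_j^N$ satisfies the Dini condition \eqref{eq:Dini}.

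Second, Theorem \ref{thm:main1} applied to $\widetilde{T}_j^N$ (with its modulus $\omega_j^N$) yields
$$
\|(\widetilde{T}_j^N)_\sharp\|_{L^p(w)\to L^p(w)}
\le c_{d,p}\bigl(\|\widetilde{T}_j^N\|_{L^2\to L^2} + C_{\widetilde{T}_j^N} + \|\omega_j^N\|_{\Dini}\bigr)\{w\}_{A_p}
\le c_{d,p}\|\Omega\|_{L^\infty}(1+N(j))\{w\}_{A_p}.
$$
To pass from $(\widetilde{T}_j^N)_\sharp$ to $\widetilde{T}_j^N$ itself I invoke Remark \ref{rem:CZO}, which gives the pointwise domination $|\widetilde{T}_j^N f(x)| \le (\widetilde{T}_j^N)_\sharp f(x) + \|\widetilde{T}_j^N\|_{L^2\to L^2}|f(x)|$. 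The second summand contributes to $L^p(w)$ a factor of at most $c_d\|\Omega\|_{L^\infty}\|f\|_{L^p(w)}$, which is absorbed into the right-hand side of the displayed bound.

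There is no substantive obstacle: the lemma is a direct plug-in of the previously established Calder\'on--Zygmund constants into the quantitative machinery of Theorem \ref{thm:main1}. The important qualitative feature — the one that will be decisive when summing over $j$ to recover $T_\Omega$ — is that the growth here is only \emph{linear} in $N(j)$, traceable entirely to the $\log$-dependence coming from integrating $\omega_j^N$ against $dt/t$. This linear growth will be balanced against the exponential decay $2^{-\alpha N(j-1)}$ from the unweighted $L^2$ estimate by interpolation in the next step of the proof of Theorem \ref{thm:main2}.
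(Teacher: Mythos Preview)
Your proof is correct and follows essentially the same route as the paper: apply Theorem~\ref{thm:main1} (together with Remark~\ref{rem:CZO} to pass from the maximal truncation to the operator) using the $L^2$ bound from Lemma~\ref{lem:L2} and the kernel and Dini-norm estimates from Lemma~\ref{lem:modulus}. The paper's argument is the same three-line plug-in, and your observation that only the crude bound $2^{-\alpha N(j-1)}\le 1$ is needed here is accurate.
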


\begin{proof}
By Theorem \ref{thm:main1} and Remark \ref{rem:CZO} for the first inequality below, and Lemma \ref{lem:L2} and Lemma \ref{lem:modulus} for the second one, we deduce that
 \begin{align*}
 \|\widetilde{T}_j^N\|_{L^p(w)}
 &\le  c_{d,p}\big(\|\widetilde{T}_j^N\|_{L^2\to L^2}+C_j^N+\|\omega_j^N\|_{\Dini}\big) \{w\}_{A_p}\|f\|_{L^p(w)}\\
 &\le c_{d,p}\big(2^{-\alpha N(j)}\|\Omega\|_{L^\infty}+\|\Omega\|_{L^\infty}+\|\Omega\|_{L^\infty}(1+N(j))\big)
 \{w\}_{A_p}\|f\|_{L^p(w)}\\
 &\le c_{d,p}\|\Omega\|_{L^\infty}\big(1+ N(j)\big)\{w\}_{A_p}\|f\|_{L^p(w)}.
 \end{align*}
\end{proof}

Finally, we will show that, from the unweighted $L^2$ estimate in Lemma \ref{lem:L2} and the unweighted $L^p$ estimate in Lemma \ref{lem:L2weighted} (i.e., the weighted estimate with $w(x)\equiv 1$), we can infer a good quantitative unweighted $L^p$ estimate for $\widetilde{T}_j$.

\begin{lemma}
\label{lem:Lpunweighted}
Let $\widetilde{T}_j^N$ be the operators as in \eqref{eq:T0} and \eqref{eq:Tj}.  Let $1<p<\infty$. Then,
$$
\|\widetilde{T}_j^N f\|_{L^p}\le c_{d,p}\|\Omega\|_{L^\infty}2^{-\alpha_p N(j-1)}(1+N(j)) \|f\|_{L^p},
$$
for some constant $\alpha_p$ independent of $T_\Omega$, $j$ and the function $N(\cdot)$.
\end{lemma}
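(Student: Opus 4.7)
The two ingredients on hand are the $L^2$ bound from Lemma~\ref{lem:L2}, which carries the decay factor $2^{-\alpha N(j-1)}$ but no growth, and Lemma~\ref{lem:L2weighted} specialized to $w\equiv 1$, which gives an unweighted $L^{p_0}$ bound $\|\widetilde T_j^N f\|_{L^{p_0}}\le c_{d,p_0}\|\Omega\|_{L^\infty}(1+N(j))\|f\|_{L^{p_0}}$ for any $1<p_0<\infty$, but loses the decay. The strategy is simply to interpolate these two estimates by Riesz--Thorin. Intuitively, the $L^2$ factor contributes the geometric decay and the $L^{p_0}$ factor contributes the logarithmic growth, while interpolation mixes them in a controlled way.

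Given $1<p<\infty$ with $p\neq 2$, I would pick an auxiliary exponent $p_0$ on the opposite side of $2$ from $p$ (e.g.\ $p_0=2p$ if $p>2$, and $p_0=(1+p)/2$ if $1<p<2$) and let $\theta\in(0,1)$ be determined by
$$
 \frac{1}{p}=\frac{1-\theta}{2}+\frac{\theta}{p_0}.
$$
Riesz--Thorin then yields
$$
 \|\widetilde T_j^N\|_{L^p\to L^p}\le \|\widetilde T_j^N\|_{L^2\to L^2}^{\,1-\theta}\,\|\widetilde T_j^N\|_{L^{p_0}\to L^{p_0}}^{\,\theta}.
$$
Inserting the two prior bounds gives, up to a constant depending only on $d$ and $p$,
$$
 \|\widetilde T_j^N\|_{L^p\to L^p}\le c_{d,p}\|\Omega\|_{L^\infty}\,2^{-(1-\theta)\alpha N(j-1)}\,(1+N(j))^{\theta}.
$$
Setting $\alpha_p:=(1-\theta)\alpha>0$ and using the trivial estimate $(1+N(j))^{\theta}\le 1+N(j)$ (since $\theta\le 1$) produces the claimed inequality. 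The case $p=2$ is already covered by Lemma~\ref{lem:L2} alone, with $\alpha_2=\alpha$ and the factor $(1+N(j))$ absorbed harmlessly.

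The only point requiring any thought is verifying that $\theta$ stays bounded away from $1$ (equivalently $\alpha_p>0$) uniformly in $j$ and in the choice function $N(\cdot)$, which is immediate because $\theta$ depends only on $p$ and $p_0$. I do not expect a real obstacle here, since both input bounds are already in place and Riesz--Thorin applies to the linear operators $\widetilde T_j^N$ directly; the exercise is essentially bookkeeping of the exponents.
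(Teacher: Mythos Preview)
Your proposal is correct and follows essentially the same route as the paper: interpolate the $L^2$ bound of Lemma~\ref{lem:L2} (carrying the decay $2^{-\alpha N(j-1)}$) with the unweighted $L^{p_0}$ bound obtained from Lemma~\ref{lem:L2weighted} at $w\equiv 1$ (carrying only the factor $1+N(j)$), then set $\alpha_p=(1-\theta)\alpha$. The paper's choices are $p_0=2p$ for $p>2$ and $p_0=\tfrac{2p}{1+p}$ for $p<2$; your $p_0=(1+p)/2$ in the second case differs but works equally well, since all that matters is $1<p_0<p<2$ so that $\theta\in(0,1)$ depends only on $p$. (Your phrase ``on the opposite side of $2$ from $p$'' is a slip---you mean on the far side of $p$ from $2$---but your explicit choices are fine.)
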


\begin{proof}
First, assume that $p>2$, and take $q:=2p$, so that $2<p<q$. We have that $\frac{1}{p}=\frac{1-\theta}{2}+\frac{\theta}{q}$, for $0<\theta:=\frac{p-2}{p-1}<1$. Then, from Lemma \ref{lem:L2} and Lemma \ref{lem:L2weighted} with $w(x)\equiv1$, by complex interpolation, we get
\begin{equation*}
\begin{split}
 \|\widetilde{T}_j^N\|_{L^p\to L^p}
 &\leq \|\widetilde{T}_j^N\|_{L^2\to L^2}^{1-\theta}\|\widetilde{T}_j^N\|_{L^{2p}\to L^{2p}}^{\theta} \\
 &\leq (c_d\|\Omega\|_{L^\infty}2^{-\alpha N(j-1)})^{1-\theta}(c_{d,2p}\|\Omega\|_{L^\infty}(1+N(j)))^{\theta} \\
 &\leq c_{d,p}\|\Omega\|_{L^\infty}2^{-\alpha_p N(j-1)}(1+N(j)),
\end{split}
\end{equation*}
where $\alpha_p=\alpha(1-\theta)=\alpha/(p-1)$.

On the other hand, if $p<2$, let us take $q:=\frac{2p}{1+p}$, so that $1<q<p<2$. In this case, $\frac{1}{p}=\frac{1-\theta}{2}+\frac{\theta}{q}$, for $0<\theta:=2-p<1$. Once again, by interpolating between $L^2$ and $L^q$, we get an analogous $L^p$ estimate.
\end{proof}

\subsection{The Reverse H\"older Inequality}

We first need some recent results concerning the sharp Reverse H\"older Inequality (RHI). They are contained in the theorem below, where we combined \cite[Theorem 2.3]{HP} and \cite[Theorem 2.3]{HPR}

\begin{theorem}[\cite{HP,HPR}] 
\label{thm:sharpRHI}
There are dimensional constants $c_d,C_d$ with the following properties:
\begin{enumerate}[(a)]
\item \label{acon} Let $w\in A_{\infty}$. Then
$$
\fint_Q w^{1+\delta}\le 2\bigg(\fint_Q w\bigg)^{1+\delta},
$$
for any $\delta\in(0, c_d/[w]_{A_\infty}]$. 
\item \label{bcon} If a weight $w$ satisfies the RHI
$$
\bigg(\fint_Q w^{r}\bigg)^{1/r}\le K\fint_Qw,
$$
then $w\in A_\infty$ and $[w]_{A_{\infty}}\le C_d\cdot K\cdot r'$.
\end{enumerate}
\end{theorem}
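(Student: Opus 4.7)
For part~(a), normalize $\ave{w}_Q = 1$ so the target becomes $\int_Q w^{1+\delta}\,dx \le 2|Q|$. By Lebesgue differentiation, $w(x) \le M(w 1_Q)(x) =: F(x)$ a.e.\ on $Q$, so a layer-cake decomposition of $F^\delta$ gives
\[
  \int_Q w^{1+\delta}\,dx \le \int_Q w \cdot F^\delta \,dx = \delta \int_0^\infty \lambda^{\delta - 1} w(E_\lambda)\,d\lambda, \qquad E_\lambda := \{x \in Q : F(x) > \lambda\}.
\]
The contribution from $\lambda \le 1$ is at most $w(Q) = |Q|$. For $\lambda > 1$, the Calder\'on--Zygmund stopping cubes $\{Q_i^\lambda\}$ of $w$ inside $Q$ at level $\lambda$ satisfy $\lambda < \ave{w}_{Q_i^\lambda} \le 2^d \lambda$ by maximality of the parent, hence $w(E_\lambda) \le c_d \lambda\, |E_\lambda|$. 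Combining this with the Fujii--Wilson bound $\int_Q F\,dx \le [w]_{A_\infty}|Q|$ through a careful iteration of the stopping construction at geometric levels $\lambda = 2^{dk}$ yields an absorption factor proportional to $\delta\, [w]_{A_\infty}$. Choosing $\delta = c_d/[w]_{A_\infty}$ with $c_d$ small enough makes this factor $\le \tfrac12$, and absorbing delivers the constant $2$.

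For part~(b), we estimate $\int_Q M(w 1_Q)\,dx$ directly. By H\"older's inequality with exponents $r$ and $r'$, the sharp $L^r \to L^r$ norm bound $\|M\|_{L^r\to L^r} \le C_d r'$ for the Hardy--Littlewood maximal operator, and the assumed Reverse H\"older inequality,
\[
  \int_Q M(w 1_Q)\,dx \le |Q|^{1/r'}\Bigl(\int_Q M(w 1_Q)^r\,dx\Bigr)^{1/r} \le C_d r'|Q|^{1/r'}\Bigl(\int_Q w^r\,dx\Bigr)^{1/r} \le C_d K r'\, w(Q),
\]
where in the last step we used $(\fint_Q w^r)^{1/r} \le K\,\ave{w}_Q$, so that $(\int_Q w^r)^{1/r} \le K|Q|^{1/r}\ave{w}_Q$, and $|Q|^{1/r + 1/r'}\ave{w}_Q = w(Q)$. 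Taking the supremum over cubes $Q$ yields $[w]_{A_\infty} \le C_d K r'$ as claimed.

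The main obstacle is the absorption step in~(a): the coefficient multiplying $\int_Q w^{1+\delta}$ must end up strictly less than $1$, which rules out using the $L^{1+\delta}$ boundedness of $M$ (whose sharp norm is of order $\delta^{-1/(1+\delta)}$ and would exactly cancel the gain from $\delta \sim 1/[w]_{A_\infty}$). Overcoming this requires staying at the level of distribution functions and extracting the small parameter $\delta [w]_{A_\infty}$ purely from the interplay of the Calder\'on--Zygmund stopping with the Fujii--Wilson bound, rather than from any $L^p$-boundedness of $M$. Part~(b), by contrast, is a rather direct consequence of H\"older and the classical sharp $L^r$-theory for $M$.
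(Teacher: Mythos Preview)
The paper does not prove this theorem: it is quoted from \cite{HP,HPR} as a known result, so there is no proof in the paper to compare yours against. That said, your sketch follows the standard arguments from those references.

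Your part~(b) is essentially complete and correct; this is indeed the argument in \cite{HPR}. For part~(a), your outline matches the approach of \cite{HP}, but the sentence ``a careful iteration of the stopping construction at geometric levels $\lambda=2^{dk}$ yields an absorption factor proportional to $\delta[w]_{A_\infty}$'' hides the entire content of the proof. You correctly identify this as the main obstacle, but as written the reader cannot reconstruct the argument: one must show that the layer-cake integral over $\lambda>1$ is dominated by $c_d\,\delta[w]_{A_\infty}\int_Q w^{1+\delta}$ plus a harmless term, and this requires either a recursive use of the stopping-time decomposition (relating level sets at $2^{dk}$ to those at $2^{d(k+1)}$ via the Fujii--Wilson localisation $\int_{Q_i}M(w1_{Q_i})\le[w]_{A_\infty}w(Q_i)$) or an equivalent self-improving argument. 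Your closing paragraph shows you understand why na\"ive $L^{1+\delta}$-boundedness of $M$ fails here, but the actual mechanism that produces the small factor $\delta[w]_{A_\infty}$ is not exhibited. If this were being submitted as a proof rather than a proposal, part~(a) would need to be filled in.
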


As a consequence, we can infer the following corollary. Recall that $(w)_{A_p}:=\max([w]_{A_\infty},[w^{1-p'}]_{A_\infty})$.

\begin{corollary}
\label{cor:wepsilon}
Let $1<p<\infty$ and $w\in A_{p}$.
Then, there exists $c_d$ small enough such that for every $0<\delta \le c_d/(w)_{A_p}$, we have that $w^{1+\delta}\in A_p$ and
$$
[w^{1+\delta}]_{A_p}\le 4 [w]_{A_p}^{1+\delta}.
$$
\end{corollary}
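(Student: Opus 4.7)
\textbf{Proof plan for Corollary \ref{cor:wepsilon}.}

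Since $w\in A_p$ implies $w\in A_\infty$ and $\sigma:=w^{1-p'}\in A_\infty$ (the latter because $\sigma\in A_{p'}$), and by definition $(w)_{A_p}=\max\{[w]_{A_\infty},[\sigma]_{A_\infty}\}$, both quantities $[w]_{A_\infty}$ and $[\sigma]_{A_\infty}$ are controlled by $(w)_{A_p}$. The plan is to apply the sharp Reverse Hölder Inequality, Theorem \ref{thm:sharpRHI}\eqref{acon}, simultaneously to $w$ and to $\sigma$, and then multiply the resulting local estimates in the standard $A_p$ product.

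More precisely, I first observe the key algebraic identity
\[
  (w^{1+\delta})^{1-p'}=w^{(1+\delta)(1-p')}=\sigma^{1+\delta},
\]
so the $A_p$ characteristic of $w^{1+\delta}$ factors through reverse Hölder estimates for $w$ and for $\sigma$ separately. Choose $c_d$ small enough so that the condition $0<\delta\le c_d/(w)_{A_p}$ implies both $\delta\le c_d'/[w]_{A_\infty}$ and $\delta\le c_d'/[\sigma]_{A_\infty}$, where $c_d'$ is the constant appearing in Theorem \ref{thm:sharpRHI}\eqref{acon}. Then for every cube $Q$,
\[
  \fint_Q w^{1+\delta}\le 2\Bigl(\fint_Q w\Bigr)^{1+\delta},\qquad
  \fint_Q \sigma^{1+\delta}\le 2\Bigl(\fint_Q \sigma\Bigr)^{1+\delta}.
\]

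Multiplying the first estimate by the $(p-1)$-th power of the second and taking the supremum over $Q$ gives
\[
  [w^{1+\delta}]_{A_p}\le 2\cdot 2^{p-1}\sup_Q\Bigl[\Bigl(\fint_Q w\Bigr)\Bigl(\fint_Q \sigma\Bigr)^{p-1}\Bigr]^{1+\delta}= 2^{p}\,[w]_{A_p}^{1+\delta}.
\]
Since Theorem \ref{thm:sharpRHI}\eqref{acon} admits a quantitative refinement in which the constant $2$ can be replaced by any $1+\eta$ at the cost of shrinking $c_d'$ by a factor depending on $\eta$, I would then shrink $c_d$ once more so that $2$ is replaced by a constant $C_\eta$ satisfying $C_\eta^{p}\le 4$ (or, more simply, absorb the mild $p$-dependence into the dimensional notation as the authors seem to do). This yields the claimed bound $[w^{1+\delta}]_{A_p}\le 4[w]_{A_p}^{1+\delta}$, and in particular $w^{1+\delta}\in A_p$.

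The only delicate point is this last refinement of the RHI constant to close the final estimate by $4$ rather than $2^p$; the hard part is therefore not in the conceptual structure but in verifying that the dependence of the RHI constant on $\delta[w]_{A_\infty}$ is continuous enough to bring $2^p$ below $4$ by a further shrinking of $c_d$. Everything else is a clean combination of the two reverse Hölder estimates with the factorized structure $(w,\sigma^{p-1})$ of the $A_p$ characteristic.
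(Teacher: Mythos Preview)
Your approach is exactly the paper's: apply Theorem~\ref{thm:sharpRHI}\eqref{acon} to both $w$ and $\sigma=w^{1-p'}$ (using $(w)_{A_p}\ge[w]_{A_\infty},[\sigma]_{A_\infty}$), then multiply the two local reverse H\"older inequalities inside the $A_p$ product. You are also right to flag the constant: the straightforward multiplication yields $2\cdot 2^{p-1}=2^p$, not $4$, so the ``$4$'' in the statement is either a slip or implicitly $p$-dependent; since the corollary is only used to feed into estimates where the constant is absorbed into a generic $C_d$, this has no effect downstream.
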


\begin{proof}
By Theorem~\ref{thm:sharpRHI}, for a suitable $c_d$ and every $\delta\in(0,c_d/(w)_{A_p}]$, we have both
$$
\fint_Q w^{1+\delta}\le 2\bigg(\fint_Q w\bigg)^{1+\delta}
$$
and
$$
\fint_Q w^{(1-p')(1+\delta)}\le 2\bigg(\fint_Q w^{1-p'}\bigg)^{1+\delta}.
$$
Multiplying the two estimates and using the definition of $A_p$ gives the result.
\end{proof}

\begin{corollary}
\label{cor:Ainfty}
Let $w\in A_\infty$.
Then, there exists $c_d$ small enough such that for every $0<\delta \le c_d/[w]_{A_\infty}$, we have that $w^{1+\delta/2}\in A_\infty$ and
$$
[w^{1+\delta/2}]_{A_\infty}\le C_d [w]_{A_\infty}^{1+\delta/2}.
$$
\end{corollary}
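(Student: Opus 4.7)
The plan is to derive a reverse H\"older inequality (RHI) for $v := w^{1+\delta/2}$ and then invoke Theorem~\ref{thm:sharpRHI}(b) to convert it into an $A_\infty$ bound. The crucial observation is that the exponent $\delta/2$ appearing in $v$ is strictly smaller than the largest RHI exponent $\delta_0 := c_d/[w]_{A_\infty}$ allowed by Theorem~\ref{thm:sharpRHI}(a) for $w$ itself, leaving room to raise $v$ to a power $r>1$ while still having $v^r = w^{r(1+\delta/2)}$ controlled by the RHI of $w$.

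Concretely, I would first fix $\delta_0 := c_d/[w]_{A_\infty}$ and apply Theorem~\ref{thm:sharpRHI}(a) at this maximal exponent, obtaining $\fint_Q w^{1+\delta_0} \le 2(\fint_Q w)^{1+\delta_0}$. Next, I would set $r := (1+\delta_0)/(1+\delta/2)$, which is $>1$ since $\delta \le \delta_0$ and is tuned so that $v^r = w^{1+\delta_0}$. Combining the RHI for $w$ with Jensen's inequality $(\fint_Q w)^{1+\delta/2} \le \fint_Q w^{1+\delta/2} = \fint_Q v$ gives
$$
\bigg(\fint_Q v^r\bigg)^{1/r} \le 2^{1/r}\bigg(\fint_Q w\bigg)^{1+\delta/2} \le 2 \fint_Q v,
$$
that is, an RHI for $v$ with constant $K \le 2$ at exponent $r$.

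A short computation then yields $r' = (1+\delta_0)/(\delta_0 - \delta/2) \le 4/\delta_0 = 4[w]_{A_\infty}/c_d$ (using $\delta \le \delta_0$ and choosing $c_d \le 1$). Theorem~\ref{thm:sharpRHI}(b) would then give $[v]_{A_\infty} \le 2 C_d\, r' \le C_d' [w]_{A_\infty}$. Since the Fujii--Wilson constant always satisfies $[w]_{A_\infty}\ge 1$, this upgrades to $[v]_{A_\infty} \le C_d'[w]_{A_\infty}^{1+\delta/2}$, as desired.

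The main obstacle I foresee is the choice of $r$: the most natural analogue of the proof of Corollary~\ref{cor:wepsilon}, taking $r = (1+\delta)/(1+\delta/2)$, would yield $r' \sim 1/\delta$ and, via Theorem~\ref{thm:sharpRHI}(b), a bound of order $1/\delta$, which blows up when $\delta$ is strictly smaller than $\delta_0$. Pegging $r$ to the \emph{maximal} allowed RHI exponent $\delta_0$ rather than to $\delta$ is precisely what delivers a bound uniform in the admissible range of $\delta$.
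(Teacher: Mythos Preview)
Your proposal is correct and follows essentially the same argument as the paper: both set $\delta_0 = c_d/[w]_{A_\infty}$, apply Theorem~\ref{thm:sharpRHI}(a) at the maximal exponent $\delta_0$, choose $r=(1+\delta_0)/(1+\delta/2)$, use Jensen to obtain an RHI for $v=w^{1+\delta/2}$, and then invoke Theorem~\ref{thm:sharpRHI}(b) together with the estimate $r'\le C/\delta_0$. Your closing remark about why pegging $r$ to $\delta_0$ rather than $\delta$ is essential is exactly the point of the argument.
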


\begin{proof}
Let $\delta_0:=c_d/[w]_{A_\infty}$, for a suitable $c_d$.
By \eqref{acon} in Theorem \ref{thm:sharpRHI}, $w$ satisfies the RHI
$$
\fint_Q w^{1+\delta_0}\le 2\bigg(\fint_Q w\bigg)^{1+\delta_0}.
$$
For $\delta\leq\delta_0$, the weight $w^{1+\delta/2}$ satisfies
\begin{equation*}
  \fint_Q (w^{(1+\delta/2)})^{(1+\delta_0)/(1+\delta/2)}
  \leq 2\Big(\fint_Q w\Big)^{1+\delta_0}
  \leq 2\Big(\fint_Q w^{1+\delta/2}\Big)^{(1+\delta_0)/(1+\delta/2)}.
\end{equation*}
By \eqref{bcon} of Theorem \ref{thm:sharpRHI}, $w^{1+\delta/2}\in A_\infty$, and
\begin{align*}
  [w^{1+\delta/2}]_{A_\infty}
  &\leq C_d\cdot 2\cdot\Big(\frac{1+\delta_0}{1+\delta/2}\Big)'
  =C_d\cdot 2\cdot\frac{1+\delta_0}{\delta_0-\delta/2} \\
  &\leq C_d\cdot\frac{8}{\delta_0}=C_d'\cdot[w]_{A_\infty}\leq C_d'\cdot[w]_{A_\infty}^{1+\delta/2}.
\end{align*}
\end{proof}

Finally, for our special weight characteristics $(w)_{A_p}$ and $\{w\}_{A_p}$, we have:

\begin{corollary}\label{cor:newWeights}
Let $1<p<\infty$ and $w\in A_{p}$.
Then, there exists $c_d$ small enough such that for every $0<\delta \le c_d/(w)_{A_p}$, we have that $w^{1+\delta/2}\in A_p$ and
\begin{equation*}
  (w^{1+\delta/2})_{A_p}\leq C_d(w)_{A_p}^{1+\delta/2},\qquad
  \{w^{1+\delta/2}\}_{A_p}\leq C_d\{w\}_{A_p}^{1+\delta/2}.
\end{equation*}
\end{corollary}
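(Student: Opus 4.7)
The plan is to assemble Corollary~\ref{cor:newWeights} directly from the two preceding corollaries, since the new weight characteristics are just combinations of $[\cdot]_{A_p}$ and $[\cdot]_{A_\infty}$ applied to $w$ and $w^{1-p'}$, and each of those pieces is handled by an existing result. The condition $\delta\leq c_d/(w)_{A_p}$ is arranged precisely so that Corollary~\ref{cor:Ainfty} applies simultaneously to $w$ (since $[w]_{A_\infty}\leq (w)_{A_p}$) and to $w^{1-p'}$ (since $[w^{1-p'}]_{A_\infty}\leq (w)_{A_p}$), and Corollary~\ref{cor:wepsilon} applies to $w$ with $\delta/2$ in place of $\delta$.

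For the bound on $(w^{1+\delta/2})_{A_p}$, I would simply write out the definition,
\begin{equation*}
  (w^{1+\delta/2})_{A_p}=\max\Big\{[w^{1+\delta/2}]_{A_\infty},\big[(w^{1-p'})^{1+\delta/2}\big]_{A_\infty}\Big\},
\end{equation*}
using that $(w^{1+\delta/2})^{1-p'}=(w^{1-p'})^{1+\delta/2}$, and then apply Corollary~\ref{cor:Ainfty} to each entry to get a factor $C_d$ times $[w]_{A_\infty}^{1+\delta/2}$ and $[w^{1-p'}]_{A_\infty}^{1+\delta/2}$ respectively. Since $\max(b^r,c^r)=\max(b,c)^r$ for $r>0$, the maximum collapses to $C_d(w)_{A_p}^{1+\delta/2}$.

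For $\{w^{1+\delta/2}\}_{A_p}$ the idea is identical but there is one extra piece: by Corollary~\ref{cor:wepsilon} applied with $\delta/2$ (which is admissible since $\delta/2\leq c_d/(w)_{A_p}$) we have $[w^{1+\delta/2}]_{A_p}\leq 4[w]_{A_p}^{1+\delta/2}$. Combined with the $A_\infty$ bounds above, this gives
\begin{equation*}
  \{w^{1+\delta/2}\}_{A_p}\leq (4[w]_{A_p}^{1+\delta/2})^{1/p}\max\Big\{(C_d[w]_{A_\infty}^{1+\delta/2})^{1/p'},(C_d[w^{1-p'}]_{A_\infty}^{1+\delta/2})^{1/p}\Big\}.
\end{equation*}
Factoring out the constants and pulling the common exponent $1+\delta/2$ outside of the maximum (using $\max(b^r,c^r)=\max(b,c)^r$ once more), the right-hand side becomes $C_d'\{w\}_{A_p}^{1+\delta/2}$, which is the claim.

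There is no real obstacle here: the statement is essentially bookkeeping. The only point one needs to be mindful of is that Corollary~\ref{cor:Ainfty} is formulated with an $A_\infty$ admissibility condition, and one has to verify that the single threshold $\delta\leq c_d/(w)_{A_p}$ is enough to trigger it simultaneously for $w$ and for $w^{1-p'}$; this is built into the definition of $(w)_{A_p}$ as a max, and choosing the dimensional constant $c_d$ no larger than the one provided by Corollary~\ref{cor:Ainfty} suffices.
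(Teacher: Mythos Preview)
Your proposal is correct and follows essentially the same route as the paper's proof: apply Corollary~\ref{cor:Ainfty} to both $w$ and $w^{1-p'}$ (legitimised by $\delta\le c_d/(w)_{A_p}$) to handle the $A_\infty$ pieces, and add Corollary~\ref{cor:wepsilon} with $\delta/2$ for the $A_p$ piece in $\{w^{1+\delta/2}\}_{A_p}$. The only cosmetic difference is that you make explicit the identity $\max(b^r,c^r)=\max(b,c)^r$ when collapsing the maxima, which the paper leaves implicit.
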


\begin{proof}
For the first bound, we apply Corollary~\ref{cor:Ainfty} to both $w\in A_\infty$ and $w^{1-p'}\in A_\infty$, observing that if $\delta\leq c_d/(w)_{A_p}$, then it satisfies both $\delta\leq c_d/[w]_{A_\infty}$ and $\delta\leq c_d/[w^{1-p'}]_{A_\infty}$. Hence
\begin{align*}
  (w^{1+\delta/2})_{A_p}
  &=\max\{[w^{1+\delta/2}]_{A_\infty},[w^{(1-p')(1+\delta/2)}]_{A_\infty}\} \\
  &\leq C_d\max\{[w]_{A_\infty}^{1+\delta/2},[w^{1-p'}]_{A_\infty}^{1+\delta/2}\}
  =C_d(w)_{A_p}^{1+\delta/2}.
\end{align*}
The other bound is similar, using in addition Corollary~\ref{cor:wepsilon}:
\begin{align*}
  \{w^{1+\delta/2}\}_{A_p}
  &=[w^{1+\delta/2}]_{A_p}^{1/p}\max\{[w^{1+\delta/2}]_{A_\infty}^{1/p'},[w^{(1-p')(1+\delta/2)}]_{A_\infty}^{1/p}\} \\
  &\leq (4 [w]_{A_p}^{1+\delta/2})^{1/p}
     \max\{(C_d[w]_{A_\infty}^{1+\delta/2})^{1/p'},(C_d[w^{1-p'}]_{A_\infty}^{(1+\delta/2)})^{1/p}\}  \\
   &\leq C_d'\{w\}_{A_p}^{1+\delta/2}.
\end{align*}
\end{proof}

\subsection{Proof of Theorem \ref{thm:main2}}

Let us denote $\eps:=\frac12 c_d/(w)_{A_p}$.
By Lemma \ref{lem:L2weighted} and Corollary \ref{cor:newWeights}, we have, for this choice of $\eps$,
\begin{equation*}
\begin{split}
  \|\tilde{T}_j^N \|_{L^p(w^{1+\eps})\to L^p(w^{1+\eps})}
  &\leq C_{d,p}\|\Omega\|_{L^\infty}(1+N(j))\{w^{1+\eps}\}_{A_p} \\
  &\leq C_{d,p}\|\Omega\|_{L^\infty}(1+N(j))\{w\}_{A_p}^{1+\eps}.
\end{split}
\end{equation*}
On the other hand, by Lemma~\ref{lem:Lpunweighted}, we also have
\begin{equation*}
  \|\tilde{T}_j^N \|_{L^p\to L^p}
  \leq C_{d,p}\|\Omega\|_{L^\infty}(1+N(j))2^{-\alpha_p N(j-1)}
\end{equation*}

Now we are in position to apply the interpolation theorem with change of measures by E. M. Stein and G. Weiss (see \cite[Th. 2.11]{Stein-Weiss}).
\begin{theorem}[Stein and Weiss]
\label{thm:interpolation}
Assume that $1\le p_0,p_1\le \infty$, that $w_0$ and $w_1$ are positive weights, and that $T$ is a sublinear operator satisfying
$$
T:L^{p_i}(w_i)\to L^{p_i}(w_i), \quad i=0,1,
$$
with quasi-norms $M_0$ and $M_1$, respectively. Then
$$
T:L^p(w)\to L^p(w),
$$
with quasi-norm $M\le M_0^{\lambda}M_1^{1-\lambda}$, where
$$
\frac{1}{p}=\frac{\lambda}{p_0}+\frac{(1-\lambda)}{p_1}, \quad w=w_0^{p\lambda/p_0}w_1^{p(1-\lambda)/p_1}.
$$
\end{theorem}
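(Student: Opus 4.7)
The plan is to prove this by complex interpolation, extending the classical Riesz--Thorin argument to absorb the two endpoint weights into an analytic family of test functions. The key observation is that $h \mapsto h\,v^{1/q}$ is an isometry from $L^q(v)$ onto $L^q$, which reduces any single weighted space to an unweighted one; the real task is to interpolate this isometric identification \emph{analytically} between the two endpoints $(p_0,w_0)$ and $(p_1,w_1)$ before applying a Hadamard three-lines argument.

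By duality and density, it suffices to bound $|\int (Tf)\,g\,dx|$ for simple functions with $\|f\|_{L^p(w)} = \|g\|_{L^{p'}(w^{1-p'})} = 1$. Introduce the analytic exponent $1/p(z) := z/p_0 + (1-z)/p_1$ and the weight substitution factor
\begin{equation*}
  W(z) := w_0^{z/p_0}\, w_1^{(1-z)/p_1},
\end{equation*}
so that $W(\lambda) = w^{1/p}$ while $|W(1+it)| = w_0^{1/p_0}$ and $|W(it)| = w_1^{1/p_1}$. Writing $\tilde f := f\,w^{1/p}$ (so that $\|\tilde f\|_{L^p} = 1$), take the usual unweighted Riesz--Thorin family $\tilde F_z := |\tilde f|^{p/p(z)}\, e^{i\arg \tilde f}$ and set $F_z := \tilde F_z / W(z)$; define $G_z := \tilde G_z\, W(z)$ dually from $\tilde g := g\,w^{-1/p'}$. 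A direct computation gives $F_\lambda = f$, $G_\lambda = g$, and, using $\Re(1/p(1+it)) = 1/p_0$ together with $|W(1+it)| = w_0^{1/p_0}$, one checks that $|F_{1+it}|^{p_0} w_0 = |\tilde F_{1+it}|^{p_0}$; integrating, $\|F_{1+it}\|_{L^{p_0}(w_0)} = \|\tilde F_{1+it}\|_{L^{p_0}} = 1$, and symmetrically on $\Re z = 0$ and for $G_z$. The function $\Phi(z) := \int (TF_z)\, G_z\, dx$ is then analytic in the open strip and continuous and bounded on its closure (since $\tilde f, \tilde g$ are simple, so $F_z, G_z$ are finite sums of characteristic functions times exponentials in $z$), and the endpoint hypotheses yield $|\Phi(1+it)| \le M_0$ and $|\Phi(it)| \le M_1$. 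Hadamard's three-lines lemma applied to $\log|\Phi|$ then gives $|\Phi(\lambda)| = |\int (Tf)\,g\,dx| \le M_0^\lambda M_1^{1-\lambda}$, and taking the supremum over $g$ produces the stated bound.

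The main obstacle is that $T$ is only assumed \emph{sublinear}, whereas complex interpolation is intrinsically a tool for linear operators. I would handle this by the standard linearisation device: multiply $Tf$ by a fixed unimodular factor $\overline{Tf}/|Tf|$ and absorb it into $g$, so that $z \mapsto \int (TF_z)\,G_z\,dx$ is genuinely analytic on the strip --- this is Stein's usual trick for analytic families; one must then verify that the boundary hypothesis for $T$ transfers to the linearised version, which follows because only the moduli of $F_z, G_z$ enter into the endpoint estimates. The remaining work --- checking the two pointwise exponent identities $|F_{1+it}|^{p_0} w_0 = |\tilde f|^p$ and $|F_{it}|^{p_1} w_1 = |\tilde f|^p$ (and their duals for $G_z$) --- is routine arithmetic with the explicit definitions of $p(z)$ and $W(z)$.
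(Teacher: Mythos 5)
The paper does not prove this theorem; it cites it directly as \cite[Th.~2.11]{Stein-Weiss}. Your complex-interpolation argument --- the isometry $L^q(v)\to L^q$, the analytic exponent $1/p(z)$, the weight factor $W(z)=w_0^{z/p_0}w_1^{(1-z)/p_1}$, and the three-lines lemma --- is indeed the standard route and is essentially how Stein and Weiss proved the linear case, so structurally you are on the right track. The verification that $\Re(1/p(1+it))=1/p_0$ and $|W(1+it)|=w_0^{1/p_0}$ yield $\|F_{1+it}\|_{L^{p_0}(w_0)}=1$ is correct.

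There are, however, genuine problems. First, your handling of sublinearity is not correct. Multiplying $Tf$ by the unimodular factor $\overline{Tf}/|Tf|$ and absorbing it into $g$ only fixes the choice of phase at $z=\lambda$; it does nothing about the fundamental obstruction that for a merely sublinear $T$ the map $z\mapsto TF_z$ is \emph{not} analytic (it is not even well defined as a linear combination of $T$ applied to the pieces of $F_z$). Stein's linearization trick applies to maximal-type operators $T=\sup_j|T_j|$ via measurable selection, not to an arbitrary sublinear operator, and cannot be invoked here as a black box. For the paper's application $T=\widetilde T_j^N$ is linear, so this gap does not affect the paper, but your proof does not establish the sublinear case as stated. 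Second, a small but real slip: the dual substitution should be $\tilde g:=g\,w^{-1/p}$, not $g\,w^{-1/p'}$; one needs $\int|\tilde g|^{p'}=\int|g|^{p'}w^{-p'/p}=\int|g|^{p'}w^{1-p'}$. Third, and more minor: you justify boundedness of $\Phi$ on the closed strip by saying $\tilde f,\tilde g$ are simple, but $\tilde f=f\,w^{1/p}$ is not simple when $f$ is; one should instead take $\tilde f$ simple (i.e.\ $f=s\,w^{-1/p}$ with $s$ simple), and even then the factors $W(z)^{\pm1}$ involve arbitrary measurable weights, so the usual admissibility/approximation argument (truncating the weights to be bounded above and below on the support) is needed before the three-lines lemma can be applied.
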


We apply Theorem \ref{thm:interpolation} to $T=\tilde{T}_j^N$ with $p_0=p_1=p$, $w_0=w^0=1$ and $w_1=w^{1+\eps}$ so that $\lambda=\eps/(1+\eps)$:
\begin{align*}
  \|\tilde{T}_j^N\|_{L^p(w)\to L^p(w)}
  &\leq \|\tilde{T}_j^N\|_{L^p\to L^p}^{\eps/(1+\eps)}\|\tilde{T}_j^N\|_{L^p(w^{1+\eps})\to L^p(w^{1+\eps})}^{1/(1+\eps)} \\
  &\leq C_{d,p}\|\Omega\|_{L^\infty}(1+N(j))2^{-\alpha_p N(j-1)\eps/(1+\eps)}\{w\}_{A_p} \\
  &\leq C_{d,p}\|\Omega\|_{L^\infty}(1+N(j))2^{-\alpha_{p,d} N(j-1)/(w)_{A_p}}\{w\}_{A_p}.
\end{align*}
Thus
\begin{align*}
  \|T_\Omega\|_{L^p(w)\to L^p(w)}
  &\leq\sum_{j=0}^\infty\|\tilde{T}_j^N\|_{L^p(w)\to L^p(w)} \\
  &\leq C_{d,p}\|\Omega\|_{L^\infty}\{w\}_{A_p}\sum_{j=0}^\infty (1+N(j))2^{-\alpha_{p,d}N(j-1)/(w)_{A_p}},
\end{align*}
and all that remains is to make a good choice of the increasing function $N(j)$. We choose $N(j)=2^j$ for $j\geq 1$. Then, using $e^x\geq \frac12 x^2$ and hence $e^{-x}\leq 2 x^{-2}$, we have
\begin{equation*}
\begin{split}
  \sum_{j=0}^\infty & (1+N(j))2^{-\alpha_{p,d}N(j-1)/(w)_{A_p}} \\
 & \leq c\sum_{j:2^j\leq(w)_{A_p}}2^j+C_{p.d}\sum_{j:2^j\geq(w)_{A_p}}2^j\Big(\frac{(w)_{A_p}}{2^j}\Big)^2
 \leq C_{p,d}(w)_{A_p}
\end{split}
\end{equation*}
by summing two geometric series in the last step.

This completes the proof that
\begin{equation*}
  \|T_\Omega\|_{L^p(w)\to L^p(w)}\leq C_{d,p}\|\Omega\|_{L^\infty}\{w\}_{A_p}(w)_{A_p}.
\end{equation*}

\begin{remark}
The above bound is the best that one can get by any choice of the function $N(\cdot)$, at least without deeper structural changes in the proof. Indeed, given an increasing function $N:\N\to\N$, let $j_0$ be the smallest value such that $N(j_0)>(w)_{A_p}$, and hence $N(j_0-1)\leq(w)_{A_p}$. But then
\begin{equation*}
  (1+N(j_0))2^{-\alpha_{p,d}N(j_0-1)/(w)_{A_p}}
  \geq (1+(w)_{A_p})2^{-\alpha_{p,d}},
\end{equation*}
so that clearly the entire sum over $j\in\N$ is bigger than the right hand side as well.

It might also be interesting to note that the  na\"ive choice $N(j)=j$ above would have produced a weaker bound, with the second power of $(w)_{A_p}$ rather than the first. This is the reason for us studying the operators $\widetilde{T}_j^N$, instead of just $\widetilde{T}_j$.
\end{remark}

\section{Applications and conjectures}

Let us consider the Ahlfors--Beurling, or just Beurling, operator. This operator $B$ can be understood as a Calder\'on--Zygmund operator, defined on $L^2(\C)$, by
$$
Bf(z):=-\frac{1}{\pi}\PV \int_{\C}\frac{f(z)}{(z-\zeta)^2}\,d A(\zeta),
$$
where $dA$ denotes the area Lebesgue measure on $\C$.

Investigation of this operator was the origin of the $A_2$ conjecture: K. Astala, T. Iwaniec, and E. Saksman \cite{AIS} raised the question whether the norm $\|B\|_{L^p(w)\to L^p(w)}$ depends linearly on $[w]_{A_p}$, $p\ge2$. In particular, they conjectured
\begin{equation}
\label{eq:Blinear}
\|B\|_{L^2(w)\to L^2(w)}\le C[w]_{A_2},
\end{equation}
and this was positively answered in \cite{Pet-Vol}.

Let us now consider the operator given by the integer powers, or composition, of Beurling operators $B^m=B\circ\cdots \circ B$, for $m\in \N$. They have a representation (see e.g. \cite[Eq. (1)]{DPV}, which gives the $m$th power of $\sqrt{B}$ and should be used with $2m$ in place of $m$ for the present purposes) as homogeneous singular integrals as
$$
B^mf(z):=\PV \int_{\C}K_m(z-\zeta) f(\zeta)\,d A(\zeta),
$$
where, for $\zeta=r e^{i\phi}\in \C$,
$$
K_m(\zeta)=\frac{\Omega_m(e^{i\phi})}{|\zeta|^2}, \qquad \Omega_m(e^{i\phi})=\frac{(-1)^m}{\pi}\cdot m\cdot e^{-i2m\phi}.
$$
It is easy to check that $\displaystyle K_1(\zeta)=-\frac{1}{\pi}\frac{1}{\zeta^2}$ and $\|\Omega_m(e^{i\phi})\|_{L^{\infty}}\le m$.

Since each $B^m$ is a nice Calder\'on--Zygmund operator, both the bound \eqref{eq:Blinear} and an analogous bound with $B$ and $C$ replaced by $B^m$ and some $C_m$ are special cases of the general $A_2$ theorem \cite{Hytonen:A2}. Shortly before the general result of \cite{Hytonen:A2}, the operators $B^m$ were studied by O.~Dragi\v{c}evi\'c \cite{Dragi:Bm}, who found that $C_m\leq C\cdot m^3$. By a careful study of the constants in \cite{Hytonen:A2} and subsequent new proofs of the $A_2$ theorem, this could be somewhat improved. From the results in the present paper, we obtain:

\begin{corollary}\label{cor:Bm}
For every $w\in A_p$, we have
$$
\|B^m\|_{L^p(w)\to L^p(w)}\le  C\,m\cdot \{w\}_{A_p}\cdot\min\big(1+\log m,(w)_{A_p}\big),
$$
and in particular
$$
\|B^m\|_{L^2(w)\to L^2(w)}\le  C\,m\cdot [w]_{A_2}\cdot\min\big(1+\log m,[w]_{A_2}\big),
$$
\end{corollary}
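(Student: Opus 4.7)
The plan is to prove the two bounds inside the minimum separately, via two different interpretations of $B^m$: first as a rough operator in the sense of Theorem~\ref{thm:main2}, and then as a Dini-continuous Calder\'on--Zygmund operator in the sense of Theorem~\ref{thm:main1}. Taking the minimum of the two bounds then gives the claim.

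The first bound is immediate: since $B^m=T_{\Omega_m}$ with $\|\Omega_m\|_{L^\infty}\le m$, Theorem~\ref{thm:main2} directly yields
\begin{equation*}
  \|B^m\|_{L^p(w)\to L^p(w)}\le C\, m\,\{w\}_{A_p}(w)_{A_p}.
\end{equation*}

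For the second bound I would exploit the smoothness of the kernel $K_m$. Since $B$ is a Fourier multiplier with symbol $\bar\xi^2/|\xi|^2$ of modulus one, $\|B^m\|_{L^2\to L^2}=1$, and from $\|\Omega_m\|_{L^\infty}\le m/\pi$ the size constant satisfies $C_{K_m}\le m/\pi$. To compute a modulus of continuity $\omega_m$, I would decompose, for $|\zeta-\zeta'|\le\tfrac12|\zeta|$,
\begin{equation*}
K_m(\zeta)-K_m(\zeta')=\frac{\Omega_m(\zeta/|\zeta|)-\Omega_m(\zeta'/|\zeta'|)}{|\zeta|^2}+\Omega_m(\zeta'/|\zeta'|)\Bigl(\frac{1}{|\zeta|^2}-\frac{1}{|\zeta'|^2}\Bigr),
\end{equation*}
where the second term is controlled trivially by $Cm|\zeta-\zeta'|/|\zeta|^3$. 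For the angular part, the key observation is that $\Omega_m(e^{i\phi})\propto m e^{-2im\phi}$ admits \emph{two} natural bounds: the uniform estimate of size $m$ and the Lipschitz estimate of size $\sim m^2$. Combining them gives
\begin{equation*}
 \bigl|\Omega_m(\zeta/|\zeta|)-\Omega_m(\zeta'/|\zeta'|)\bigr|\le C m\min\bigl(1,\, m|\zeta-\zeta'|/|\zeta|\bigr),
\end{equation*}
so $B^m$ is an $\omega_m$-Calder\'on--Zygmund operator with $\omega_m(t)\le Cm\min(1,mt)$. The Dini norm is then
\begin{equation*}
  \|\omega_m\|_{\Dini}\le Cm\Bigl(\int_0^{1/m}mt\,\frac{dt}{t}+\int_{1/m}^1\frac{dt}{t}\Bigr)\le Cm(1+\log m).
\end{equation*}
Applying Theorem~\ref{thm:main1} (using Remark~\ref{rem:CZO} to replace $T_\sharp$ by $T$) produces
\begin{equation*}
  \|B^m\|_{L^p(w)\to L^p(w)}\le C\,m(1+\log m)\{w\}_{A_p}.
\end{equation*}

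Taking the minimum of the two bounds yields the corollary, and the $L^2$ case follows using $\{w\}_{A_2}\le c_d[w]_{A_2}$ and $(w)_{A_2}\le c_d[w]_{A_2}$. The only genuinely technical step is the clipping argument in the modulus of continuity: without combining the uniform and Lipschitz estimates for $\Omega_m$, one would get a factor of $m$ instead of $1+\log m$ in the Dini norm, and the gain provided by the Dini-continuous theory over the rough theory would be lost.
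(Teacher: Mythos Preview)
Your proof is correct and follows essentially the same approach as the paper: both apply Theorem~\ref{thm:main2} to $B^m=T_{\Omega_m}$ with $\|\Omega_m\|_{L^\infty}\le m$ for the rough bound, and Theorem~\ref{thm:main1} (via Remark~\ref{rem:CZO}) with the clipped modulus $\omega_m(t)\le Cm\min(1,mt)$ and $\|\omega_m\|_{\Dini}\le Cm(1+\log m)$ for the Dini bound, then take the minimum. Your treatment is slightly more explicit in decomposing the kernel difference into angular and radial parts and in noting $\|B^m\|_{L^2\to L^2}=1$ (the paper uses the cruder $\le m$), but the argument is the same.
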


\begin{proof}
Observe that, on one hand,
$$
\pi|\Omega_m(e^{i\phi})-\Omega_m(e^{i\phi'})|=m|e^{-i2m\phi}-e^{-i2m\phi'}|=m|e^{-i2m(\phi-\phi')}-1|\le 2m^2|\phi-\phi'|.
$$
Also, $|e^{-i2m\phi}-e^{-i2m\phi'}|\le 2$ obviously. So, if we denote by $\omega_{B^m}(t)$ the modulus of continuity associated with $B^{m}$, we have just proved that
$$
\omega_{B^m}(t)\le C \cdot m\cdot \min\{mt,1\},
$$
where $C$ is a positive constant independent of $B^m$.

Moreover,
$$
\int_0^1\omega_{B^m}(t)\frac{dt}{t}\le C\cdot m\bigg(\int_0^{1/m}mt\frac{dt}{t}+\int_{1/m}^1\frac{dt}{t}\bigg)=C\cdot m(1+\log m),
$$
again, with the constant $C$ independent of $B^m$.
So, by Theorem \ref{thm:main1},
\begin{align}
\label{eq:B1}
\notag\|B^mf\|_{L^p(w)\to L^p(w)}&\le c_p\{w\}_{A_p}\big(\|B^m\|_{L^2\to L^2}+m+m(1+\log m)\big)\\
&\le c_p\cdot m\cdot \{w\}_{A_p}\big(1+\log m\big),
\end{align}
since $\|B^m\|_{L^2\to L^2}\le m$.
On the other hand, if we consider $B^m$ as a rough operator, Theorem \ref{thm:main2} gives
\begin{equation}
\label{eq:B2}
\|B^mf\|_{L^p(w)\to L^p(w)}\le  c_p\cdot m \cdot\{w\}_{A_p} (w)_{A_p}.
\end{equation}
The conclusion follows by considering \eqref{eq:B1} and \eqref{eq:B2} together.
\end{proof}

The form of the bounds above seems too arbitrary to be final, which leads us to conjecture that the last factor should not be needed at all. In order not to obscure the main point by unnecessary technicalities, we state the conjectures only for the case $p=2$ and with the classical $A_2$ constant $[w]_{A_2}$:

\begin{conjecture}
For every $w\in A_2$, we have
$$
\|T_{\Omega}f\|_{L^2(w)}\le  c_d\|\Omega\|_{L^\infty}[w]_{A_2}\|f\|_{L^2(w)}.
$$
\end{conjecture}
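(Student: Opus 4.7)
The target is to eliminate the extra factor of $(w)_{A_p}$ appearing in Theorem~\ref{thm:main2}. By Theorem~\ref{thm:ApAinfty}, a pointwise sparse domination of the form
\begin{equation*}
  |T_\Omega f(x)|\le c_d\|\Omega\|_{L^\infty}\sum_{\alpha\in\{0,1,2\}^d}\A_{\Ss^\alpha} f(x)
\end{equation*}
would immediately yield the linear $[w]_{A_2}$ bound. My plan is therefore to upgrade the argument of Section~3 so as to produce such a sparse bound for $T_\Omega$ directly, rather than applying the sparse machinery piece by piece to the smooth operators $\widetilde{T}_j^N$ (each of which costs the Dini factor $1+N(j)$ that ultimately forces the second $(w)_{A_p}$ through the Stein--Weiss interpolation step).

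The approach is to adapt Lacey's recursion (Lemma~\ref{lem:recursion}) to $T_\Omega$ itself. The recursion uses three ingredients: (i) a weak $(1,1)$ estimate for the maximal truncation $T_{\Omega,\sharp}$; (ii) a comparison estimate between $T_{\Omega,\eps,\delta}f(x)$ and $T_{\Omega,\eps,\delta}f(x')$ for $|x-x'|<\tfrac12\eps$, playing the role of Lemma~\ref{lem:truncatedMO}; and (iii) the weak $(1,1)$ inequality for $M$. Ingredient (i) is available with constant $c_d\|\Omega\|_{L^\infty}$ by the theorem of Christ--Rubio de Francia--Seeger. The crux is (ii). Since the kernel of $T_\Omega$ has no smoothness, one cannot hope for a pointwise estimate in terms of $Mf(x)$; instead one looks for an oscillation inequality of the form
\begin{equation*}
  \big|T_{\Omega,\eps,\delta}f(x)-T_{\Omega,\eps,\delta}f(x')\big|\le c_d\|\Omega\|_{L^\infty}M_q f(x),
  \qquad M_qf:=M(|f|^q)^{1/q},
\end{equation*}
for some $q$ close to~$1$. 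I would attempt to prove this by splitting the domain of integration into dyadic annuli around~$x$: in each full annulus the two truncations differ by a convolution against a kernel of the form $K_k-\tau_{x-x'}K_k$ whose Fourier decay is controlled by Lemma~\ref{lem:estimateKk} and the cancellation $\int_{\Sp^{d-1}}\Omega\,d\sigma=0$, while the two boundary annuli near the radii~$\eps$ and~$\delta$ are handled with H\"older of exponent $q'$ against the size bound.

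Granted such an estimate, rerunning the proof of Lemma~\ref{lem:recursion} verbatim (with $M$ replaced by $M_q$ in the definition of the exceptional set $E_1$, and the weak $(q,q)$-bound for $M$ used in controlling $E_2$) would produce sparse collections $\Ss^\alpha$ and a pointwise bound
\begin{equation*}
  T_{\Omega,\sharp}f(x)\le c_d\|\Omega\|_{L^\infty}\sum_{\alpha\in\{0,1,2\}^d}\A_{\Ss^\alpha}(|f|^q)^{1/q}(x).
\end{equation*}
Combining this with the $A_p$ theory for $M_q$-type sparse operators gives, after optimising, a weighted bound of the form $\|T_\Omega\|_{L^2(w)\to L^2(w)}\le c_d\|\Omega\|_{L^\infty}[w]_{A_{2/q}}^{\max\{1,1/(2/q-1)\}}$, which tends to the conjectured linear $[w]_{A_2}$ bound as $q\to 1^+$ but does not reach it.

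The main obstacle is exactly this last step: every natural use of the Fourier decay of $\widehat{K}_k$ in the transition annuli appears to require an $L^{1+\delta}$-average on the right-hand side, so the oscillation estimate with $q=1$ seems out of reach by the above route. Bridging the remaining gap will probably require either an $L\log L$-improving bound for the individual pieces $T_k$ (allowing one to replace $M_q$ by a maximal operator of order $L\log L$, which behaves like $M$ at the level of $A_\infty$-sparse theory), or a more substantial structural reorganisation in which the cancellation $\int_{\Sp^{d-1}}\Omega\,d\sigma=0$ is exploited at the level of the recursion itself rather than only scale by scale. This is, to my mind, the point where a genuinely new idea is needed.
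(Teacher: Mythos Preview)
The statement is a \emph{conjecture} in the paper; the authors do not prove it, and indeed explicitly say they do not know whether the quadratic bound of Theorem~\ref{thm:main2} is sharp. There is therefore no proof in the paper to compare your proposal against.

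Your proposal is, as you yourself acknowledge, not a proof either: the argument terminates with an $M_q$-sparse bound for $q>1$, which only recovers $[w]_{A_{2/q}}$-type estimates and does not reach the linear $[w]_{A_2}$ bound. You correctly locate the obstruction in the oscillation estimate (your step~(ii)). I would add that even the $q>1$ version of that estimate is not established by what you wrote: the Fourier decay of $\widehat{K}_k$ from Lemma~\ref{lem:estimateKk} is an $L^2$ tool and does not straightforwardly yield a \emph{pointwise} bound on $T_{\Omega,\eps,\delta}f(x)-T_{\Omega,\eps,\delta}f(x')$. For the ``full annuli'' term, the integrand $K(x-y)-K(x'-y)$ has no smallness whatsoever when $\Omega$ is merely bounded, so splitting into dyadic shells and applying H\"older does not by itself produce a convergent sum; some genuine $L^p$-improving or square-function input for the pieces $K_k$ is needed already at this stage, not only at the $q=1$ endpoint. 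So the gap appears somewhat earlier than you indicate.

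In short: your diagnosis---that a direct sparse domination of $T_\Omega$ with $L^1$ averages would settle the conjecture, and that the missing ingredient is a Lacey-type oscillation inequality without kernel smoothness---is sound and matches the authors' implicit reasoning for why the conjecture is plausible. But the sketch you give does not supply that ingredient, and the conjecture remains open as far as this paper is concerned.
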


In particular, for the operator $B^m$:

\begin{conjecture}
For every $w\in A_2$, we have
$$
\|B^mf\|_{L^2(w)}\le  C\,m\cdot [w]_{A_2}\|f\|_{L^2(w)}.
$$
\end{conjecture}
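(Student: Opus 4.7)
The conjecture refines the two bounds in Corollary~\ref{cor:Bm} to their minimum, namely $Cm[w]_{A_2}$. The obstruction is that Theorem~\ref{thm:main1} applied to $B^m$ produces a superfluous $\log m$ (stemming from $\|\omega_{B^m}\|_{\Dini}\sim m\log m$, whose mechanism is the same as in Lemma~\ref{lem:modulus}), while Theorem~\ref{thm:main2} applied to $B^m$ produces a superfluous factor of $[w]_{A_2}$. My plan is to combine the strengths of both proofs while exploiting the very special structure of $\Omega_m=c_m e^{-2im\phi}$: $B^m$ is a \emph{pure-phase} Fourier multiplier $(\bar\xi/\xi)^m$, and this rigidity should be usable inside the scheme of Section~3.

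First, I would rerun the decomposition $B^m=\sum_{j\ge 0}\widetilde T_j^N$ with $\Omega=\Omega_m$ and sharpen both Lemma~\ref{lem:estimateKk} and Lemma~\ref{lem:L2} for this particular $\Omega$. Since $\|B^m\|_{L^2\to L^2}=1$, not $m$, there should be room to prove $|\widehat K_k(\xi)|\le c\min(|2^k\xi|^\alpha,|2^k\xi|^{-\alpha})$ with \emph{no} factor of $m$, by exploiting the single spherical-harmonic character of $\Omega_m$: the relevant oscillatory integral is essentially a Bessel transform of order $2m$, which decays rapidly once $|2^k\xi|\gtrsim m$ and can be brought below the trivial $\|\Omega\|_{L^\infty}$ bound by a factor of $m$ in the high-frequency regime.

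Next, I would re-examine the Dini-type bound in Lemma~\ref{lem:modulus} for the smooth pieces $\widetilde T_j^N$. Rather than controlling gradients only by $\|\Omega\|_{L^\infty}=m$, I would use the fact that $\Omega_m\in C^\infty(\Sp^1)$ with exactly one Fourier mode, so that after convolution with $\phi_{k-N(j)}$ the angular oscillation $e^{-2im\phi}$ should contribute only once — through the $L^2$-gain of the previous step — rather than a second time through a gradient estimate. If this succeeds, $\widetilde T_j^N$ admits a Dini norm scaling like $C m(1+N(j))$ instead of $Cm^2(1+N(j))$; inserting this into Lemma~\ref{lem:L2weighted} and running the Stein--Weiss interpolation of Section~3.4 with the choice $N(j)=2^j$ gives exactly $C_p\, m\,[w]_{A_2}$.

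The decisive obstacle should be the refined Dini estimate of the second step, and I suspect it is essentially insurmountable within the present framework. The $m\log m$ in $\|\omega_{B^m}\|_{\Dini}$ is intrinsic to any $\omega$-Calder\'on--Zygmund formulation: since $|\nabla K_m|\sim m^2/|z|^3$ while $|K_m|\sim m/|z|^2$, the Dini integral splits at $t\sim 1/m$ and the outer piece contributes $m\log m$ regardless of how tightly one controls the scales. Removing this logarithm likely requires a mechanism outside Dini theory — for instance, a dyadic shift representation of $B^m$ of complexity $O(m)$ combined with the sharp $A_2$ bound for shifts, or a Bellman-function argument tailored to the pure-phase symbol $(\bar\xi/\xi)^m$ in the spirit of Petermichl--Volberg. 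Either route would be a substantial departure from the techniques of this paper, which is presumably why the authors state it only as a conjecture.
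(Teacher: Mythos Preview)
This statement is a \emph{conjecture}, and the paper offers no proof of it; Corollary~\ref{cor:Bm} is the best the authors can do, and the conjecture is explicitly presented as open. So there is no ``paper's own proof'' to compare against, and your proposal is not a proof either --- as you yourself concede in the final paragraph.

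That said, your intermediate plan contains a concrete miscalculation that is worth flagging. You write that the Dini norm of the smooth piece $\widetilde T_j^N$ currently scales like $Cm^2(1+N(j))$ and that your refinement would bring it down to $Cm(1+N(j))$. But Lemma~\ref{lem:modulus} already gives $\|\omega_j^N\|_{\Dini}\le c_d\|\Omega\|_{L^\infty}(1+N(j))$, and for $\Omega=\Omega_m$ one has $\|\Omega_m\|_{L^\infty}=m/\pi$, so the bound is \emph{already} $Cm(1+N(j))$. There is no extra factor of $m$ to remove at that step.

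More importantly, even granting every $m$-improvement you propose, the conclusion ``running the Stein--Weiss interpolation of Section~3.4 with $N(j)=2^j$ gives exactly $C_pm[w]_{A_2}$'' is incorrect. The second power of $[w]_{A_2}$ in Theorem~\ref{thm:main2} does not come from the size of $\|\Omega\|_{L^\infty}$ or from the Dini norms of the pieces; it comes from the \emph{summation} $\sum_j(1+N(j))2^{-\alpha N(j-1)/(w)_{A_p}}$, which inevitably contributes a factor $(w)_{A_p}$ regardless of how the constants in front depend on $m$ (the authors' Remark after the proof of Theorem~\ref{thm:main2} makes exactly this point). Sharpening the $m$-dependence of each $\widetilde T_j^N$ cannot touch this mechanism. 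Your closing diagnosis --- that removing the logarithm or the extra $[w]_{A_2}$ likely requires tools outside the Dini/decomposition framework of this paper --- is the correct takeaway, and it is consistent with the authors leaving the statement as a conjecture.
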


\appendix

\section{Quantitative form of some classical bounds}

For easy reference, we record several results from the classical Calder\'on--Zygmund theory, in a quantitative form appropriate for our purposes. All these results are in principle well known, but not so easily available with precise quantitative statement.

\begin{theorem}[Calder\'on--Zygmund]\label{thm:CZ}
Let $T$ be an $\omega$-Calder\'on--Zygmund operator whose modulus of continuity satisfies the Dini condition \eqref{eq:Dini}. Then,
\begin{equation*}
  \Norm{T}{L^1\to L^{1,\infty}}\le c_d(\Norm{T}{L^2\to L^2}+\|\omega\|_{\Dini}).
\end{equation*}
\end{theorem}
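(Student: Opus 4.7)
The plan is to carry out the classical Calder\'on--Zygmund decomposition argument for the weak-type $(1,1)$ bound, keeping careful track of how each constant enters, and then to apply a normalization argument at the end to replace the naturally \emph{quadratic} dependence on $\Norm{T}{L^2\to L^2}$ by the linear one claimed in the theorem. Crucially, the size constant $C_K$ never enters this proof: the good-part estimate relies only on the $L^2$-boundedness of $T$, while the bad-part estimate relies only on the smoothness.

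Given $f\in L^1(\R^d)$ and $\lambda>0$, I would decompose $f=g+b$ in the standard way: $b=\sum_Q b_Q$ with the cubes $Q\in\D$ pairwise disjoint, $\supp b_Q\subset Q$, $\int b_Q=0$, $\fint_Q|b_Q|\le c_d\lambda$, $\sum_Q|Q|\le \Norm{f}{L^1}/\lambda$, and $\Norm{g}{L^\infty}\le c_d\lambda$, $\Norm{g}{L^1}\le\Norm{f}{L^1}$. For the good part I would use $\Norm{g}{L^2}^2\le\Norm{g}{L^\infty}\Norm{g}{L^1}\le c_d\lambda\Norm{f}{L^1}$, Chebyshev and the $L^2$-boundedness of $T$ to obtain $|\{|Tg|>\lambda/2\}|\le c_d\Norm{T}{L^2\to L^2}^2\Norm{f}{L^1}/\lambda$. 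For the bad part, set $Q^*=2\sqrt d\,Q$ concentrically and $\Omega^*:=\bigcup_Q Q^*$, so that $|\Omega^*|\le c_d\Norm{f}{L^1}/\lambda$ and $|x-y_Q|\ge 2|y-y_Q|$ whenever $x\notin Q^*$ and $y\in Q$; here $y_Q$ is the center of $Q$. The cancellation $\int b_Q=0$ reduces the estimate on $\Omega^{*c}$ to the polar computation
\begin{equation*}
  \int_{|x-y_Q|>2|y-y_Q|}\omega\Big(\frac{|y-y_Q|}{|x-y_Q|}\Big)\frac{\ud x}{|x-y_Q|^d}=c_d\int_0^{1/2}\omega(t)\frac{\ud t}{t}\le c_d\Norm{\omega}{\Dini},
\end{equation*}
which, summed over $Q$, yields $\int_{\Omega^{*c}}|Tb|\le c_d\Norm{\omega}{\Dini}\Norm{f}{L^1}$ and hence $|\{x\notin\Omega^*:|Tb(x)|>\lambda/2\}|\le c_d\Norm{\omega}{\Dini}\Norm{f}{L^1}/\lambda$ by Chebyshev.

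Adding the three contributions yields the intermediate inequality
\begin{equation*}
  \Norm{T}{L^1\to L^{1,\infty}}\le c_d\bigl(1+\Norm{T}{L^2\to L^2}^2+\Norm{\omega}{\Dini}\bigr),
\end{equation*}
in which the quadratic dependence on $\Norm{T}{L^2\to L^2}$ is the main discrepancy with the stated bound. To promote this to the linear form I would set $M:=\Norm{T}{L^2\to L^2}+\Norm{\omega}{\Dini}$; the case $M=0$ forces $T\equiv 0$ and is trivial, and otherwise the rescaled operator $T/M$ is again an $\omega'$-Calder\'on--Zygmund operator with modulus $\omega'=\omega/M$, satisfying $\Norm{T/M}{L^2\to L^2}\le 1$ and $\Norm{\omega'}{\Dini}\le 1$. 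Applying the intermediate bound to $T/M$ gives $\Norm{T/M}{L^1\to L^{1,\infty}}\le 3c_d$, so that $\Norm{T}{L^1\to L^{1,\infty}}\le 3c_dM$, which is the claim.

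The main (and essentially only) technical point in this plan is the polar change of variables $t=|y-y_Q|/|x-y_Q|$ in the bad-part estimate, which is exactly what converts the smoothness modulus $\omega$ into its Dini norm. Everything else is standard bookkeeping, and the normalization trick at the end is a clean way to avoid having to design a cleverer decomposition in order to obtain linear rather than quadratic dependence on $\Norm{T}{L^2\to L^2}$.
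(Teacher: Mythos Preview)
Your proof is correct. The Calder\'on--Zygmund decomposition and bad-part computation are standard and carried out accurately, and your normalization trick---applying the intermediate quadratic bound to $T/M$ with $M=\Norm{T}{L^2\to L^2}+\Norm{\omega}{\Dini}$---cleanly upgrades the dependence to linear.

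Your route differs from the paper's in how the quadratic dependence on $\Norm{T}{L^2\to L^2}$ is removed. The paper performs the Calder\'on--Zygmund decomposition at an auxiliary level $\alpha\lambda$ rather than $\lambda$; the good-part estimate then contributes $c_d\,\alpha\Norm{T}{L^2\to L^2}^2$ while the exceptional set $\Omega^*$ contributes $c_d/\alpha$, and optimising $\alpha\sim\Norm{T}{L^2\to L^2}^{-1}$ yields the linear bound in one pass. Your approach instead runs the decomposition at the natural level, accepts the quadratic intermediate bound, and then rescales the operator. The two devices are essentially dual to one another---choosing $\alpha$ is equivalent to rescaling $T$---but your version has the virtue of cleanly separating the classical argument from the optimisation step, while the paper's version avoids proving an auxiliary inequality that is never used in its own right.
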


\begin{proof}[Sketch of proof]
This follows from the usual Calder\'on--Zygmund decomposition technique, which uses smoothness of the kernel in the second variable. The only twist to the usual argument is that, when estimating the size of the level set $\{|Tf|>\lambda\}$, one should make the Calder\'on--Zygmund decomposition of $f$ at the level $\alpha\lambda$ (instead of~$\lambda$) and optimise with respect to $\alpha$ in the end.
\end{proof}

\begin{theorem}[Cotlar's inequality]
Let $T$ be an $\omega$-Calder\'on--Zygmund operator whose modulus of continuity satisfies the Dini condition \eqref{eq:Dini}. If $\delta\in(0,1]$, then
$$
  T_{\sharp}f\leq c_{d,\delta}\big(\Norm{T}{L^2\to L^2}+\|\omega\|_{\Dini}\big)Mf+c_{d,\delta} M_\delta(Tf),
$$
where
$$
M_\delta f(x):=\big(M(|f|^{\delta})(x)\big)^{1/\delta}= \sup_{r>0}\bigg(\fint_{B(x,r)}|f|^{\delta}\bigg)^{1/\delta}
$$
\end{theorem}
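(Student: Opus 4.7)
The argument is the classical one for Cotlar's inequality, adapted to keep track of the Dini-norm dependence. Fix $x\in\R^d$ and $\eps>0$, and set $B:=B(x,\eps/2)$. I would split $f=f_1+f_2$ with $f_1:=f\mathbf{1}_{B(x,\eps)}$ and $f_2:=f-f_1$. Since $x\notin\supp f_2$, one has the identity $Tf_2(x)=T_\eps f(x)$, so for any $z\in B$ the triangle inequality gives
\begin{equation*}
  |T_\eps f(x)| \;\leq\; |Tf_2(x)-Tf_2(z)|+|Tf(z)|+|Tf_1(z)|.
\end{equation*}

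Next, I would raise this to the $\delta$-th power (using $(a+b+c)^\delta\leq a^\delta+b^\delta+c^\delta$ for $\delta\in(0,1]$), average over $z\in B$, and take $1/\delta$-th roots. The first term only depends on $x$ through a supremum over $z\in B$; I would estimate
$$
  \sup_{z\in B}|Tf_2(x)-Tf_2(z)|\leq \int_{|x-y|>\eps}\omega\Big(\frac{|x-z|}{|x-y|}\Big)\frac{|f(y)|}{|x-y|^d}\,dy
  \;\leq\; c_d\|\omega\|_{\Dini}\,Mf(x),
$$
by the same dyadic decomposition as in the proof of Lemma~\ref{lem:truncatedMO} (summing $\omega(2^{-k})$ and converting to the integral $\int_0^1\omega(t)\,dt/t$). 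The second term immediately yields $M_\delta(Tf)(x)$ by definition. For the third term I would invoke Kolmogorov's inequality for weak-type operators: if $T$ maps $L^1\to L^{1,\infty}$ with norm $N$, then for any ball $B$ and $\delta\in(0,1)$,
\begin{equation*}
  \Big(\fint_B |Tf_1|^\delta\Big)^{1/\delta}\leq c_\delta\,N\,\frac{\|f_1\|_{L^1}}{|B|}\leq c_{d,\delta}\,N\,Mf(x).
\end{equation*}
The case $\delta=1$ can either be reduced to some $\delta<1$ via $M_\delta\le M_{\delta'}$ for $\delta\le\delta'$, or handled by a direct strong/weak bound.

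Crucially, Theorem~\ref{thm:CZ} (which precedes this statement) supplies the quantitative bound $N=\Norm{T}{L^1\to L^{1,\infty}}\leq c_d(\Norm{T}{L^2\to L^2}+\|\omega\|_{\Dini})$ with exactly the right dependence on the Dini norm. Combining the three terms,
\begin{equation*}
  |T_\eps f(x)|\leq c_{d,\delta}\big(\Norm{T}{L^2\to L^2}+\|\omega\|_{\Dini}\big)Mf(x)+c_{d,\delta}\,M_\delta(Tf)(x),
\end{equation*}
and taking the supremum over $\eps>0$ gives the stated inequality.

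The only delicate points in this plan are (i) the dyadic estimate for $\sup_{z\in B}|Tf_2(x)-Tf_2(z)|$, where one must ensure the geometric cancellation so that the whole Dini integral $\int_0^1\omega(t)\,dt/t$ emerges (rather than, say, $\omega$ evaluated at a single point), and (ii) invoking the quantitative weak-$(1,1)$ bound from Theorem~\ref{thm:CZ} so that the final constant really is $\Norm{T}{L^2\to L^2}+\|\omega\|_{\Dini}$ and not something larger. I would expect (i) to be the main (but modest) obstacle; it is the same calculation that already appears in Lemma~\ref{lem:truncatedMO}, which suggests a direct reuse.
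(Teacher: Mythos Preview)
Your proposal is correct and follows essentially the same route as the paper's own sketch: the same decomposition $T_\eps f(x)=Tf_2(x)$ with $f_2=f\mathbf{1}_{B(x,\eps)^c}$, the same three-term split after introducing an auxiliary point $z\in B(x,\eps/2)$, the dyadic-annulus estimate yielding $c_d\|\omega\|_{\Dini}Mf(x)$ for the smoothness term, Kolmogorov's inequality for the local piece, and the final appeal to Theorem~\ref{thm:CZ} for the quantitative weak-$(1,1)$ bound. Your remark about the endpoint $\delta=1$ is a nice bit of care that the paper's sketch leaves implicit.
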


\begin{proof}[Sketch of proof]
Fix $x\in\R^d$ and $\eps>0$. For $x'\in B(x,\eps/2)$, we have
\begin{align*}
  T_\eps f(x)
  &=T(1_{B(x,\eps)^c}f)(x) \\
  &=[T(1_{B(x,\eps)^c}f)(x)-T(1_{B(x,\eps)^c}f)(x')]+Tf(x')-T(1_{B(x,\eps)}f)(x').
\end{align*}
Using smoothness of the kernel in the first variable and splitting into dyadic annuli, the first term can be dominated (pointwise in $x'$) by $c_d\|\omega\|_{\Dini}Mf(x)$. Then, we take the $L^\delta$ average over $x'\in B(x,\eps/2)$, namely
\begin{align*}
 |T_\eps f(x)|&\le c_{\delta}\bigg[c_d\big(C_K+\|\omega\|_{\Dini}\big)Mf(x)\\
 &\quad +\Big(\frac{1}{|B(x,\eps/2)|}\int_{B(x,\eps/2)}|Tf(x')|^\delta\,dx'\Big)^{1/\delta}\\
 &\quad +\Big(\frac{1}{|B(x,\eps/2)|}\int_{B(x,\eps/2)}|T(1_{B(x,\eps)}f)(x')|^\delta\,dx'\Big)^{1/\delta}\bigg].
\end{align*}
 The second term gives rise to $M_\delta(Tf)$ by definition. Finally, comparing the $L^\delta$ and $L^{1,\infty}$ norms on a bounded set via Kolmogorov's inequality, and using the boundedness of $T$ from $L^1$ to $L^{1,\infty}$, we obtain the estimate or the last term. Indeed
\begin{align*}
\Big(\frac{1}{|B(x,\eps/2)|} &\int_{B(x,\eps/2)}|T(1_{B(x,\eps)}f)(x')|^\delta\,dx'\Big)^{1/\delta}\le c_{\delta} \frac{\|T(1_{B(x,\eps)}f)\|_{L^{1,\infty}}}{|B(x,\eps/2)|}
\\
&\le c_{\delta}\Norm{T}{L^1\to L^{1,\infty}} \frac{\|1_{B(x,\eps)}f\|_{L^{1}}}{|B(x,\eps/2)|}
\le c_{d,\delta} \Norm{T}{L^1\to L^{1,\infty}} Mf(x).
\end{align*}
Finally, we use the bound for $\Norm{T}{L^1\to L^{1,\infty}}$ from Theorem~\ref{thm:CZ}.
\end{proof}

\begin{corollary}
\label{cor:quantTrunc}
Let $T$ be an $\omega$-Calder\'on--Zygmund operator whose modulus of continuity satisfies the Dini condition \eqref{eq:Dini}. Then
\begin{equation*}
  \Norm{T_{\sharp}}{L^1\to L^{1,\infty}}\le c_d(\Norm{T}{L^2\to L^2}+\|\omega\|_{\Dini}).
\end{equation*}
\end{corollary}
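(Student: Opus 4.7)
The plan is to feed the preceding Cotlar inequality into a standard weak-$(1,1)$ comparison, with constants tracked carefully. Fix once and for all some $\delta\in(0,1)$ (say $\delta=1/2$). Cotlar's inequality then gives pointwise
\begin{equation*}
  T_\sharp f(x)\leq c_d(\Norm{T}{L^2\to L^2}+\|\omega\|_{\Dini})Mf(x)+c_d M_\delta(Tf)(x),
\end{equation*}
so that after splitting the level set $\{T_\sharp f>\lambda\}$ into the two corresponding pieces, it suffices to bound $\Norm{Mf}{L^{1,\infty}}$ and $\Norm{M_\delta(Tf)}{L^{1,\infty}}$.

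The first of these is immediate from the weak $(1,1)$ bound for the Hardy--Littlewood maximal operator, giving $\Norm{Mf}{L^{1,\infty}}\leq c_d\Norm{f}{L^1}$. For the second, first invoke Theorem~\ref{thm:CZ} to obtain
\begin{equation*}
  \Norm{Tf}{L^{1,\infty}}\leq c_d(\Norm{T}{L^2\to L^2}+\|\omega\|_{\Dini})\Norm{f}{L^1}.
\end{equation*}
It then remains to show the auxiliary fact that for $\delta\in(0,1)$,
\begin{equation*}
  \Norm{M_\delta g}{L^{1,\infty}}\leq c_{d,\delta}\Norm{g}{L^{1,\infty}},
\end{equation*}
applied with $g=Tf$. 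Plugging this chain of inequalities together will deliver the stated bound.

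To prove the $L^{1,\infty}\to L^{1,\infty}$ boundedness of $M_\delta$, the approach is the usual Calder\'on--Zygmund truncation at height $\lambda$: write $g=g_1+g_2$ with $g_1=g\cdot 1_{|g|\leq\lambda}$ and $g_2=g\cdot 1_{|g|>\lambda}$. Since $|g_1|\leq\lambda$ pointwise we immediately have $M_\delta g_1\leq\lambda$, so $\{M_\delta g>2\lambda\}\subseteq\{M_\delta g_2>\lambda\}$. Rewriting the latter as $\{M(|g_2|^\delta)>\lambda^\delta\}$ and applying the weak-$(1,1)$ bound for $M$ reduces the problem to estimating $\Norm{|g_2|^\delta}{L^1}$. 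By the layer-cake identity and the weak-$(1,1)$ control of $g$,
\begin{equation*}
  \int |g_2|^\delta = \delta\lambda^\delta|\{|g|>\lambda\}|+\int_\lambda^\infty\delta s^{\delta-1}|\{|g|>s\}|\,ds\leq \frac{1}{1-\delta}\lambda^{\delta-1}\Norm{g}{L^{1,\infty}},
\end{equation*}
which, after dividing by $\lambda^\delta$, yields exactly the required weak-$(1,1)$ bound for $M_\delta g$ at level $2\lambda$.

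The only real obstacle is this last auxiliary step; once it is in place, combining Cotlar's inequality with the weak $(1,1)$ bounds for $M$ and $T$ is a direct assembly, and absorbing the $\delta$-dependent constants into $c_d$ (by fixing $\delta=1/2$) yields the corollary.
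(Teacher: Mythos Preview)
Your approach is exactly the one the paper sketches: fix $\delta\in(0,1)$, apply Cotlar's inequality, and combine the weak-$(1,1)$ bounds for $M$, for $T$ (from Theorem~\ref{thm:CZ}), and the $L^{1,\infty}\to L^{1,\infty}$ boundedness of $M_\delta$; you simply supply the details for the last ingredient where the paper just cites it. Two cosmetic slips to clean up: the inclusion $\{M_\delta g>2\lambda\}\subseteq\{M_\delta g_2>\lambda\}$ needs a constant adjustment since $M_\delta$ is only quasi-subadditive (one has $M_\delta g\le 2^{1/\delta-1}(M_\delta g_1+M_\delta g_2)$), and in the layer-cake identity the first term is $\lambda^\delta|\{|g|>\lambda\}|$ rather than $\delta\lambda^\delta|\{|g|>\lambda\}|$; neither affects the argument.
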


\begin{proof}[Sketch of proof]
We fix some $\delta\in(0,1)$, combine the bounds from the previous two theorems, and use the boundedness of $T:L^1\to L^{1,\infty}$, $M:L^1\to L^{1,\infty}$ and $M_\delta:L^{1,\infty}\to L^{1,\infty}$.
\end{proof}

\section*{Acknowledgments.}
We would like to thank Kangwei Li for helpful comments on the presentation.
The second author wishes to thank the Department of Mathematics and Statistics of the University of Helsinki, and specially the Harmonic Analysis Research Group, for the warm hospitality shown during her visit during Winter-Spring of 2015.


\end{document}